\documentclass[11pt]{amsart}
\usepackage[dvipsnames]{xcolor}
\usepackage{a4wide, amsmath, amsthm, amscd, amsfonts, amssymb, caption, enumerate, epsfig, float, graphics, graphicx, hyperref, mathrsfs, subcaption, textcomp, tikz, tikz-cd, todonotes, verbatim, xcolor, ytableau,pinlabel,subcaption}
\usepackage[T1]{fontenc}

\usepackage{subfiles}

\newtheorem{theorem}{Theorem}[section]
\newtheorem{proposition}[theorem]{Proposition}
\newtheorem{conjecture}[theorem]{Conjecture}
\newtheorem{corollary}[theorem]{Corollary}
\newtheorem{lemma}[theorem]{Lemma}
\newtheorem{question}[theorem]{Question}
\newtheorem{definition}[theorem]{Definition}

\theoremstyle{definition}

\newtheorem{remark}[theorem]{Remark}
\newtheorem{example}[theorem]{Example}

\definecolor{darkgreen}{RGB}{0,153,0}
\definecolor{darkred}{RGB}{204,0,0}
\definecolor{darkblue}{RGB}{0,51,204}
\definecolor{red}{RGB}{242,43,29}

\hypersetup{
  colorlinks,
  citecolor=Purple,
  linkcolor=Blue,
  urlcolor=Green}
    
\newcommand{\Z}{\mathbb{Z}}

\newcommand{\R}{\mathbb{R}}
\newcommand{\C}{\mathbb{C}} 
\newcommand{\F}{\mathbb{F}} 
\newcommand{\A}{\mathcal{A}}

\DeclareMathOperator{\Aug}{Aug}

\newcommand{\e}{\varepsilon}
\newcommand{\La}{\Lambda}

\newcommand{\sse}{\subseteq}
\newcommand{\dd}{\partial}

\begin{document}

\title{On Ruling polynomials of Legendrian links}
\author[O. Capovilla-Searle]{Orsola Capovilla-Searle}
\address{} \email{capovilo@oregonstate.edu}
\author[Y. Pan]{Yu Pan}
\address{}
\email{ypan@tju.edu.cn}

\begin{abstract}
The ruling polynomial is a Legendrian invariant that is closely related to the augmentation variety of the Legendrian.
We characterize all graded and ungraded ruling polynomials, and construct Legendrian links realizing each possible polynomial. The graded augmentation varieties of the Legendrians we construct all have trivial cluster algebra structures. Finally, we construct Legendrian knots admitting $k$ exact Lagrangian fillings with $\chi(L)=n$
that are pairwise smoothly non-isotopic for $n\leq 1$, and $k\geq 0.$
\end{abstract}

\maketitle

\section{Introduction}

\subsection{Summary}
We characterize the polynomials that can be realized as the graded and ungraded ruling polynomial of a Legendrian in $\R^3_{std}$ and construct explicit Legendrian links for each possible ruling polynomial. The family of Legendrian links we construct satisfies additional properties of interest: their augmentation varieties have trivial cluster algebra structures, and they admit families of exact Lagrangian fillings that are pairwise homeomorphic and smoothly non-isotopic. As a consequence there are Legendrians with the same graded ruling polynomial and distinct graded augmentation varieties.  We caution the readers that the augmentation variety in this paper is the quotient of the naive augmentation variety by dg-homotopy. It is not in general an algebraic variety but for all our examples it is.

We develop a tool which may be of independent interest that relies on a gauge-theoretic obstruction to detect when two pairs of smooth immersed ribbon fillings with transverse double points are not smoothly isotopic rel boundary. We construct Legendrian knots with $k$ embedded exact Lagrangian fillings with $\chi(L)=n$ that are pairwise smoothly non-isotopic for $n\leq 1$, $k\geq 0$. We also construct transverse knots with $k$ embedded orientable symplectic fillings with genus $g$ that are pairwise smoothly non-isotopic for $k,g\geq 0.$

\subsection{Context}
Legendrian links in $\R^3$ with the standard contact structure are central objects of study in contact topology~\cite{Arnold_1990, Geiges_2008,EN_survey}. In addition to the classical Legendrian invariants (the Thurston Bennequin number and rotation number), there are Legendrian invariants arising from Floer theory, generating families, and microlocal sheaf theory~\cite{Chekanov, SFT, Traynor_2001, Fuchs_Rutherford_2011,Shende_Treumann_Zaslow_2016}. We focus on graded normal rulings, defined independently by~\cite{Fuchs03,ChekanovPushkar}. A graded normal ruling of a Legendrian $\Lambda\subset \R^3_{std}$ is a decomposition of its Legendrian front $D\subset \R^2$ into a collection of embedded disks subject to certain conditions (see Definition~\ref{defn:ruling}). The graded ruling polynomial, $R_{\La}(z)$, of a Legendrian $\La$ with a fixed front $D$ encodes counts of graded normal rulings of the front $D$:
$$R_{\La}(z)= z^{|\pi_0 (\La)|} \sum_{\mbox{ruling }\rho} z^{-\chi(\rho)},$$
where $|\pi_0 (\La)|$ is the number of connected components of $\La$ and $\chi(\rho)$ is the Euler characteristic of the ruling $\rho$. 

The graded ruling polynomial is related to  smooth knot invariants such as the HOMFLY-PT polynomial~\cite{Ru}, and Legendrian knot invariants such as the Chekanov-Eliashberg differential graded algebra, also known as the Legendrian contact dg-algebra. Defined independently by Chekanov~\cite{Chekanov} and Eliashberg-Givental-Hofer~\cite{SFT} as part of Symplectic Field Theory, the Legendrian contact dg-algebra is a Legendrian invariant up to stable tame isomorphism. Graded normal rulings are in a non-bijective correspondence with the set of augmentations of the Legendrian contact dg-algebra~\cite{Fuchs03, FI,Sab}. 
 We define the augmentation variety to be the quotient of the naive augmentation variety (defined over \(\mathbb{C}\)) modulo dg-homotopy equivalence. This quotient space is generally not an algebraic variety, but we keep the convention to match the current literature despite this clear shortcoming. Every Legendrian we study in this paper has an augmentation variety that is an honest algebraic variety. See Remark~\ref{rem:aug_stack} for more details.
The augmentation variety is an important tool in the study of exact Lagrangian fillings of Legendrian links in $(\mathbb{B}^4, \omega_{std})$, as embedded and immersed fillings induce augmentations~\cite{EHK,Pan_Rutherford_2021,Pan_Rutherford_2023}. The classification of exact Lagrangian fillings of Legendrian links in $(\mathbb{B}^4, \omega_{std})$ up to Hamiltonian (exact Lagrangian) isotopy fixing the boundary is an important problem in contact and symplectic geometry~\cite{eliashberg_polterovich, Chantraine_2010, EHK, Pan_2017}. Connections between the moduli space of fillings and cluster algebras have generated exciting progress in both fields~\cite{STWZ,CG,Casals_weng,Gao2,CSHW,CGGILS}. Thanks to the correspondence between augmentations and graded normal rulings, the graded ruling polynomial of $\La$ encodes finite point counts of the naive augmentation variety~\cite{HR, NRSS}. There are also newfound relations between graded ruling polynomials and BPS invariants arising in the enumerative geometry of planar curve singularities~\cite{SuXiYu}.

The various connections between graded ruling polynomials and Legendrian invariants, as well as geometric objects such as exact Lagrangian fillings and augmentation varieties motivate the following geography question:

\begin{question}\label{ques:geography}
Which polynomials can be realized as the ruling polynomial of a Legendrian knot in $\R^3_{std}$?
\end{question}

We answer this question in Theorem~\ref{thm:main} for graded ruling polynomials and in Theorem~\ref{thm:non-orientable} for ungraded ruling polynomials, a generalization of graded ruling polynomials which can encode exact non-orientable fillings. 
Next, motivated by computational evidence of a relation between cluster structures on the augmentation variety $\A ug(\La)$ and the graded ruling polynomial $R_{\La}(z)$ (see section~\ref{sec:motivation}) we ask and answer the following question affirmatively: 

\begin{question}\label{ques:aug_geography}
Are there Legendrian links in $\R^3_{std}$ with the same ruling polynomial and  distinct augmentation varieties?
\end{question}

Finally, we ask and answer a geography question for exact Lagrangian fillings of Legendrian links. There exist Legendrian knots with an arbitrary number of topologically distinct fillings~\cite{Cao_Gallup_Hayden_Sabloff_2014}, as well as Legendrian knots with even numbers of smoothly non-isotopic exact Lagrangian disk fillings~\cite{Li_Tange_2020}. In another direction, there are Legendrians with infinitely many distinct exact Lagrangian fillings of genus $g>0$ that are all smoothly isotopic rel boundary~\cite{CG}.

\begin{question}\label{ques:fillings}
Let $n\leq 1$ and $k\geq 0$ be arbitrary integers. Are there Legendrian knots in $\R^3_{std}$ with at least $k$ smoothly non-isotopic exact Lagrangian fillings with $\chi(L)=n$?
\end{question}

\subsection{Realizing all possible graded and ungraded ruling polynomials}\label{sec:ruling}

For a graded ruling polynomial, the power of each term, $|\pi_0 (\La)| - \chi(\rho)$, is an even number. 
This is because the graded condition implies that each graded normal ruling $\rho$ can be viewed as an oriented filling of $\La$.
Hence we have  $|\pi_0(\La)| + \chi(\rho)=2|\pi_0( \rho)|-2 g_{\rho}$ where $g_\rho$ is the sum of the genus of all the connected components of $\rho$, it follows that $|\pi_0(\La)| - \chi(\rho)=2|\pi_0(\La)|-2|\pi_0( \rho)|-2 g_{\rho}\in 2\mathbb{Z}.$ Thus, any graded ruling polynomial must be a polynomial $P(z^2)$ with positive integer coefficients. We answer Question~\ref{ques:geography} for the graded case:

\begin{theorem}\label{thm:main}
    Any polynomial $P(z^2)=\displaystyle{\sum^n_{k=0} a_k z^{2k}}$ with non-negative integer coefficients can be realized as the graded ruling polynomial of a Legendrian link $\La_{P(z^2)}$ in $\R^3_{std}$. 
\end{theorem}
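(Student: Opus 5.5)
We build $\La_{P(z^2)}$ from simple pieces using two operations whose effect on the graded ruling polynomial we control: split disjoint union (a product formula) and a ``sum'' operation that adds polynomials.

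For a split union $\La_1\sqcup\La_2$, where the fronts lie in disjoint vertical strips, a graded normal ruling of the union is exactly a pair of rulings, one on each front. Both $\chi$ and $|\pi_0|$ are additive, so
$$R_{\La_1\sqcup\La_2}=R_{\La_1}R_{\La_2}.$$
The standard unknot $U$ with $tb=-1$ has $R_U=1$. A max-tb Legendrian with a single graded ruling, for example a suitable positive torus knot front or a Legendrian unlink, has $R=z^{2m}$.

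Addition is the real issue, since $\sum a_k z^{2k}$ requires sums. I plan a gadget: a front $D$ containing a distinguished crossing region where every graded normal ruling is forced into one of several mutually exclusive local configurations. Each configuration then propagates to a different portion of the front, contributing a prescribed monomial. Concretely, I take a chain of $n$ stacked unknot-like components, linked so that consecutive strands carry crossings of degree $0$. A ruling then chooses at which level $k$ it starts ``switching''. A switch at each degree-$0$ crossing lowers $\chi$ by one. The constraints, namely the normality conditions at switches and the pairing of cusps, should force each admissible ruling to switch on a consecutive block, producing a ruling of Euler characteristic $|\pi_0|-2k$. The result is $R=\sum_{k=0}^n z^{2k}$ for the building block $B_n$.

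To get arbitrary coefficients, I attach to the portion of $B_n$ responsible for the $z^{2k}$ term a ``multiplicity'' gadget: $a_k$ parallel alternatives, for instance $a_k$ distinct admissible positions for a switch pair, all of equal Euler characteristic. A coefficient $a_k=0$ is handled by blocking that level, for instance by inserting a crossing of nonzero degree so that no graded switch is allowed there.

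Alternatively, I would use induction via a connected-sum-like relation. I would look for a front operation, such as inserting a clasp between $\La$ and an unknot, under which $R$ transforms as $R\mapsto R+z^2R'$, or $R\mapsto 1+z^2R$ (Horner form). Iterating $P=a_0+z^2(a_1+z^2(\cdots))$ would then realize $P$, with $a_k$ copies of a term produced by repeating the additive step $a_k$ times.

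The main obstacle is verifying that the gadget's rulings are exactly the intended ones. One must check the normality condition at every switch and that the gradings (Maslov potentials) of all chosen crossings are $0$. One must also make sure no unexpected rulings appear, such as rulings switching at crossings in other parts of the front or rulings pairing cusps across components. I expect to handle this with a careful left-to-right induction along the front, recording at each vertical slice the possible pairings of strands. Most of the crossings should be arranged to have odd or nonzero degree, so they are forced to be non-switches and the count stays a clean combinatorial enumeration.
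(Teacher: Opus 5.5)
Your product formula for split unions is correct, but it only multiplies ruling polynomials. The whole content of the theorem is the additive step, and there you give no construction. The block $B_n$ with $R=\sum_{k=0}^n z^{2k}$, the multiplicity gadget producing $a_k$ rulings of equal Euler characteristic, the blocking of a level, and the Horner-type move $R\mapsto 1+z^2R$ are each described only by the property you want them to have. The step you postpone as the ``main obstacle'' (exhibiting a front and enumerating all its graded normal rulings) is the proof itself.

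Nothing in the sketch makes these gadgets plausible. The most direct version of your chain is a Legendrian connected sum of $n$ copies of the Hopf link $H_0$. It has $R=(z^2+1)^n$, because ruling polynomials are multiplicative under connected sum, so the switches at the clasps are independent rather than confined to a consecutive block. Blocking a level with a crossing of nonzero degree is not a local operation either. Degrees of crossings within one component are fixed by the front, and shifting the potential of a component changes the degree of every crossing it has with other components.

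Your monomial examples are also wrong. A split max-tb unlink has $R=1$, not $z^{2m}$. Max-tb positive torus knots have several rulings; for instance, the max-tb right-handed trefoil has $R=z^2+2$. The paper uses the max-tb $7_4$, i.e.\ the checkerboard $\La^1$, to realize $z^2$.

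What is missing is an explicit, rigid mechanism that produces several rulings of the same Euler characteristic and mutually exclusive choices, together with a complete enumeration of rulings. The paper builds this in three steps:
\begin{enumerate}
\item It starts from $m(9_{46})$, which has exactly two graded rulings, both disk rulings (red and blue type). Stacking $k-1$ slightly modified copies gives $\La_k$. There a red layer permits red or blue in the layer below, but a blue layer forces blue below, so there are exactly $k$ rulings and $R_{\La_k}=k$.
\item Crossed connect sums of $\La_{k+1}$ with a Hopf link and $n$ copies of $\La_2$ propagate the top-layer choice. Exactly $k$ of the $k+1$ rulings then have Euler characteristic lowered by $2n$, giving $kz^{2n}+1$.
\item The XY-sum glues an $X$-type front with polynomial $p+1$ to a $Y$-type front with polynomial $q+1$. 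The non-constant contributions of the two sides cannot occur in the same ruling, so the result is $p+q+1$.
\end{enumerate}
Iterating the XY-sum realizes every $P$ with nonzero constant term, and disjoint union with copies of $\La_{z^2}$ handles the rest. Until you supply concrete fronts with verified ruling counts playing these roles, your proposal is a strategy outline rather than a proof.
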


We prove Theorem~\ref{thm:main} by explicitly constructing a Legendrian $\La_{P(z^2)}$ with graded ruling polynomial $P(z^2)$.  The Legendrians $\La_{P(z^2)}$ all have the following property:

\begin{theorem}\label{thm:aug}
Let $P(z^2)=\displaystyle{\sum^n_{k=0} a_k z^{2k}}$ be a polynomial with non-negative coefficients. The augmentation variety of the Legendrian link $\La_{P(z^2)}$ is a disjoint union of $a_k$ copies of $(\C^*)^{2k}$ over all values of $k$.
\end{theorem}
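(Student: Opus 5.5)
We have not yet seen the construction of $\La_{P(z^2)}$, so the plan below is built around the natural construction: a disjoint union of building blocks, one for each monomial. Concretely, for each $k$ take $a_k$ copies of a Legendrian $\La_k$ whose graded ruling polynomial is $z^{2k}$, placed far apart in the front. A natural candidate for $\La_k$ is a Legendrian link admitting a unique graded normal ruling of the appropriate Euler characteristic, for example a suitable stack or Hopf-type arrangement of unknots.

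The monomial $z^{2k}$ has coefficient $1$, so the disjoint union must contribute additively. I would realize this by building $\La_{P(z^2)}$ so that its components fall into $\sum_k a_k$ \emph{alternative} pieces whose rulings are mutually exclusive. For instance, each copy of a block can be attached to a common base component via crossings whose switch pattern forces exactly one block to be "active." The argument then runs in two steps.

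\textbf{Step 1: the dg-algebra.} Compute the Chekanov--Eliashberg dg-algebra of $\La_{P(z^2)}$ from its front, after resolving to the Lagrangian projection via Ng's resolution. With $\C[t_i^{\pm1}]$ coefficients, one base point per component, I would identify:
\begin{itemize}
\item the degree-$0$ generators;
\item the degree-$1$ generators and their differentials.
\end{itemize}
The expectation is that the augmentation equations, $\e\circ\dd=0$ in degree $1$, split the naive augmentation variety into pieces indexed by the rulings. Each piece should have the form $(\C^*)^{2k}\times\C^{m}$. The $\C^*$ factors come from the $t_i$ and from switched crossings, which must be invertible. The $\C^m$ factors come from degree-$0$ crossings whose values are free.

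\textbf{Step 2: the dg-homotopy quotient.} Using the correspondence between augmentations and rulings (Fuchs, Sabloff, Henry--Rutherford), each connected component of the naive variety corresponds to a unique graded normal ruling. Since $R_{\La_{P}}=P(z^2)$ by Theorem~\ref{thm:main}, there are exactly $a_k$ rulings of each type. I would then show that dg-homotopy acts on each piece by translating the $\C^m$ coordinates. Homotopies are generated by degree-$(-1)$ maps, or equivalently by degree-$1$ Reeb chords paired with degree-$0$ ones. This action is free and transitive on the $\C^m$ factor, so the quotient of each piece is exactly $(\C^*)^{2k}$. Since different rulings give non-homotopic augmentations, the pieces stay disjoint, which yields the stated union.

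\textbf{Main obstacle.} The hardest step is controlling the homotopy quotient: showing that the action kills precisely the $\C^m$ factors and never identifies distinct $\C^*$ parameters. For this I would compute the linearized complex at each augmentation and check that $\dim H^0$ equals $2k$. I would also confirm that the torus coordinates are homotopy invariants, for example as the holonomy or "monodromy" data determined by the base points and the switched crossings. In the graded setting each switch is a degree-$0$ crossing, so the $2k$ count should match $|\pi_0(\La)|-\chi(\rho)$. Matching this count is a consistency check on the construction.
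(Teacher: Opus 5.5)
There is a genuine gap in your Step 2, which relies on a principle that is false in general. You assert that each connected component of the naive augmentation variety corresponds to a single graded normal ruling, and that augmentations from different rulings stay in disjoint pieces after quotienting by dg-homotopy. The Henry--Rutherford decomposition indexed by rulings is only a stratification (it yields point counts over finite fields), not a decomposition into open-and-closed pieces. Strata attached to different rulings can lie in each other's closures.

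The paper's own example shows this. The max-tb trefoil has $R(z)=z^2+2$, hence three rulings, yet its augmentation variety is the connected surface $\{x+z+xyz=1\}$. Your argument would instead output $(\C^*)^2\sqcup pt\sqcup pt$, and this discrepancy is exactly Corollary~\ref{cor:diff_aug}. So knowing $R_{\La_{P(z^2)}}=P(z^2)$ gives you neither disjointness nor that each piece is a torus; that is the entire front-specific content of the theorem.

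Your proposed checks do not close this. The dimension of $H^0$ of the linearized complex is tangent-level information; it cannot detect gluing of strata or distinguish $(\C^*)^2$ from another smooth surface such as the trefoil's. The claim that homotopies act freely and transitively on a $\C^m$ factor of each stratum is asserted, not derived.

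A second issue is that the theorem concerns the specific link $\La_{P(z^2)}=\La_{a_0}\natural\La_{a_1z^{2n_1}+1}\natural\cdots\natural\La_{a_Nz^{2n_N}+1}$, while your construction is left unspecified. Disjoint unions multiply both ruling polynomials and augmentation varieties, $\A ug(\La\sqcup\La')\cong\A ug(\La)\times\A ug(\La')$. The paper uses them only to factor out powers of $z^2$ via $\La_{z^2}$, whose augmentation variety is $(\C^*)^2$.

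The paper obtains the disconnection directly from the differential. In each $m(9_{46})$ layer, $\dd_0 d_1=b_1b_4$ forces $\e(b_1)\e(b_4)=0$, while $\dd_0 a_2=1+b_1b_6+b_3b_4$ forbids $\e(b_1)=\e(b_4)=0$. So the closed conditions $\e(b_1)=0$ and $\e(b_4)=0$ split the variety into disjoint closed pieces. This mechanism is propagated through the following steps:
\begin{enumerate}
\item Lemma~\ref{lem:stable}, which gives three pieces after stabilization.
\item The explicit computation of Proposition~\ref{prop:kz2n+1}, where the torus coordinates are $s^i_1=\e(B^i_{11})$ and $s^i_2=\e(B^i_{22})$, and the remaining chords are normalized by explicit homotopies.
\item The splitting along $\e(y_1)=0$ versus $\e(y_2)=0$ in Proposition~\ref{prop:doublesharp}.
\item The inductive formula $\A ug(\La_X\natural\La_Y)\cong\A ug'(\La_X)\sqcup\A ug'(\La_Y)\sqcup pt$ of Proposition~\ref{prop:aug_natural}.
\end{enumerate}
Any correct proof must supply such a disconnecting mechanism for a specific front, followed by an explicit identification of each piece modulo homotopy.
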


 Note that the augmentation variety of $\La_{P(z^2)}$ is an honest variety.

\begin{remark}
Theorem~\ref{thm:main} and Theorem~\ref{thm:aug} hold for Legendrian knots. There exists a sequence of SZ-pinch moves (see Section~\ref{sec:SZ}) that relate $\La_{P(z^2)}$ to a Legendrian knot with the same graded ruling polynomial and an isomorphic augmentation variety.
\end{remark}

As a corollary of Theorem~\ref{thm:aug} we answer Question~\ref{ques:aug_geography} affirmatively.
\begin{corollary}\label{cor:diff_aug}
    There exist Legendrian links with the same graded ruling polynomial but non-homeomorphic augmentation varieties.
\end{corollary}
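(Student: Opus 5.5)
Corollary~\ref{cor:diff_aug} follows by combining Theorem~\ref{thm:main} and Theorem~\ref{thm:aug} with a second, independently known Legendrian that has the same graded ruling polynomial. The cleanest choice is to compare $\La_{P(z^2)}$ with a Legendrian whose augmentation variety is already understood and differs topologically. The idea is that the ruling polynomial only records point counts over finite fields, and these are blind to the way the pieces are assembled.

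\textbf{Choosing the two Legendrians.} Take the Legendrian trefoil (max-tb right-handed trefoil) $\La_T$. Its graded ruling polynomial is $R_{\La_T}(z)=z^2+2$, obtained by counting its normal rulings as usual. By Theorem~\ref{thm:main}, the constructed link $\La_{z^2+2}$ has the same graded ruling polynomial. By Theorem~\ref{thm:aug}, its augmentation variety is the disjoint union of two points and one copy of $(\C^*)^2$. In particular it is disconnected, with three connected components.

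\textbf{Computing the comparison variety.} For $\La_T$, the augmentation variety (modulo dg-homotopy) is known from the cluster picture. With the base point/marked point conventions used here, it is a two-dimensional cluster variety of type $A_2$, which is connected. Concretely, it is the complement in $\C^2$ of a curve, or equivalently the variety cut out by the single degree-$0$ relation among the five Reeb chords of the standard front. To make the comparison self-contained, I would write down the Chekanov--Eliashberg differential of the degree-$1$ chords. I would then solve $\e\circ\dd=0$ over $\C$ and check that the solution set is irreducible of dimension $2$. Its point count $q^2+q+1$-type behaviour matches $z^2+2$ under the Henry--Rutherford correspondence, as a sanity check.

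\textbf{Conclusion and the main obstacle.} A connected space is not homeomorphic to a space with three components, so the two varieties are non-homeomorphic even though the two ruling polynomials agree. The step I expect to be most delicate is fixing conventions: the number of base points and the quotient by dg-homotopy. These must be chosen compatibly with Remark~\ref{rem:aug_stack}, so that the trefoil's variety is honestly computed in the same category as in Theorem~\ref{thm:aug}.

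If that computation is awkward, there is a fallback that stays inside the paper's own family. Compare $\La_{z^2+2}$ with a disjoint union or connect sum realizing $z^2+2$ whose augmentation variety is a product. The invariant to use for the comparison is the number of connected components, or the top-dimensional Betti number of $(\C^*)^{2k}$ pieces.
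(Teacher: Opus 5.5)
Your proposal is correct and follows the paper's argument. The paper takes the same pair: the max-tb (right-handed) trefoil, with graded ruling polynomial $z^2+2$ and augmentation variety the irreducible, hence connected, $A_2$ braid variety $\{x+z+xyz=1\}\subset\C^3$, against $\La_{z^2+2}$, whose variety by Theorem~\ref{thm:aug} is $(\C^*)^2$ plus two isolated points; connectedness then distinguishes them.

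Two side remarks in your proposal are inaccurate, though neither affects the logic. That surface is not merely a curve complement in $\C^2$: it is the graph of $z=(1-x)/(1+xy)$ over $\C^2\setminus\{xy=-1\}$ together with the line $\{(1,-1)\}\times\C$. It therefore has $q^2+1=q\,(q+q^{-1})$ points over $\F_q$, not $q^2+q+1$, and it is $q^2+1$ that matches $z^2+2$ at $z=q^{1/2}-q^{-1/2}$ up to a power of $q$.
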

In particular, consider the pair of the max-tb trefoil $\La_{3_1}$ and $\La_{z^2+2}$. The max-tb trefoil $\La_{3_1}$ has graded ruling polynomial equal to $z^2+2$, and its augmentation variety is the braid variety $\{(x,y,z)\in \C^3~|~x+z+xyz=1\}$ which has an $A_2$-cluster structure. The augmentation variety of $\La_{z^2+2}$ is homeomorphic to the disjoint union of $(\C^*)^2$ and two isolated points. Any variety given by $\sqcup_i^n(\C^*)^{k_i}$ has a trivial cluster algebra. See Section~\ref{sec:motivation} for additional context on Question~\ref{ques:aug_geography}.

\begin{remark}
Question~\ref{ques:aug_geography} can be posed for the moduli of pseudo-perfect objects in the smooth dg-category of sheaves
with singular support on a Legendrian link. However, the sheaf-theoretic computations for our family of Legendrians are much more complicated since they do not admit binary Maslov potentials.
\end{remark}

The ungraded polynomial counts ``ungraded" rulings (see Definitions~\ref{defn:ruling} and~\ref{defn:ruling_polynomial}).
In contrast to the case of graded ruling polynomials, there are restrictions on which polynomials are ungraded ruling polynomials arising from the fact that the Euler characteristic of any two immersed exact Lagrangian fillings of a Legendrian must be equal mod 2, see Lemma~\ref{prop:parity}.

\begin{theorem}\label{thm:non-orientable}
Only the polynomials of the form $P(z^2)$ or $zP(z^2)$ can be realized as the ungraded ruling polynomial of a Legendrian link in $\R^3_{std}$.
\end{theorem}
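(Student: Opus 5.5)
The statement has two parts: a restriction (only $P(z^2)$ or $zP(z^2)$ occur) and, implicitly, realizability (every such polynomial with non-negative integer coefficients occurs). I would treat them separately.

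\emph{The restriction.} Recall that the ungraded ruling polynomial is $R^1_{\La}(z)=z^{|\pi_0(\La)|}\sum_{\rho} z^{-\chi(\rho)}$, the sum running over all ungraded normal rulings of a fixed front. Each ungraded normal ruling $\rho$ determines an immersed (possibly non-orientable) exact Lagrangian filling of $\La$ with Euler characteristic $\chi(\rho)$. This is the same construction used in the graded case: glue the ruling disks along the switches with twisted bands, one band per switch. Lemma~\ref{prop:parity} then says that any two such fillings have Euler characteristics of the same parity. Hence all exponents $|\pi_0(\La)|-\chi(\rho)$ in $R^1_{\La}$ share a parity. If they are all even, the polynomial is $P(z^2)$; if all odd, it is $zP(z^2)$.

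One could also check the parity combinatorially, with no fillings at all. We have $\chi(\rho)=\#\text{disks}-\#\text{switches}$, so it suffices that this quantity mod $2$ is independent of $\rho$. Every switch changes the pairing at one crossing, and parity-counting the crossings and cusps gives $\#\text{disks}=\#\text{right cusps}/2$. So $\chi(\rho)\equiv c/2 - s(\rho)$, where $c$ is the number of cusps and $s(\rho)$ the number of switches. The one claim still needed is that $s(\rho)\bmod 2$ is an invariant of the front. I would derive it from the fact that the permutation of strands induced by the ruling, composed across all crossings, is constant; equivalently, $s(\rho)\equiv$ (number of crossings) $-$ (number of departures/returns) mod $2$, and I would justify this local count from the pairing bookkeeping. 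I expect this to be the main subtlety if one avoids Lemma~\ref{prop:parity}. Assuming that lemma, the restriction is immediate.

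\emph{Realizability.} For $P(z^2)$ I would reuse the construction behind Theorem~\ref{thm:main}. The link $\La_{P(z^2)}$ has graded ruling polynomial $P(z^2)$. I would then verify that its front admits no additional ungraded rulings, i.e.\ that every crossing eligible for an ungraded switch already has even degree; alternatively, I would choose the building blocks so that their crossings all have even degree. Given that, the ungraded and graded ruling polynomials coincide.

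For $zP(z^2)$, I would take a disjoint union (far apart) with a component whose ungraded ruling polynomial is a single odd monomial. Ungraded ruling polynomials multiply under split union, since rulings of a split front are pairs of rulings and the normalizing factor $z^{|\pi_0|}$ is compatible. The simplest candidate is a Legendrian unknot with $\mathrm{tb}$ one less than maximal and a zigzag arrangement admitting one ruling. Multiplying by it gives $z\cdot$ (the polynomial) up to adjusting $a_k$; I would check the normalization carefully against Definition~\ref{defn:ruling_polynomial}. The main obstacle in this half is finding such a single-odd-monomial building block and confirming it contributes exactly a factor of $z$.
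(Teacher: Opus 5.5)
Your restriction half is the paper's argument. An ungraded ruling $\rho$ yields an immersed exact Lagrangian filling with Euler characteristic $\chi(\rho)$, and the lemma that such fillings have Euler characteristics of equal parity forces all exponents $|\pi_0(\La)|-\chi(\rho)$ to share a parity.

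However, your justification for the first step (gluing ruling disks with twisted bands) only produces a smooth surface. For smooth surfaces there is no parity constraint at all: boundary-summing an unknotted M\"obius band changes $\chi$ by one. The Lagrangian condition is what makes $\mathrm{tb}=-\chi-e$ with $e$ even available, so you must invoke Pan--Rutherford~\cite[Theorem 1.2]{Pan_Rutherford_2023}, as the paper does.

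Your combinatorial alternative, on the other hand, can be completed, and it gives a proof that avoids fillings entirely:
\begin{enumerate}
\item At generic $x$ the ruling is a fixed-point-free pairing of the strands. Let $I(x)$ be the number of pairs of ruling disks whose endpoints interlace.
\item Cusps do not change $I$, since the disk born or dying sits on adjacent strands.
\item A switch does not change the pairing of positions, so it does not change $I$.
\item A non-switched crossing of strands $k,k{+}1$ conjugates the pairing by $(k\ k{+}1)$. This toggles the interlacing of exactly the two disks involved and leaves every other pair unchanged, so $I$ changes by $\pm 1$.
\item Since $I=0$ at both ends, the number of non-switched crossings is even.
\end{enumerate}
Hence $s(\rho)\equiv\#\{\text{crossings}\}$ and $\chi(\rho)\equiv\#\{\text{right cusps}\}-\#\{\text{crossings}\}\pmod 2$ for every $\rho$. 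Note that $d(\rho)$ equals the number of right cusps, not half of it.

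The genuine gap is the realization of $zP(z^2)$. A Legendrian unknot with $\mathrm{tb}=-2$ is a stabilization, and a front containing a zigzag admits no normal ruling of any kind. Just after the zigzag's left cusp, the middle strand is paired with the strand sharing that cusp. Just before the right cusp, it must be paired with the strand sharing the right cusp. There is no crossing in between where a switch could change its companion. So your candidate has $R^1=0$, and the split union has ungraded ruling polynomial $0$, not $zP(z^2)$.

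What you need is a Legendrian whose ungraded ruling polynomial is exactly $z$. The paper uses the max-tb $m(3_1)$ ($\mathrm{tb}=-6$), which has a single ungraded ruling, of Euler characteristic $0$. Multiplicativity under split union then gives $zP(z^2)$ with no adjustment of coefficients.

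In the $P(z^2)$ half, your gloss ``every switchable crossing has even degree'' is also the wrong condition. Graded rulings here may switch only at degree-$0$ crossings, so even degree would only identify the ungraded polynomial with the $2$-graded one. What must be checked, as the paper does by direct computation, is that no ungraded ruling of $\La_{P(z^2)}$ switches at a crossing of nonzero degree. Rebuilding the blocks instead would require redoing the graded count.
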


\noindent An immediate corollary of Theorem~\ref{thm:main} and Theorem~\ref{thm:non-orientable}, following from~\cite{Ru} is:

\begin{corollary}
Any polynomial $P(z^2)=\displaystyle{\sum^n_{k=0} a_k z^{2k}}$ with non-negative integer coefficients can be realized as the coefficient of the top $a$-degree of the HOMFLY-PT polynomial of a knot. Any polynomial $P(z^2)$ or $zP(z^2)$ can be realized as the coefficient of the top a-degree term of the Kauffman polynomial of a knot.
\end{corollary}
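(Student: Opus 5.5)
The statement is a corollary, and the plan is to translate Theorem~\ref{thm:main} and Theorem~\ref{thm:non-orientable} through Rutherford's identification of ruling polynomials with extremal coefficients of the knot polynomials~\cite{Ru}. Recall the result from~\cite{Ru}: for a Legendrian link $\La$ with smooth type $K$, the graded ruling polynomial $R_\La(z)$ equals the coefficient of $a^{tb(\La)+1}$ in the HOMFLY-PT polynomial $P_K(a,z)$. This holds whenever $R_\La \neq 0$, in which case $tb(\La)+1$ is the maximal possible $a$-degree by the Morton--Franks--Williams bound. Analogously, the ungraded ruling polynomial equals the coefficient of $a^{tb(\La)+1}$ in the Kauffman polynomial $F_K(a,z)$, up to the normalization convention used in~\cite{Ru}. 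A nonzero ruling polynomial forces $tb(\La)$ to be maximal, so the coefficient in question is exactly the top $a$-degree coefficient.

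For the HOMFLY-PT statement, I take $P(z^2)$ with non-negative integer coefficients and apply Theorem~\ref{thm:main}. This gives a Legendrian $\La_{P(z^2)}$, which by the remark following Theorem~\ref{thm:aug} may be taken to be a knot with the same graded ruling polynomial. If $P\neq 0$, the knot $\La_{P(z^2)}$ admits a graded normal ruling, so its top $a$-degree HOMFLY-PT coefficient is $P(z^2)$. The case $P=0$ needs separate treatment, because the realizing Legendrian then has no ruling and the top $a$-degree coefficient is nonzero by definition. I will either exclude it by reading "polynomial" as nonzero, or note that it is vacuous in the sense intended.

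For the Kauffman statement, a graded ruling is in particular an ungraded one, so Theorem~\ref{thm:main} alone does not determine the ungraded count. I see two routes. The first is to show that the construction of $\La_{P(z^2)}$ can be modified so that all ungraded rulings are graded, for instance by using a front with Maslov potential in which every crossing has even degree. Then the ungraded polynomial equals $P(z^2)$, and Rutherford's Kauffman identification gives the claim for polynomials $P(z^2)$. The second route handles $zP(z^2)$. Here I would realize the polynomial as the ungraded ruling polynomial of a link with an odd number of components, or of a knot after a stabilization-free modification that adds a single non-orientable band. Multiplying by $z$ then corresponds to adding one component, since disjoint union multiplies ruling polynomials, or equivalently to shifting Euler characteristic parity. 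Theorem~\ref{thm:non-orientable} shows these two shapes are the only possibilities.

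The main obstacle is this Kauffman part: showing that the constructed fronts have ungraded ruling polynomial exactly $P(z^2)$ or $zP(z^2)$, and passing from links back to knots. Theorem~\ref{thm:non-orientable} as stated only restricts which polynomials occur, and does not by itself provide realizations. I would first try to check that the fronts in the construction of $\La_{P(z^2)}$ carry only even-degree crossings, so that graded and ungraded rulings coincide.
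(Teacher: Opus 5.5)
Your HOMFLY-PT half is the paper's argument (Theorem~\ref{thm:main}, pass to a knot by SZ-pinch moves, apply \cite{Ru}), and your caveat about $P=0$ is fair. The gap is in the Kauffman half, where neither of your routes works as sketched.

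Checking that every crossing has even degree is hopeless here. The $m(9_{46})$ block inside every $\La_{P(z^2)}$ already contains the crossings $c_1,c_2$ of degree $-1$ (Example~\ref{ex:946}). Even if it held, it would only give ungraded $=$ $2$-graded, not ungraded $=$ $\Z$-graded, because graded rulings may switch only at degree-$0$ crossings. What is needed, and what the paper invokes in the proof of Theorem~\ref{thm:non-orientable}, is a direct check on these specific fronts that no ungraded normal ruling can switch at a crossing of nonzero degree. That gives $R^1_{\La_{P(z^2)}}=R_{\La_{P(z^2)}}=P(z^2)$.

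The same fact quietly underlies your HOMFLY-PT half too. \cite{Ru} identifies the top HOMFLY-PT coefficient with the $2$-graded ruling polynomial (switches at all even-degree crossings). Since $\Z$-graded $\subseteq$ $2$-graded $\subseteq$ ungraded rulings, equality of the outer two counts forces all three to agree. The link-to-knot step you worry about is handled by component-merging SZ-pinch moves, which preserve both $R$ and $R^1$ (Proposition~\ref{prop:szpinch}, Remark~\ref{rem:non-orientable_sz}).

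For $zP(z^2)$, the claim that multiplying by $z$ corresponds to adding a component is false. By Proposition~\ref{prop:basic}, adjoining a max-tb unknot multiplies $R^1$ by $1$, since $|\pi_0|$ and $\chi(\rho)$ both increase by one. Lemma~\ref{prop:filling} and the argument of Proposition~\ref{prop:parity} show that all exponents of $R^1_\La$ have the parity of $|\pi_0(\La)|+tb(\La)$, which the number of components does not control. For a knot, odd exponents force $tb$ even, hence odd rotation number, so no $\Z$-graded front can serve.

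The missing ingredient is a concrete factor with ungraded ruling polynomial $z$. The paper takes the max-tb $m(3_1)$, whose unique ungraded ruling has $\chi(\rho)=0$ (a M\"obius band). It forms $\La_{m(3_1)}\#\La_{P(z^2)}$ by joining the rightmost cusp of $\La_{P(z^2)}$ to the leftmost cusp of $\La_{m(3_1)}$. This multiplies ungraded ruling polynomials and yields a knot with $R^1=zP(z^2)$. These realizations are part of the paper's proof of Theorem~\ref{thm:non-orientable}, even though, as you note, its statement only records the restriction. The Kauffman claim then follows from \cite{Ru}.
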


\subsection{Smoothly non-isotopic fillings}

The majority of the exact Lagrangian fillings constructed to date are either smoothly isotopic relative to their boundary~\cite{CG,Gao2,Casals_weng,CN,CGGILS,Casals_zaslow} or fail to be homeomorphic~\cite{Cao_Gallup_Hayden_Sabloff_2014}. Exotic knotted surfaces in $\mathbb{B}^4$ with boundary a link $K\subset S^3$ are fillings of $K$ that are topologically isotopic relative to their boundary but not smoothly isotopic relative to their boundary. Obstructions for pairs of smooth fillings from being smoothly isotopic rel. boundary arise from gauge theory~\cite{Akbulut_1991,Hayden}, Heegard Floer homology~\cite{Juhasz_Zemke_2019,Juhasz_Miller_Zemke_2021} and more recently Khovanov homology~\cite{Hayden_Sundberg_2024,Hayden_Kim_Miller_Park_Sundberg_2025,Ian}. Li and Tange ~\cite{Li_Tange_2020} constructed examples of exact Lagrangian disks that are exotically knotted and relied on a gauge-theoretic obstruction of Lisca and Mati\'c~\cite{Lisca_Matic_1997}. We extend Li and Tange's approach to a broader class of fillings. We define two ribbon fillings $L_1, L_2$ to have $U$-compatible exteriors if they have exteriors $W_1, W_2$ with Kirby diagrams such that adding a particular set of $(-1)$-framed unknots to both $W_1$ and $W_2$ guarantees that $W_1$ admits a Weinstein structure and $W_2$ does not. See Definition~\ref{defn:compatible} for a precise definition of $U$-compatible exteriors.

\begin{theorem}\label{intro:prop:notsmooth} 
Let $L_1, L_2$ be two smooth immersed ribbon fillings of $\La$ with double points that have U-compatible exteriors. Then, $L_1$ and $L_2$ are smoothly non-isotopic relative to their boundary.
\end{theorem}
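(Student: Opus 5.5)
We argue by contradiction, following the strategy of Li and Tange~\cite{Li_Tange_2020}. Suppose $L_1$ and $L_2$ are smoothly isotopic relative to their boundary. We will upgrade this isotopy to a diffeomorphism between two $4$-manifolds, one Stein (Weinstein) and one not, and then use the Lisca--Mati\'c obstruction~\cite{Lisca_Matic_1997} to show that this configuration cannot occur.

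\textbf{Step 1: from an isotopy to a diffeomorphism of exteriors.} An ambient isotopy of $\mathbb{B}^4$ fixing $\partial \mathbb{B}^4$ pointwise and carrying $L_1$ to $L_2$ can be chosen to carry a tubular neighborhood of $L_1$ to one of $L_2$. At the transverse double points we use a standard local model: a neighborhood of two transverse disks in $\R^4$. Since the isotopy preserves transversality and the double points, it gives a diffeomorphism $\Phi\colon W_1\to W_2$ between the exteriors. We arrange that $\Phi$ is the identity near the part of $\partial W_i$ lying in $S^3\setminus \nu(\La)$. We also arrange that it respects the meridians and the framings of the boundary pieces of the normal bundles, including the tori or other pieces that come from the double points.

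\textbf{Step 2: attaching the $(-1)$-framed unknots.} By Definition~\ref{defn:compatible}, the $(-1)$-framed unknots are specified in Kirby diagrams of $W_1$ and $W_2$ in a way that corresponds under boundary data fixed by $\Phi$. Concretely, they sit in the boundary region where $\Phi$ is the identity, for instance as meridians of $\La$ or curves in $S^3$ away from the filling, with compatible framings. So $\Phi$ extends over the $2$-handles to a diffeomorphism $\widehat W_1\cong \widehat W_2$. This diffeomorphism also restricts to the identity on the new boundary $3$-manifold. By assumption, $\widehat W_1$ admits a Weinstein (hence Stein) structure and $\widehat W_2$ does not.

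\textbf{Step 3: the gauge-theoretic contradiction.} Let $\xi$ be the contact structure on $\partial \widehat W_1$ induced by the Stein structure. Since $\Phi$ is the identity on the boundary, $\widehat W_2$ has the same boundary with the same $\xi$. The Lisca--Mati\'c theorem says that if a manifold bounded by $(Y,\xi)$ is Stein with some Stein structure $J$, then for any self-diffeomorphism of it that is the identity on the boundary, the pulled-back canonical class equals $c_1(J)$. We transport the Stein structure of $\widehat W_1$ to $\widehat W_2$ via $\Phi$. Comparing it with the data that certifies $\widehat W_2$ is not Weinstein then contradicts Lisca--Mati\'c. That data is typically a characteristic class or rotation-number computation showing that no Stein structure on $\widehat W_2$ can have the required $c_1$, or that an attaching curve cannot be Legendrian-realized with the right Thurston--Bennequin framing. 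This last step depends on exactly how ``does not admit a Weinstein structure'' is phrased in Definition~\ref{defn:compatible}. If it is phrased as ``not Stein with $c_1$ restricted in a certain way'', the argument is the Lisca--Mati\'c comparison of first Chern classes. If it is phrased absolutely, the contradiction is immediate, because $\widehat W_2\cong \widehat W_1$ would inherit a Weinstein structure.

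\textbf{Main obstacle.} We expect the hardest part to be Step 1 with immersed fillings. We must check that a smooth isotopy rel boundary of immersed surfaces with transverse double points induces a diffeomorphism of the exteriors. This diffeomorphism has to match the Kirby-diagram descriptions of $W_1$ and $W_2$ used in Definition~\ref{defn:compatible}, including the handles introduced by resolving or removing the double points. We also need the extension to be compatible with the framings of the added $(-1)$-unknots. We would handle this by fixing standard neighborhoods of the double points and applying the isotopy extension theorem to the pair (surface, double-point set).
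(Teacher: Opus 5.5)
There is a genuine gap: your argument assumes as a hypothesis exactly what the paper has to prove.

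In Definition~\ref{defn:compatible}, $U$-compatibility is a purely combinatorial condition on comparable Kirby diagrams. Nothing about Weinstein structures is assumed. The sentence in the introduction is only an informal summary of what the conditions are designed to guarantee, and its labels are swapped: in the definition, the diagram $K^1$ satisfying condition (1) is the one that becomes non-Weinstein.

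Once you grant ``after attaching the handles one side is Weinstein and the other is not,'' the theorem is immediate, because admitting a Weinstein structure is a diffeomorphism invariant. For the same reason, your Step~3 discussion of the Lisca--Mati\'c $c_1$-rigidity theorem is neither needed nor the relevant input. Your Steps~1 and~2 match the paper: the isotopy gives a diffeomorphism of exteriors that is the identity on $S^3\setminus\nu(\La)$, and it extends over handles attached along the $U_i$, which lie in the region $T$ where the diagrams agree.

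What is missing are the two Kirby-calculus arguments that derive the dichotomy from conditions (1)--(3).

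\textbf{Non-Weinstein side.} You must use condition (1). The $(-1)$-framed $U_i$ has linking number $0$ with a single carved $1$-handle and can be placed next to a $0$-framed $2$-handle as in Figure~\ref{fig:local_1}. One then isotopes it off the $1$-handle, adds canceling twists to the $0$-framed handle, and handle-slides/blows down to isolate a $(-1)$-framed unknot. This is a smoothly embedded sphere of square $-1$. The consequence of Lisca--Mati\'c you need is that no Stein (Weinstein) domain contains such a sphere.

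\textbf{Weinstein side.} You must use conditions (2)--(3) together with the Eliashberg--Gompf handlebody characterization.
\begin{enumerate}
\item The $U_j$ form a linear chain, each passing once through two consecutive carved $1$-handles. Sliding $H^2_{i_1}$ over $H^2_{i_2}$ puts $H^2_{i_2}$ in canceling position with $U_1$.
\item Contracting the chain this way leaves a single carved $1$-handle, which can be made crossingless.
\item Every remaining $2$-handle must then be put in Legendrian standard form with framing $tb-1$.
\end{enumerate}
In step 3, the $0$-framed handles $N_j$ are an obstruction as they stand; for instance, a $0$-framed unknot would need $tb=1>\overline{tb}=-1$. This is why the paper attaches $U_n$ with framing $-2$ (a $tb=-1$ unknot) rather than $-1$ as you do. It then slides each $N_j$ repeatedly over $U_n$ to drive its framing down until it is Legendrian realizable.

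Without these two arguments, your proposal does not prove the stated theorem.
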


We apply Theorem~\ref{intro:prop:notsmooth} to various families of fillings of Legendrians, answer Question~\ref{ques:fillings} affirmatively, and show $\La_{P(z^2)}$ has smoothly non-isotopic fillings.

\begin{theorem}\label{thm:diff}
Let $n\leq 1$, and $k\geq 0$. There exist Legendrian knots $\La$ with $k$ embedded exact Lagrangian fillings $L$ with Euler characteristic $\chi(L)=n$ that are pairwise smoothly non-isotopic.
\end{theorem}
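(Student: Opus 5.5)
The plan is to build, for each $n\le 1$ and $k\ge 0$, one Legendrian knot $\La_{n,k}$ carrying $k$ explicit embedded exact Lagrangian fillings $L_1,\dots,L_k$ with $\chi(L_i)=n$. We then apply Theorem~\ref{intro:prop:notsmooth} to each pair $(L_i,L_j)$, $i\neq j$. Since an embedded filling has no double points, the theorem is used in its degenerate case. If the argument genuinely needs double points, we instead realize each $L_i$ as the resolution of an immersed ribbon filling and compare those.

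\textbf{Base case.} Start from the Li--Tange examples~\cite{Li_Tange_2020}. These are Legendrian knots with pairs of exact Lagrangian disks ($\chi=1$) whose exteriors differ as follows: after attaching $(-1)$-framed unknots, one exterior becomes Stein and the other has non-Stein behavior. The latter is detected by Lisca--Mati\'c adjunction-type bounds, or equivalently by a Seiberg--Witten basic class argument. This is exactly the $U$-compatible setup of Definition~\ref{defn:compatible}.

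\textbf{Increasing $k$.} Take a connected sum (Legendrian cusp connected sum) of several such building blocks $\La^{(1)}\#\cdots\#\La^{(m)}$, each with its two disk fillings $D^{(j)}_{\pm}$. Boundary connect sums of the disks give $2^m$ fillings indexed by sign vectors. We should not try to distinguish all of them. Instead, choose the blocks to be distinct, for example with different Legendrian twisting parameters, so that their twisted regions give different $(-1)$-unknot sets. Then order $k$ of the fillings so that the $i$-th filling uses the ``Stein'' choice exactly on the $i$-th block.

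For each pair $i\neq j$, choose the set $U$ of $(-1)$-framed unknots attached in the $i$-th block's region. This makes $W_i\cup U$ Weinstein, because all blocks are Stein-compatible. We want $W_j\cup U$ to contain the obstruction. This is the point that needs care: the Lisca--Mati\'c obstruction has to survive boundary connected sum with the other pieces. I would argue this by showing that a Stein structure on $W_j\cup U$ would restrict, through the boundary sum decomposition (a $1$-handle join), to a Stein filling of the obstructed piece. That contradicts the block's obstruction.

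\textbf{Decreasing $\chi$ to $n$.} For $n<1$, modify each filling by exact Lagrangian cobordisms away from the block regions. Concretely, connect-sum with a fixed Legendrian, such as a max-tb positive torus knot or a chain of Legendrian unknots, together with a fixed orientable or decomposable filling of Euler characteristic $n-1$ attached by a saddle. Using the same fixed piece for every $L_i$ ensures that $\chi(L_i)=n$ for all $i$. The extra Weinstein handles it contributes to the exterior are common to all the $W_i$, so $U$-compatibility is preserved. For this we need the fixed piece to have Stein exterior (true for decomposable fillings) and the obstruction argument above to be insensitive to adding common Weinstein handles.

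\textbf{Expected obstacle.} The main difficulty is the verification of $U$-compatibility after the gluing, since the obstructions are not obviously local. My first approach is to realize all exteriors explicitly as Kirby diagrams. In those diagrams, the non-Weinstein property of $W_j\cup U$ is certified by an embedded sphere or torus whose self-intersection violates the adjunction inequality for Stein surfaces; this is Lisca--Mati\'c's criterion. Such a surface persists under adding handles away from it, which makes the obstruction robust.
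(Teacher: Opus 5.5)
Your obstruction side is essentially fine. An embedded $(-1)$-sphere in $W_{F_j}\cup U$ persists in any manifold containing it, so you do not need the detour through ``restricting a Stein structure across the boundary-sum decomposition.'' That decomposition is not accurate anyway: each Legendrian connect-sum saddle adds an extra $0$-framed $2$-handle to the exterior.

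The genuine gap is on the Weinstein side, which you assert rather than prove. With $U$ confined to the $i$-th block, every other block of $F_i$ still carries the disk $D_-$, with its carved $1$-handles and $0$-framed $2$-handles, plus the band $2$-handles. Nothing in your argument puts these into Gompf's standard form with framing $tb-1$. ``All blocks are Stein-compatible'' is not an argument. Neither is the unproved claim that exteriors of decomposable fillings are Stein, which your $\chi$-lowering step also relies on.

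For the same reason you cannot invoke Theorem~\ref{intro:prop:notsmooth}. Conditions (2)--(3) of Definition~\ref{defn:compatible} require the unknots $U_1,\dots,U_{n-1}$ to chain \emph{all} carved $1$-handles of the Weinstein-side exterior, and a further unknot $U_n$ to link the first of them. In the proof these get framings $-1$ and $-2$. That global structure is exactly what the proof uses:
\begin{itemize}
\item the chain lets you cancel all but one carved $1$-handle;
\item sliding each $0$-framed $N_j$ over $U_n$ lowers its framing until it can be realized as $tb-1$.
\end{itemize}
Unknots living in a single block do neither.

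To repair this, extend $U$ over every block and over the fixed summand, chaining the carved $1$-handles of $F_i$'s choices there. Place these unknots identically in both comparable diagrams, so the only asymmetry is the obstructing unknot in the block where $F_i$ and $F_j$ differ.

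The paper does exactly this, but inside the single Legendrian $\La_k$ of stacked $m(9_{46})$ layers rather than a connected sum of Li--Tange blocks. At each rightmost switch it takes $U_B$ or $U_R$ according to the ruling of the Weinstein-side filling. It adds one unknot around the top carved $1$-handle, and gets the obstructing unknot from the first layer where the two rulings differ (Proposition~\ref{prop:lan_not_smooth}). For $\chi<1$ it connect-sums with the checkerboard $\La^g$ and extends the system identically over its genus-$g$ filling.

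Your $\chi$ step also needs two corrections.
\begin{itemize}
\item A boundary sum with a disk gives $\chi(D\natural L')=\chi(L')$, so the fixed filling should have $\chi=n$, not $n-1$.
\item For even $n$, the fillings of a knot must be non-orientable, so you must name a summand carrying a non-orientable exact filling; the paper uses the M\"obius band of $\La_{m(3_1)}$. A max-tb positive torus knot cannot serve. Its $tb=pq-p-q$ is odd, and the relation $tb(\La)=-\chi(\tilde L)-e(\tilde L)$ with $e$ even (proof of Lemma~\ref{prop:filling}) then forces every filling of the connected sum to have odd Euler characteristic.
\end{itemize}
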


\begin{theorem}\label{thm:notsmooth}
Let $P(z^2)=\displaystyle{\sum^n_{k=0} a_k z^{2k}}$ be a polynomial with non-negative coefficients, and $\La_{P(z^2)}$ the Legendrian representative from Theorem~\ref{thm:main}. For each $0\leq k\leq n$ there exist $a_k$ immersed exact Lagrangian fillings $L_{j}$ of $\La_{P(z^2)}$ that are pairwise homeomorphic and not smoothly isotopic. Moreover, each $L_j$ induces a torus chart that covers one of the $a_k$ connected components $(\C^*)^{2k}$ of $\A ug(\La_{P(z^2)})$.
\end{theorem}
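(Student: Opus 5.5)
The idea is to build $\La_{P(z^2)}$ as a disjoint union, or a Legendrian connected sum, of blocks, one block for each coefficient. The block for the term $z^{2k}$ should have exactly $a_k$ immersed fillings, one per normal ruling of Euler characteristic matching $z^{2k}$. Concretely, I would first fix the front of $\La_{P(z^2)}$ from the proof of Theorem~\ref{thm:main}. Its graded normal rulings are in explicit bijection with the monomials counted by $P(z^2)$, so for each $k$ there are $a_k$ rulings $\rho_1,\dots,\rho_{a_k}$ with $|\pi_0(\La)|-\chi(\rho_j)=2k$.

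From each ruling $\rho_j$ I would build an immersed exact Lagrangian filling $L_j$. I would use the standard procedure of \cite{Pan_Rutherford_2021,Pan_Rutherford_2023}. Switches of the ruling are resolved by pinch moves, giving saddle cobordisms. Departure and return crossings that are not switches, and which obstruct the ruling from being a union of nested eyes, are resolved by introducing double points, that is, immersed cobordisms. Finally the resulting trivial unlinks are capped with minimum disks.

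Two checks are needed at this stage.
\begin{itemize}
\item \emph{Homeomorphism.} By construction $\chi(L_j)=\chi(\rho_j)$, and the number of double points is determined by the combinatorics of $\rho_j$. I expect the blocks to be arranged symmetrically, so that all $L_j$ with the same $k$ have the same genus, the same number of boundary components and the same number of double points. They are then abstractly homeomorphic as immersed surfaces.
\item \emph{Torus charts.} The augmentation induced by $L_j$ via \cite{Pan_Rutherford_2023} is parametrized by local systems on $L_j$. Since $L_j$ has first Betti number $2k$ relative to the appropriate data, this gives a map $(\C^*)^{2k}\to\A ug(\La_{P(z^2)})$. I would compare with Theorem~\ref{thm:aug}: augmentations from $L_j$ have the ruling $\rho_j$ as their associated ruling, so the image lies in the component indexed by $\rho_j$. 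A direct computation of the augmentation values on the crossings should show that the chart is an isomorphism onto that component.
\end{itemize}

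The main step is smooth non-isotopy, which I would deduce from Theorem~\ref{intro:prop:notsmooth}. For $i\neq j$ I would draw Kirby diagrams of the exteriors $W_i,W_j$ of the ribbon fillings $L_i,L_j$. These come from the standard recipe: a dotted circle for each minimum, a $2$-handle for each saddle, and an extra handle for each double point. I would then exhibit the set of $(-1)$-framed unknots required in Definition~\ref{defn:compatible}.

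\begin{itemize}
\item \emph{Weinstein side.} After adding the unknots, the diagram for $W_i$ should become a Legendrian surgery diagram with each framing equal to $tb-1$, so it is Weinstein.
\item \emph{Non-Weinstein side.} The same unknots added to $W_j$ should produce a diagram containing an embedded sphere of self-intersection $-1$ in the wrong position. Equivalently, it should violate the adjunction inequality of Lisca--Mati\'c \cite{Lisca_Matic_1997}, so it admits no Stein structure.
\end{itemize}

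I expect this asymmetric verification to be the hardest part. The unknots have to be chosen around the specific double points of $L_i$ so that they cancel them Legendrian-ly for $L_i$ but not for $L_j$. My first attempt would be to place the unknots as meridians linking the double-point handles of the ruling $\rho_i$. Because $\rho_j$ switches at different crossings, the handles of $\rho_j$ in those positions then acquire framing above $tb-1$.
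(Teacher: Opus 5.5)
Your overall architecture (fillings from pinching sequences compatible with the rulings, then Theorem~\ref{intro:prop:notsmooth}) matches the paper. The step you flag as the main one is where the proposal breaks down.

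\textbf{The gap.} You are missing the fact that the $a_k$ fillings in a given family all coincide outside the stacked subfront $\La_{a_k+1}$. The pinches on the $m(9_{46})$ factors and the immersed filling of the resulting $T(-2,\cdot)$ link are the same for every $L_j$, and hence so are all the double points. This sinks your proposed placement of unknots in four ways.
\begin{enumerate}
\item Unknots placed as meridians of the double-point $2$-handles sit in identical configurations in $W_i$ and $W_j$, so they cannot separate them.
\item A $(-1)$-framed meridian of a $2$-handle can be blown down, splitting off an $\overline{\C P^2}$ summand. Both enlarged exteriors would then contain a $(-1)$-sphere, and neither would be Weinstein.
\item Such unknots lie in the region $S$ of the $2$-handles rather than in $T$ linking carved $1$-handles. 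So they do not form a $U$-compatible system, and Theorem~\ref{intro:prop:notsmooth} does not apply.
\item A diagram in which some handle has framing above $tb-1$ does not obstruct a Weinstein structure. The obstruction has to be intrinsic, namely an embedded $(-1)$-sphere via Lisca--Mati\'c.
\end{enumerate}

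\textbf{What is needed.} Localize in $\La_{a_k+1}$ and reuse the system from Proposition~\ref{prop:lan_not_smooth}, whose unknots link the carved $1$-handles of the unlink. Take $\rho_i$ to have more red layers than $\rho_j$.
\begin{itemize}
\item At each rightmost switch of $\rho_j$, take $U_B$ or $U_R$ according to the colour of the switch. These form a chain linking consecutive carved $1$-handles of $W_j$ with linking number $\pm1$. Add a $(-2)$-framed unknot linking only the top carved $1$-handle.
\item In $W_j$, sliding along the chain cancels all carved $1$-handles but one. Sliding every $0$-framed handle (band and double-point handles alike) over the $(-2)$-framed unknot lowers its framing until it equals $tb-1$ for some Legendrian realization. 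The result is Weinstein by Eliashberg--Gompf.
\item In $W_i$, look at the top layer where $\rho_j$ is blue but $\rho_i$ is red. The chosen $U_B$ has linking number $0$ with a single carved $1$-handle and can be isotoped next to a $0$-framed band handle. A blow-down handle slide there isolates a $(-1)$-framed unknot, so $W_i$ with the added handles is not Weinstein.
\end{itemize}

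The same observation, that the fillings differ only on $\La_{a_k+1}$, is what justifies the homeomorphism claim you left as an expectation. Your torus-chart step is also only heuristic. For immersed fillings the induced augmentations depend on values at the double points as well as on local systems, so identifying the chart needs more than a count of $b_1(L_j)$.
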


\begin{remark}
Some pairs of homeomorphic fillings in Theorem~\ref{thm:diff} and Theorem~\ref{thm:notsmooth} are exotic, but we expect that many are not since they may fail to be topologically isotopic. 
\end{remark}

The proof of Theorem~\ref{thm:notsmooth} does not rely on a general relationship between irreducible components of the augmentation variety and smooth isotopy classes of fillings. Rather, for the specific family $\La_{P(z^2)}$, we construct fillings associated to the various augmentation components and distinguish them using the obstruction of Theorem~\ref{intro:prop:notsmooth}. We pose the following conjecture motivated by the computational evidence provided by Theorem~\ref{thm:notsmooth}, and Example~\ref{ex:m821}:

\begin{conjecture}\label{conj:diff}
Let $\La$ be a Legendrian link whose (graded or ungraded) naive augmentation variety modulo dg-homotopy $\A ug(\La)$ has two or more (possibly intersecting) irreducible components. Suppose that $L_1$ and $L_2$ are two embedded exact Lagrangian fillings of $\La$ whose toric charts are contained in different irreducible components of $\A ug(\La)$. Then, $L_1$ and $L_2$ are not smoothly isotopic surfaces relative to their boundary.
\end{conjecture}

Rainbow closures of positive braids have irreducible augmentation varieties, and are conjectured to have restricted graded ruling polynomials~\cite[Conjecture 2.1]{SuXiYu}, prompting the following open question:

\begin{question}
Which polynomials can be realized as the graded ruling polynomial of a Legendrian $\La$ whose augmentation variety $\A ug(\La)$ is irreducible? 
\end{question}

Another classification problem of interest is that of symplectic fillings of transverse knots up to symplectic isotopy rel. boundary~\cite{Cao_Gallup_Hayden_Sabloff_2014,Baykur_Horn-Morris_2016, Oba_2020} which has applications to the study of closed symplectic surfaces and complex surfaces~\cite{Boileau_Orevkov_2001,Hayden}. Hayden~\cite[Theorem 3.1]{Hayden} constructed for every $g\geq 0$ an infinite family of transverse knots such that every knot bounds a pair of orientable exotic symplectic fillings of genus $g\geq 0.$ There is a standard way to smoothly approximate a (not necessarily exact) orientable Lagrangian filling of a Legendrian link $\La$ by a symplectic filling of the transverse pushoff of $\La$ (see~\cite[Lemma 4.1]{Cao_Gallup_Hayden_Sabloff_2014}). Thus, we obtain the following corollary of Theorem~\ref{thm:diff}:

\begin{corollary}\label{cor:symp}
Let $k,g\geq 0$. There exist transverse knots $K$ with $k$ embedded orientable symplectic fillings $L$ with genus $g$ that are pairwise smoothly non-isotopic.
\end{corollary}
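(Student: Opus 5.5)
The corollary follows by combining Theorem~\ref{thm:diff} with the standard approximation of~\cite[Lemma 4.1]{Cao_Gallup_Hayden_Sabloff_2014}. Fix $k,g\geq 0$ and set $n=1-2g\leq 1$. Theorem~\ref{thm:diff} gives a Legendrian knot $\La$ with $k$ embedded exact Lagrangian fillings $L_1,\dots,L_k\subset \mathbb{B}^4$, each with $\chi(L_i)=n$ and pairwise smoothly non-isotopic rel boundary.

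\textbf{Orientability.} A filling of a knot has one boundary component. If $L_i$ is orientable, then $\chi(L_i)=1-2g$ forces $L_i$ to have genus exactly $g$. The proof of Theorem~\ref{thm:diff} produces orientable fillings, obtained from graded normal rulings or orientable decomposable cobordisms. Even if that were in doubt, the parity $\chi=1-2g$ is compatible with both cases, so I would record orientability explicitly from the construction rather than from $\chi$ alone. This is the only step needing care: $\chi=n$ alone does not pin down orientability.

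\textbf{Passing to symplectic fillings.} Let $K$ be the transverse pushoff of $\La$. By~\cite[Lemma 4.1]{Cao_Gallup_Hayden_Sabloff_2014}, each orientable Lagrangian filling $L_i$ can be perturbed, by a $C^0$-small smooth isotopy supported near the boundary collar and rel the far interior, to a symplectic surface $\Sigma_i$ with $\partial\Sigma_i=K$. The perturbation is a smooth isotopy of surfaces. It drags the boundary from $\La$ to $K$ through a fixed isotopy in a collar, and this isotopy is the same for all $i$, since it depends only on $\La$ near the boundary. So each $\Sigma_i$ is an embedded orientable symplectic filling of $K$ of genus $g$.

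\textbf{Non-isotopy.} Suppose $\Sigma_i$ and $\Sigma_j$ were smoothly isotopic rel $K$. Concatenating with the common collar isotopy and its inverse would give a smooth isotopy from $L_i$ to $L_j$ rel $\La$, contradicting Theorem~\ref{thm:diff}. Hence the $\Sigma_i$ are pairwise smoothly non-isotopic, which proves the corollary.
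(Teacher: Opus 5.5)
Your route is the paper's: apply Theorem~\ref{thm:diff} with $n=1-2g$, read orientability off the construction (the fillings $L_i\# L^g$ of $\La_k\#\La^g$), and invoke \cite[Lemma 4.1]{Cao_Gallup_Hayden_Sabloff_2014}. You are right that $\chi=1-2g$ alone does not force orientability. The paper's argument is exactly this one-line reduction.

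There is, however, a false step in your \emph{Passing to symplectic fillings} paragraph. The perturbation cannot be supported in a boundary collar and fixed on the far interior. On a Lagrangian, $\omega|_{L_i}\equiv 0$ at every point, so any untouched region stays non-symplectic. The perturbation is necessarily global: for instance, replace $L_i$ by the graph of $\epsilon\beta_i$ in a Weinstein neighborhood, where $\beta_i$ is a $1$-form with $d\beta_i$ an area form on $L_i$ (this is where orientability enters). What your argument actually needs is weaker and true. All the $L_i$ coincide with the cylinder over $\La$ near the boundary, so the $\beta_i$ can be chosen to agree there, and then the boundary motion $\La\to K$ is common to all $i$.

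Relatedly, the concatenation $L_i\to\Sigma_i\to\Sigma_j\to L_j$ is not itself rel $\La$, since its boundary runs out along this common path and back. There are two ways to fix this. You can straighten it: the boundary loop is null-homotopic, and restriction of embeddings to the boundary is a fibration. Or you can conjugate instead of concatenating. Suppose ambient isotopies $\Phi^{(i)},\Phi^{(j)}$ carry $L_i,L_j$ to $\Sigma_i,\Sigma_j$ and agree on $S^3$, and $\Psi$ fixes $S^3$ with $\Psi_1(\Sigma_i)=\Sigma_j$. Then $(\Phi^{(j)}_1)^{-1}\Psi_1\Phi^{(i)}_1$ is a diffeomorphism of $\mathbb{B}^4$ fixing $S^3$ and carrying $L_i$ to $L_j$. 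That is precisely what the exterior argument in the proof of Theorem~\ref{intro:prop:notsmooth} rules out. With these repairs your proof is complete.
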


Among the $k$ fillings, there are at least $\lfloor \frac{k}{2}\rfloor$ exotic pairs. However, we do not expect them all to be topologically isotopic rel. boundary. By applying the argument in~\cite[Proof of Theorem A]{Hayden} to the symplectic fillings in Corollary~\ref{cor:symp}, one can construct pairs of exotic smooth fillings of knots in $S^3$ that admit holomorphic embeddings into $\mathbb{B}^4.$

\subsection{Remarks on graded ruling polynomials and cluster structures on augmentation varieties}\label{sec:motivation}

We provide further context for the motivation behind Question~\ref{ques:aug_geography}. In some cases, graded normal rulings, and therefore the graded ruling polynomial, are known to encode ``cluster theoretic" information. For example, there are Legendrian knots $\La$ for which the graded ruling polynomial encodes information on $\mathbb{L}$-compressing cycles of embedded fillings of $\La$ and therefore information on ``potential" cluster structures on $\A ug(\La)$. In particular, the max-tb $7_4$ Legendrian has graded ruling polynomial $z^2$, an embedded filling of genus $1$, but no immersed filling with one double point. See~\cite[Section 3.1]{CLLMPT} for more examples of Legendrians with obstructed immersed fillings. Initial computations by the authors for small crossing Legendrians (see the Legendrian atlas~\cite{ChNg}) indicate that when two Legendrian links have the same graded ruling polynomial, their augmentation varieties are often homeomorphic.
See Section~\ref{sec:SZ} for examples of this phenomenon. Corollary~\ref{cor:diff_aug} demonstrates that this is in fact not generally true for all Legendrian links.

We now discuss how the graded ruling polynomial relates to the cluster structure on the augmentation variety for a restricted class of Legendrians. Let $w_0$ be the longest word in $S_n$ in the Bruhat order, and $\Delta \in Br^+_n$ the positive braid lift of the permutation $w_0$, so  $\Delta$ is a braid word for the half-twist. Suppose that $\beta\in Br_n^+$ has demazure product $\delta(\beta)=w_0$, that is that the positive braid contains a half twist. Let $\La(\beta\Delta)$ denote the $(-1)$ closure of $\beta\Delta$. The augmentation variety of $\La(\beta\Delta)$ is isomorphic to a braid variety $X(\beta)$ which was shown to be a cluster variety by~\cite{CGGILS} and~\cite{Galashin_Lam_Sherman-Bennett_2025}. Braid varieties admit weave decompositions~\cite{CGGS}. Henry-Rutherford~\cite{HR} showed that, more generally, the naive augmentation variety of a fixed front has a decomposition determined by graded normal rulings. In fact, the ruling decomposition on  $\A ug(\La(\beta\Delta))$ is equivalent to a weave decomposition of the isomorphic braid variety~\cite{JCHLLW}. Consequently, there exists an algorithm for computing the cluster variables of the maximal cluster torus of $X(\beta)$ through Morse complex sequences associated with graded normal rulings~\cite[Theorem 1.10]{JCHLLW}. The correspondence in~\cite[Theorem 1.2]{JCHLLW} matches graded normal rulings $\rho$ of $\La(\beta\Delta)$ and fillings $\mathfrak{w}_{\rho}$ described by Demazure weaves. The Legendrian $\La(\beta\Delta)$ has a unique graded normal ruling $\rho_0$ with a maximal number of switches corresponding to an embedded weave $\mathfrak{w}_{\rho_0}$~\cite[Proposition 2.11, Proposition 4.43]{JCHLLW}. Thus, the graded ruling polynomial $R_{\La(\beta\Delta)}(z)=\displaystyle\sum_{k=0}^na_kz^{2k}$ has $a_n=1$. Demazure weaves of $\La(\beta\Delta)$ are equivalent  up to mutation and weave equivalence moves~\cite[Theorem 4.12]{CGGS}. Therefore, the immersed Demazure weaves $\mathfrak{w}_{\rho}$ of $\La(\beta\Delta)$ can be obtained from $\mathfrak{w}_{\rho_0}$ of $\La(\beta\Delta)$ after a sequence of mutations and cycle deletions. Thus, the coefficient $a_{n-1}$ of $R_{\La(\beta\Delta)}(z)$ is equal to the number of immersed fillings with a single double point. The coefficient $a_{n-1}$ is also then equal to the number of $\mathbb{L}$-compressing cycles on the embedded filling $\mathfrak{w}_0$ which in turn is equal to the number of mutable vertices in the quiver $Q$. For $\La(\beta\Delta)=T(2,m)$, $m>2,$ one can verify that the coefficients $a_{n-k}$ for $k\geq 1$ encode the number of immersed fillings with $k$ double points, which in turn is equal to the number of $k$-tuples of vertices in $Q$ that do not share any edges. These examples suggest that the coefficients of the graded ruling polynomial may encode constraints on the cluster structure of the augmentation variety when it is irreducible which is the case for braid varieties. We do not formulate a precise conjecture here, but view this as an interesting direction for future work.

\subsection{Outline:} Section~\ref{sec:background} reviews the necessary background on Legendrian links, normal rulings, augmentations, and exact Lagrangian fillings. In Section~\ref{sec:SZ} we discuss SZ-pinch moves and their effect on the ruling polynomials and augmentation varieties. In Section~\ref{sec:construction} we construct the Legendrians $\La_{P(z^2)}$, verify $R_{\La_{P(z^2)}}(z)=P(z^2)$ and prove Theorem~\ref{thm:main} and Theorem~\ref{thm:non-orientable}. In Section~\ref{sec:aug_stack} we compute the augmentation varieties of $\La_{P(z^2)}$ and prove Theorem~\ref{thm:aug}. In Section~\ref{sec:exotic} we develop our tool to detect immersed ribbon fillings that are not smoothly isotopic rel. boundary and prove Theorems~\ref{thm:diff},~\ref{thm:notsmooth}, and~\ref{intro:prop:notsmooth}.

\subsection{Acknowledgments:} We thank Roger Casals, Daniela Cortes Rodriguez, James Hughes, Youlin Li, and Lenhard Ng for helpful conversations. YP is supported by NSFC Grant $0401300024$.

\section{Background}\label{sec:background}

\subsection{Legendrian links and ruling polynomials}
Let $\R^3_{std}$ denote $\R^3_{(x,y,z)}$ with the standard contact structure given by $\xi_{std}=ker(dz-ydx)$. A Legendrian link $\Lambda\subset \R^3_{std}$ is a smooth link such that $T_p\Lambda\subset \xi_{std}$ for all $p\in \La.$ The projection $\pi_F: \R^3_{(x,y,z)}\rightarrow \R^2_{(x,z)}$ is called the \emph{front projection}, and the image $\pi_F(\La)$ a \emph{front}. Call a subset of a front a \emph{subfront}. The projection $\pi_L: \R^3_{(x,y,z)}\rightarrow \R^2_{(x,y)}$ is called the \emph{Lagrangian projection}. The image of Legendrian link $\Lambda$ in $\pi_L$ is called a \emph{Lagrangian diagram} for $\Lambda$, and it is an immersed curve with zero oriented area. There is a standard way of passing from a front to a Lagrangian projection called the Lagrangian resolution (or Ng's resolution)~\cite[Section 2.1]{Ng_2003} where crossings map to crossings, and cusps are resolved as shown in Figure~\ref{fig:ng}.

\begin{figure}[!ht]
   
    \centering
    \includegraphics[width=2in]{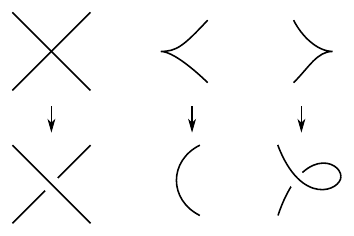}
    \caption{The Lagrangian resolution of a front.}
    \label{fig:ng}
\end{figure}

\begin{definition}\label{dfn:maslov}
    Let $\La\subset \R^3_{std}$ be a Legendrian link whose components have rotation number zero, and with fixed front $D$. A \textbf{Maslov potential} on a front $D$ is a locally constant function 
    \[
    \mu \colon D \setminus \{\text{cusps}\}\longrightarrow \Z
    \]
    that increases by one when we pass upwards through a cusp of $D$, and decreases by one when we pass downwards through a cusp of $D$.
    Call a front equipped with a Maslov potential a \textbf{graded front}.
\end{definition}

\begin{remark}
The Maslov potential is only well defined up to a shift of each connected component of $\La$. For links, we specify the Maslov potential on each link component by indicating the value on at least one strand of the link component. 
\end{remark}

\begin{definition}\label{defn:deg}
For a crossing $c$ in the graded front $D$ of a Legendrian, denote the upper strand (resp. lower strand) by $c_u$ (resp. $c_l$). 
The \textbf{degree} of $c$ is defined by $|c|=\mu(c_u)-\mu(c_l)$.
\end{definition}

\begin{definition}\label{defn:ruling}
 A \textbf{ungraded normal ruling} of the front $D$ of a Legendrian link $\Lambda \subset \R^3_{std}$ is a decomposition of $D$ into pairs of paths (called \emph{ruling paths}) where the two paths begin at the same left cusp and end at the same right cusp, and the pair of paths co-bound a topological disk (called a \emph{ruling disk}). The ruling paths are required to be smooth, except at cusps and certain crossings called \emph{switches}, and satisfy:
    \begin{enumerate}
        \item Any two ruling paths only intersect at crossings or cusps;
        \item Near a switch, pairs of ruling disks must look like one of the diagrams in Figure~\ref{fig:ruling_switches}.
    \end{enumerate}
     \begin{figure}[!ht]
        \centering
        \includegraphics[width=0.5\linewidth]{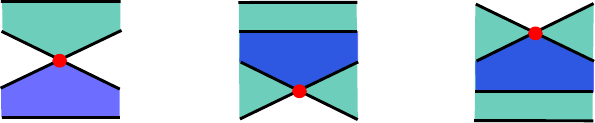}
        \caption{All possible configurations of a normal ruling near a switch.}
\label{fig:ruling_switches}
    \end{figure}
A \textbf{graded normal ruling} is an ungraded normal ruling on a graded front where switches can occur only at crossings with zero degree.
\end{definition}

\begin{definition}\label{defn:ruling_polynomial}
Let $\Lambda \subset \R^3_{std}$ be a Legendrian link with a fixed front $D$. The~\textbf{ungraded ruling polynomial}, $R^1_{\La}(z)$, is given by
$$R_{\Lambda}^1(z)=z^{|\pi_0(\Lambda)|}\sum_{\rho}z^{-\chi(\rho)}$$
where $\rho$ is an ungraded normal ruling of $\La$. The Euler characteristic of $\rho$ is $\chi(\rho):=d(\rho)-s(\rho)$, where $s(\rho)$ is the number of switches in $\rho$ and $d(\rho)$ is the number of ruling disks in $\rho.$ 

If $D$ is a graded front, the \textbf{graded ruling polynomial}, $R_{\Lambda}(z)$, is given by
$$R_{\Lambda}(z)=z^{|\pi_0(\Lambda)|}\sum_{\rho}z^{-\chi(\rho)}$$
where $\rho$ is a graded normal ruling of $\La$ and $\chi(\rho)$ is the Euler characteristic of $\rho$. 

\end{definition}

\begin{remark} The ungraded normal ruling is also referred to as the $1$-graded normal ruling first introduced by Chekanov and Pushkar~\cite{ChekanovPushkar}, so we use the notation $R_{\La}^1(z)$ for the ungraded ruling polynomial. From here on out, we write ruling instead of graded normal ruling unless we are working with an ungraded normal ruling. We also assume that all fronts are graded.
\end{remark}

\begin{example}
Let $H_{n}$ for $n\in \Z$ denote the Hopf link with the graded front shown in the rightmost diagram of Figure~\ref{fig:Hopf}. If $n\neq 0$, $H_n$ has a unique ruling. Only $H_0$ has ruling polynomial equal to $z^2(z^0+z^{-2})=z^2+1$.
\end{example}

 \begin{figure}[!ht]
        \centering
        \labellist
        \small
        \pinlabel $0$ at 130 20
        \pinlabel $1$ at 130 40
        \pinlabel $0$ at 300 20
        \pinlabel $n+1$ at 310 40
        \endlabellist
        \includegraphics[width=0.5\linewidth]{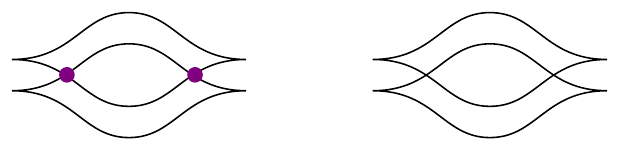}
        \caption{Rulings of the Hopf link $H_n$.}
        \label{fig:Hopf}
    \end{figure}

The following two properties of ruling polynomials follow directly from Definition~\ref{defn:ruling_polynomial}.
  
\begin{proposition}\label{prop:basic}
  \begin{enumerate}
    \item The graded and ungraded ruling polynomial of the max-tb unknot $\La_1$ is equal to $1$.
    \item The graded (ungraded) ruling polynomial of $\La \sqcup \La'$, is equal to the product of the graded (ungraded) ruling polynomials of $\La$ and $ \La'$.
\end{enumerate} 
\end{proposition}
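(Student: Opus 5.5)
Both parts follow by unwinding Definition~\ref{defn:ruling} and Definition~\ref{defn:ruling_polynomial} for the standard fronts. No deeper input is needed.

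For (1), take the standard front of the max-tb unknot $\La_1$: one left cusp, one right cusp, no crossings. There are no crossings, so no ruling can have switches. The only decomposition into ruling paths pairs the upper and lower strands, which start at the left cusp, end at the right cusp, and bound the eye-shaped disk. This decomposition is trivially normal, and it is graded because the graded condition only restricts switches. So there is exactly one ruling $\rho$, with $d(\rho)=1$ and $s(\rho)=0$, giving $\chi(\rho)=1$. Since $|\pi_0(\La_1)|=1$, we get $R_{\La_1}(z)=R^1_{\La_1}(z)=z^{1}\cdot z^{-1}=1$.

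For (2), represent $\La\sqcup\La'$ by a front $D\sqcup D'$, placing $D'$ far to the right of $D$ (or disjoint in the $x$-direction), so there are no crossings between the two. Each ruling path is a smooth path from a left cusp to a right cusp that changes strands only at switches, which are crossings. So every ruling path, and hence every ruling disk, lies entirely in $D$ or entirely in $D'$. Thus a (graded or ungraded) normal ruling $\rho$ of $D\sqcup D'$ restricts to normal rulings $\rho|_D$ and $\rho|_{D'}$; the normality conditions at switches and the degree-zero condition are local to each crossing. Conversely, any pair of normal rulings glues to one of $D\sqcup D'$. This gives a bijection between rulings of $D\sqcup D'$ and pairs $(\rho,\rho')$.

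Under this bijection, disks and switches add, so $\chi(\rho\sqcup\rho')=\chi(\rho)+\chi(\rho')$. Also $|\pi_0(\La\sqcup\La')|=|\pi_0(\La)|+|\pi_0(\La')|$. The sum defining the polynomial therefore factors as
\[
z^{|\pi_0(\La)|+|\pi_0(\La')|}\sum_{\rho,\rho'} z^{-\chi(\rho)-\chi(\rho')}=R_\La(z)\,R_{\La'}(z),
\]
and the same computation holds for $R^1$. The Maslov potentials are chosen componentwise, so the grading on $D\sqcup D'$ is the disjoint union of the gradings and crossing degrees are unchanged.

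The one subtle point is the choice of front: the ruling polynomial is defined for a fixed front. I would invoke the standard fact that it is a Legendrian isotopy invariant (Chekanov--Pushkar), so computing with the split front is legitimate.
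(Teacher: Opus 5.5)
Your argument is correct and is the same direct verification from Definitions~\ref{defn:ruling} and~\ref{defn:ruling_polynomial} that the paper relies on; the paper writes out no proof, saying only that both properties follow directly from the definition. Your appeal to Legendrian isotopy invariance, which justifies computing with the standard unknot front and with a horizontally split front, supplies a detail the paper leaves implicit.
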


\subsection{Augmentations of the Legendrian contact dg-algebra}\label{sec:aug}

The Legendrian contact dg-algebra $(\A(\La;R), \partial)$ is a unital graded algebra over a unital coefficient ring $R$ that is generated, freely commutatively, by the set of Reeb chords of $\La$. A Reeb chord of $\La$ is a trajectory of the Reeb vector field that begins and ends on $\La$. The grading on $\A(\La; R)$ is defined on the Reeb chord generators via a Conley-Zehnder index, and is defined on a word as the sum of the gradings of the letters in the word. The differential $\partial$ counts rigid $\mathcal{J}$-holomorphic disks in the symplectization with boundary on $\mathbb{R}\times \La$. There is a combinatorial method for computing $(\A(\La; \C[H_1(\La)]), \dd)$ from the Lagrangian resolution of a front $D.$ In this case, Reeb chords correspond to crossings  and the $\mathcal{J}-$holomorphic disks correspond to immersed polygons. See~\cite[Section 3]{EN_survey} or~\cite[Section 3]{CN} for more details.

 The Legendrian contact dg-algebra $(\A(\La;R), \dd)$ up to stable tame isomorphism is a powerful Legendrian invariant. 
 However, distinguishing two Legendrians directly from their dg-algebras is an undecidable problem~\cite{MR26}. Augmentations offer a more tractable invariant. 
 
\begin{definition}
 An \textbf{augmentation} of $(\A(\La; \C[H_1(\La)]),\dd)$ to $\C$ is a dg-algebra map to the ground dg-algebra $\e:(\A(\La; \C[H_1(\La)]),\dd)\rightarrow (\C, 0)$ where $\C$ is supported in degree $0$. That is, $\e$ is a unital algebra homomorphism such that
 \begin{enumerate}
     \item $\e(a)=0$ if $|a|\neq 0$, and
     \item $\e\circ \dd =0$.
 \end{enumerate}
Two augmentations $\e_1,\e_2$ of $\A(\La)$ are \textbf{dg-algebra homotopic} if there exists a degree $1$ $(\e_1,\e_2)$-derivation $H: \A(\La)\to \C$ such that $\e_1-\e_2= H\circ \dd$. An $(\e_1,\e_2)$-derivation $H: \A(\La)\to \C$ is a linear map such that $H(xy)= H(x)\e_2(y)+ (-1)^{|x|}\e_1(x)H(y)$.
\end{definition}

\begin{remark}
For $\La$ a link, there is another equivalence relation between two augmentations given by split dg-algebra homotopy, first introduced in~\cite[Definition 5.3]{CLLMPT}. A small error in the definition was corrected in~\cite[Definition 1.1]{Hamming_Gao}. We only use dg-algebra homotopy for consistency with the case where $\La$ is a knot; allowing us to compare augmentations between knots and links as in Section~\ref{sec:SZ}.
\end{remark}

\begin{definition}\label{defn:augmentation_stack}
Let $\La\subset \R^3_{std}$ be a Legendrian link with a fixed graded front $D$. Let the \textbf{augmentation variety} $\A ug(\La)$ denote the quotient of the naive augmentation variety of $D$ (over $\C$) modulo  dg-algebra homotopy.
\end{definition}

\begin{remark}\label{rem:aug_stack}
The augmentation variety is not generally an algebraic variety, but a quotient of an algebraic variety. From private correspondence with Casals and Ng, this quotient is also the 1-truncation of a derived augmentation stack defined as the moduli of pseudoperfect objects over the dg-category of perfect modules over the Legendrian contact dg-algebra.
\end{remark}

\begin{figure}[!ht]
    \centering
    \labellist
    \pinlabel $t$ at 255 130
  
    \pinlabel $a_1$ at 240 125
    \pinlabel $d_1$ at  50 82
    \pinlabel $c_1$ at 105 80
    \pinlabel $d_2$ at 150 80
    \pinlabel $c_2$ at 185 78
  
    \pinlabel $b_1$ at 0 100
    \pinlabel $b_2$ at 125 108
    \pinlabel $b_3$ at 217 105
    \pinlabel $a_2$ at 242 85
    \pinlabel $b_4$ at 0 70
    \pinlabel $b_5$ at 127 58
    \pinlabel $b_6$ at 215 55
    \pinlabel $a_3$ at 240 37
    
    \endlabellist
    \includegraphics[width=0.5\linewidth]{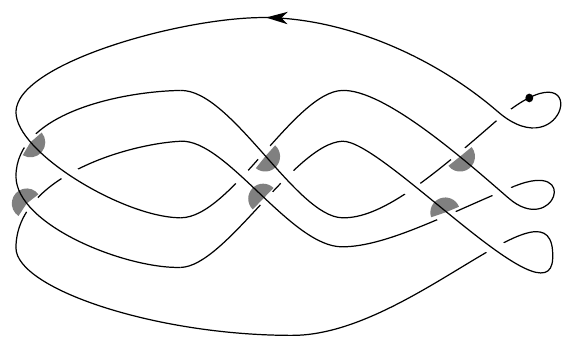}
    \caption{A Lagrangian projection of $\La_2$, the max-tb representative of $m(9_{46})$.}
    \label{fig:2lag}
\end{figure}

\begin{example}\label{ex:946}
The Legendrian contact dg-algebra of $\La_2$ is a $\C[t,t^{-1}]$ tensor algebra generated by the Reeb chords $$a_1,a_2,a_3, d_1,d_2, b_1, \ldots, b_6, c_1, c_2$$ as labeled in Figure~\ref{fig:2lag}. We have that
$|a_i|=|d_i|=1,~|b_i|=0,$ and $|c_i|=-1.$
To compute the augmentation variety it suffices to consider the truncated differential, $\partial_0$, with only words containing $0$ or $-1$ degree generators. For $\La_2$, the truncated differential is:

    \begin{align*}
        \partial_0 a_1 &= t^{-1}+b_1(1+b_3b_2)+b_3~~&\partial_0 d_1 &= b_1b_4\\
        \partial_0 a_2 &= 1+b_1b_6+b_3b_4~~&\partial_0 d_2 &=b_2b_1+b_4b_5+1\\ 
        \partial_0 a_3 &= 1+ (1+b_6b_5)b_4+b_6\\
        \partial_0b_2&=-b_4c_1~~&\partial_0b_5&=c_1b_1\\
        \partial_0 b_3 &= -b_1c_2~~&\partial_0 b_6 &= c_2b_4\\
    \end{align*}

Augmentations of $\La_2$ are determined by $\partial_0a_i=0$ and $\partial_0 d_i=0$, while dg-algebra homotopy equivalences are determined by $\partial_0 b_i$. By~\cite{Leverson_2016} any augmentation $\e$ of a knot must satisfy $\e(t)=-1.$ A direct computation shows that there are two augmentations to $\C$ up to dg-algebra homotopy equivalence which send the generators $b_i$ to the following values:
    
  \begin{equation}\label{eq:aug}
  \begin{array}{|c|c|c|c|c|c|c|c|}
  \hline
        \mbox{Augmentation} & b_1&  b_2& b_3& b_4& b_5& b_6 & t\\
        \hline
        \e_1 & 1&-1&0&0& 0&-1&-1\\
        \hline
        \e_2 &0 &0&1&-1&1&0&-1\\
        \hline
    \end{array}
    \end{equation}
  \noindent Thus, $\A ug(\La_2)$ is a disjoint union of two points. 

\end{example}

\subsection{Exact Lagrangian fillings}\label{sec:fillings}

See~\cite{EN_survey, survey_wiscon} for surveys on exact Lagrangian cobordisms and fillings.

\begin{definition}\label{defn:cobord} 
	Let $\Lambda_{+},\Lambda_{-}\sse(\R^{3}, \xi_{std})$ be Legendrian links.  
	An \textbf{exact Lagrangian cobordism} $L$ from $\Lambda_{-}$ to $\Lambda_{+}$ is an embedded Lagrangian surface in the symplectization $(\R_t\times \R^{3}, d(e^t(dz-y dx)))$
	   that  has  cylindrical ends  and is exact in the following sense:  
for some $N>0$, 
	\begin{enumerate}
		\item  $L \cap ([-N,N]\times \R^3)$ is compact,
		\item  $L \cap ((N,\infty)\times \R^3)=(N,\infty)\times \Lambda_{+}$ and $L\cap ((-\infty,-N)\times \R^3)=(-\infty,-N)\times \Lambda_{-}$, 
	\item there exists  a function $f: L \rightarrow \R$  and constants $\mathfrak c_\pm$ such that 
		$e^t\alpha|_{L} = df$, where $f|_{(-\infty, -N) \times \Lambda_{-}} = \mathfrak c_{-}$, and $f|_{(N, \infty) \times \Lambda_{+}} = \mathfrak c_{+}$.
	\end{enumerate}
	By definition, an \textbf{exact Lagrangian filling}  $L$ of a Legendrian link $\Lambda$ is an exact Lagrangian cobordism from the empty set $\emptyset$ to the Legendrian link $\Lambda$.
\end{definition}

There is a natural identification between exact Lagrangian fillings of $\La\subset \R^3_{std}$ in the symplectization of $\R^3_{std}$ and exact Lagrangian fillings in $(\mathbb{B}^4, \omega_{std})$ of $\La\subset (S^3, \xi_{std})$. Unless otherwise noted, fillings refers to exact Lagrangian fillings. All of the fillings we construct are decomposable, that is concatenations of elementary cobordisms including saddle cobordisms induced by pinch moves (see \cite[Section 6]{EHK} or \cite[Section 2]{CN}). Two Legendrians $\Lambda_{\pm}$ are related by a pinch move if the front of $\Lambda_{-}$ is given by taking two horizontal arcs in the front of $\Lambda_+$ and replacing them with two cusps. The location of a pinch move on a front can be indicated with a vertical line. If two Legendrians $\Lambda_-$ and $\Lambda_+$ are related by an orientable pinch move, then there exists an exact Lagrangian orientable saddle cobordism from $\Lambda_{-}$ to $\Lambda_+$. If they are instead related by a non-orientable pinch move, then there exists an exact Lagrangian non-orientable cobordism between $\Lambda_{-}$ and $\Lambda_{+}$ which is topologically an $\R\mathbb{P}^2$ with punctures union a collection of cylinders. See \cite[Section 4]{CN} for a description of the dg-algebra maps induced by decomposable Lagrangian cobordisms.

\subsection{SZ-pinch moves.}\label{sec:SZ}

In this section we describe how a set of moves called ``SZ-pinch moves" change the ruling polynomial $R_\La(z)$ and the augmentation variety $\A ug(\La)$. The main application is towards computing the ruling polynomial and augmentation variety without explicit long computations. See Figure~\ref{fig:szpinch} for an SZ-pinch move on a front. Unless otherwise stated, the Maslov potentials of the two components of $\La_+$ differ by $1$ so that the crossing $b$ is of degree $0$.

\begin{figure}[!ht]
    \centering
    \labellist
    \pinlabel $\La_+$ at 250 150
    \pinlabel $\La_-$ at 250 20
    \pinlabel $b$ at 100 148
    \pinlabel $b$ at 400 175
    \tiny{
    \pinlabel $m$ at -10 140
    \pinlabel $m+1$ at -20 170
    }
    \endlabellist
    
    \includegraphics[width=0.5\linewidth]{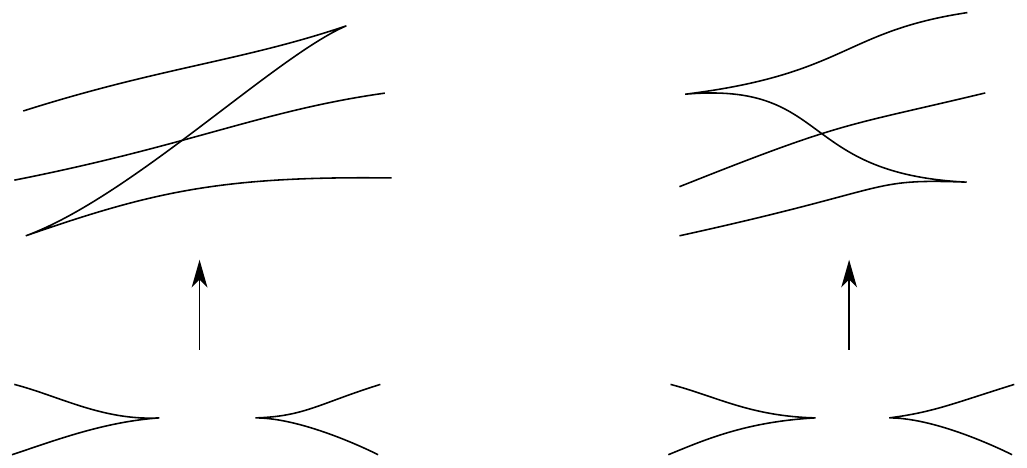}
    \caption{Two versions of the  SZ-pinch move.}
    \label{fig:szpinch}
\end{figure}

\begin{proposition}\label{prop:szpinch}
Let $\La_+,\La_-\subset \R^3_{std}$ be two Legendrian links whose fronts are related by an SZ-pinch move as shown in Figure~\ref{fig:szpinch}.

     \begin{enumerate}
         \item If $|\pi_0(\La_+)|=|\pi_0(\La_-)|+1$, then
         $$ R_{\La_+}(z)=z^2R_{\La_-}(z),~\text{and}~Aug(\La_+)~\text{is isomorphic to}~Aug(\La_-)\times (\C^*)^2$$
   
         \item If $|\pi_0(\La_+)|=|\pi_0(\La_-)|-1$, then
         $$R_{\La_+}(z)=R_{\La_-}(z),~\text{and}~Aug(\La_+)~\text{is isomorphic to}~ Aug(\La_-).$$
     \end{enumerate}
\end{proposition}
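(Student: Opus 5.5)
We compare rulings and augmentations locally, in the region of the front where the SZ-pinch move takes place. Outside this region the fronts of $\La_+$ and $\La_-$ agree. We can therefore set up bijections between rulings, and between augmentations modulo homotopy, that are the identity away from the pinch region, and then keep track of the change in Euler characteristic and in the local generators.

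\emph{Rulings.} In $\La_+$ the two strands passing through the pinch region belong to the ruling disks determined by the outside data, and they cross at the degree-$0$ crossing $b$. In $\La_-$ these two strands are replaced by a right cusp followed by a left cusp (the pinch), with the crossing $b$ adjacent. The first step is a case check on how ruling paths can pass through the local picture of Figure~\ref{fig:szpinch}. In $\La_-$ the new cusp pair forces the two strands to be paired locally. Every ruling of $\La_-$ then corresponds to a unique ruling of $\La_+$, obtained by switching at $b$ or not, as dictated by the normality condition. Conversely, the pairing near $b$ in a ruling of $\La_+$ determines a unique ruling of $\La_-$. Under this bijection the number of switches changes by a fixed amount and the number of disks changes by a fixed amount, so $\chi(\rho_+)-\chi(\rho_-)$ is a constant $c$. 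From the definition,
$$R_{\La_+}=z^{|\pi_0(\La_+)|-|\pi_0(\La_-)|}\,z^{-c}\,R_{\La_-}.$$
The constant $c$ can be read off from the topology: a pinch move is an orientable saddle, so $\chi$ changes by $-1$. In case (1) this gives the exponent $1+1=2$. In case (2) the pinch merges a component, so the component count drops by $1$; this is compensated by the extra disk created by the cusp pair, and the exponent is $0$. I would also check each case against a simple example, such as the Hopf link $H_0$ reducing to two unknots with $z^2+1$ versus $1$. Careful bookkeeping is needed here. Every non-switch configuration at $b$ in $\La_+$ must be shown to be matched, which in case (1) requires the outside pairing to be compatible.

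\emph{Augmentations.} For the Lagrangian resolutions, the dg-algebra of $\La_\pm$ differs only in a few local generators: $b$ and the new chords created by the resolved cusps. I would write down the local differential. The plan is to show that the DGA of $\La_+$ is stable tame isomorphic to the DGA of $\La_-$, with the extra data being a destabilizing pair, plus in case (1) extra invertible parameters. In case (1) the link $\La_+$ has one more component. This contributes a new homology variable $t$ with $\e(t)\in\C^*$, and the degree-$0$ crossing $b$ becomes forced to be invertible by a local relation of the form $\partial a = t^{\pm1}+b\cdot(\dots)$. Since the relevant differentials vanish, homotopy does not act on these coordinates, and we obtain the factor $(\C^*)^2$. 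In case (2) the local relations solve $\e(b)$ and the extra variables uniquely in terms of the remaining data, and the homotopies match. This yields the isomorphism $Aug(\La_+)\cong Aug(\La_-)$.

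\emph{Main obstacle.} The delicate step is the homotopy quotient in case (1). We must show that the two new $\C^*$ coordinates are not identified by any dg-homotopy, that is, that no $\partial_0$ of a degree-$0$ generator involves them, and that the map from $Aug(\La_-)\times(\C^*)^2$ is a bijection on homotopy classes rather than only on the naive varieties. I would first try to exhibit an explicit DGA map induced by the pinch cobordism, using \cite[Section 4]{CN}, together with an explicit inverse up to homotopy. Failing that, I would compute directly with the truncated differential $\partial_0$, as in Example~\ref{ex:946}.
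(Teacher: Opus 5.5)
\textbf{Rulings.} Your ruling count has the same shape as the paper's: a bijection $\rho_-\leftrightarrow\rho_+$ with $\chi(\rho_+)=\chi(\rho_-)-1$, which gives the exponent $|\pi_0(\La_+)|-|\pi_0(\La_-)|+1$, that is, $2$ or $0$. However, the local picture you describe is not the SZ-pinch, and the step you postpone as bookkeeping is the actual content.

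The crossing $b$ does not survive in $\La_-$. It is the crossing of the middle branch of the S/Z zigzag in $\La_+$ with the other strand, and $\La_-$ replaces the zigzag by a right cusp followed by a left cusp. This is why the generators of $\A(\La_-)$ are exactly those of $\A(\La_+)$ other than $b$.

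Suppose $b$ were not switched. Then the ruling path along the middle branch would run from the zigzag's left cusp to its right cusp. Its partner path would have to get from the lower branch of that left cusp to the upper branch of that right cusp, and no crossing is available in between. So \emph{every} ruling of $\La_+$ switches at $b$. The disks then agree with those of $\La_-$ outside the picture, and $\chi$ drops by exactly one in both cases.

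There is no ``switch or not'' alternative, and no extra disk compensates in case (2); the two cases differ only through $|\pi_0|$. Your proposed check of $H_0$ against two unknots is not an instance of the move. An orientable saddle changes $|\pi_0|$ by one, and $z^2+1$ is neither $z^2\cdot 1$ nor $1$.

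\textbf{Augmentations.} Your primary plan cannot work. Stabilizations add generators in pairs of adjacent degree, so $\sum_c(-1)^{|c|}$ is a stable tame invariant. But $\A(\La_+)$ has exactly one more generator than $\A(\La_-)$, namely the degree-$0$ chord $b$; equivalently, the saddle raises $tb$ by one. So there is no destabilizing pair.

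The paper uses your fallback, made precise, in two steps:
\begin{enumerate}
\item The pinch map of \cite[Section 4]{CN} sends $b\mapsto s$ and fixes all other chords. This gives an isomorphism $\A(\La_+;\C[H_1(\La_+)])\to\A(\La_-;\C[H_1(\La_+),s^{\pm1}])$. Strictly, this holds only after inverting $b$, which is harmless for augmentations because, as you observe, $\e(b)\neq 0$ is forced.
\item The coefficient map $\psi\colon\C[H_1(\La_+),s^{\pm1}]\to\C[H_1(\La_-)]$ finishes the comparison.
\end{enumerate}
In case (2), $\psi$ is an isomorphism. So $\e(b)$ is not solved for in terms of the other data; it is matched with the homology variable of the extra component of $\La_-$. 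In case (1), $\C[H_1(\La_+),s^{\pm1}]\cong\C[H_1(\La_-)][u^{\pm1},v^{\pm1}]$, and the two extra Laurent variables (your new $t$ and $\e(b)$) give the $(\C^*)^2$ factor.

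This also removes your ``main obstacle''. A DGA isomorphism carries dg-homotopies to dg-homotopies. Since $\partial s=\partial t=0$, any relation $\e_1-\e_2=H\circ\partial$ leaves the values of $s$ and $t$ unchanged. So neither a homotopy inverse nor a generator-by-generator check of $\partial_0$ is needed.
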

\begin{proof}

Since there is a decomposable Lagrangian cobordism $L$ from $\La_-$ to $\La_+$, every ruling $\rho_-$ of $\La_-$ induces a unique ruling $\rho_+$ of $\La_+$ with a switch at $b$. In fact, $\chi(\rho_+)=\chi(\rho_-)-1$. Then, the relations between the ruling polynomials of $\La_-$ and $\La_+$ follow directly from Definition~\ref{defn:ruling_polynomial}.

We prove the results for augmentation varieties. The dg-algebra map 
$$\Phi : ((\A(\La_+); \C[H_1(\La_+)]),\dd_+)\to ((\A(\La_-);\C[H_1(\La_-)]),\dd_-)$$ admits a factorization $\Phi= \phi_2\circ \phi_1$,
where 
$$\phi_1:((\A(\La_+); \C[H_1(\La_+)]),\dd_+)\to  ((\A(\La_-);\C[ H_1(\La_+), s^{\pm 1}]),\dd_-)$$
is the pinch map sending the Reeb chord $b$ to $s\in \C^{*}$ and fixing all other Reeb chords; and $$\phi_2:((\A(\La_-);\C[ H_1(\La_+), s^{\pm 1}]),\dd_-)\to ((\A(\La_-);\C[ H_1(\La_-)]),\dd_-)$$ is the dg-algebra map induced by the homomorphism on the coefficient ring 
$$\psi: \C[ H_1(\La_+), s^{\pm 1}])\to \C[ H_1(\La_-)].$$ 
By construction, the map $\phi_1$ is an isomorphism. If $|\pi_0(\La_+)|=|\pi_0(\La_-)|-1$, then $\psi$ is an isomorphism, and so is $\phi_2$. Therefore, $\Phi$ is an isomorphism between $\A(\La_+)$ and $\A(\La_-)$ and their augmentation varieties are isomorphic.  On the other hand, if $|\pi_0(\La_+)|=|\pi_0(\La_-)|+1$, then $\C[H_1(\La_+), s^{\pm 1}]$ is isomorphic to $\C[ H_1(\La_-)][u^{\pm 1}, v^{\pm 1}]$  for some invertible variables $u,v$. Hence the map $\psi$ obtained by forgetting the additional variables is surjective.
The isomorphism $\phi_1$ guarantees that $\A ug(\La_+)$ is isomorphic to $ \A ug(\La_-)\times (\C^*)^2$.
\end{proof}

\begin{remark}\label{rem:non-orientable_sz}
Proposition~\ref{prop:szpinch} can be modified for the case of non-orientable pinch moves as follows. Let $\mathbb{F}$ be a characteristic $2$ field. Let $\A ug_u(\La;\F)$ denote the ungraded augmentation variety (see~\cite[Definition 2.8]{CSHW}). Suppose that $\La_+, \La_-\subset \R^3_{std}$ are two Legendrian links whose fronts are related by an SZ-pinch move where the two components of $\La_+$ have Maslov potential that differs by any integer $n$. There are now three cases to consider. If $|\pi_0(\La_+)|=|\pi_0(\La_-)|$, then $R^1_{\La_+}(z)=zR^1_{\La_-}(z)$ and $\A ug_u(\La_+;\F)$ is isomorphic to $ \A ug_u(\La_-;\F)\times \F^*$. If $|\pi_0(\La_+)|=|\pi_0(\La_-)|+1$, then $R^1_{\La_+}(z)=z^2R^1_{\La_-}(z)$ and $\Aug_u(\La_+;\F)$ is isomorphic to $\A ug_u(\La_-;\F)\times (\F^*)^2$. Finally, if $|\pi_0(\La_+)|=|\pi_0(\La_-)|-1$, then $R^1_{\La_+}(z)=R^1_{\La_-}(z)$ and $\A ug_u(\La_+;\F)$ is isomorphic to $ Aug_u(\La_-;\F).$
\end{remark}

\begin{example}\label{ex:7_4}
    For the Legendrian link $\La^g$ in the checkerboard family for $g\geq 0$, defined in~\cite[Section 8.2]{CLLMPT}, there are $2g$ $SZ$-pinch moves to unknot. Thus, the ruling polynomial of $\La^g$ is $z^{2g}$ and its augmentation variety is the trivial cluster variety $(\C^*)^{2g}$. The Legendrian $\La^0$ is the max-tb unknot, and $\La^1$ is the max-tb $7_4$ knot.
\end{example}

\begin{figure}[!ht]
    \centering
    \includegraphics[width=0.7\linewidth]{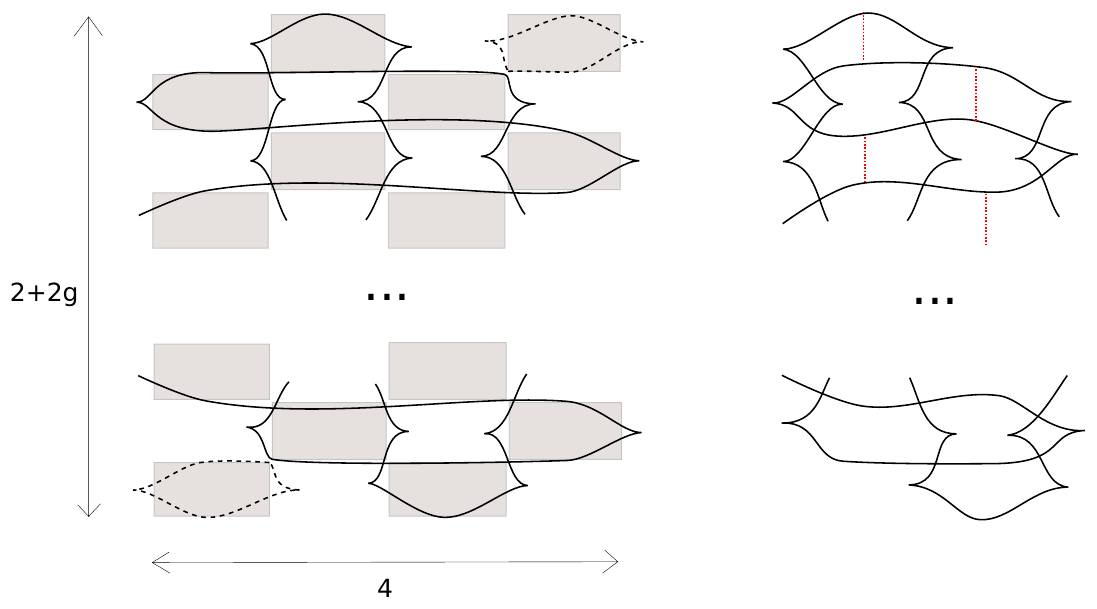}
    \caption{The front projection of $\La^g$ in the checkerboard family with an associated pinching sequence.}
    \label{fig:checkerboard}
\end{figure}

 \begin{figure}[!ht]
 \begin{minipage}{3in}
        \centering
        \labellist
        \small
        \pinlabel $2n$ at 120 25
        \endlabellist
        \includegraphics[width=0.8\linewidth]{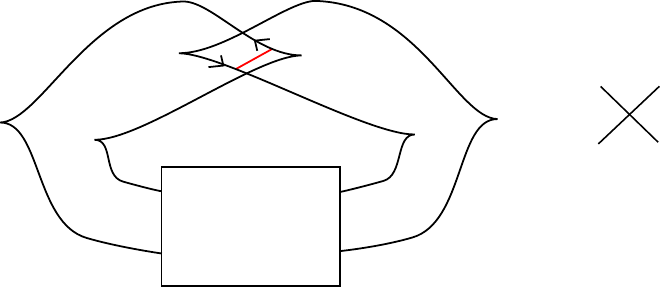}
        \caption{Positive fillable twist knots $K_{2n}$ for $n\geq 0$. The red band indicates the SZ-pinch move.}
        \label{fig:twist_pos}
\end{minipage}
\begin{minipage}{3in}
        \centering
        \labellist
        \small
        \pinlabel $|2+2n|$ at 112 25
        \pinlabel Z at 265 20
        \pinlabel S at 335 20
        \endlabellist
        \includegraphics[width=0.8\linewidth]{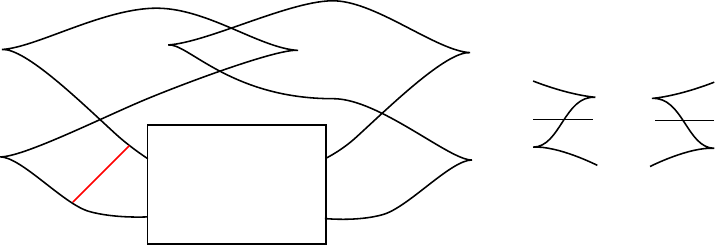}
        \caption{Negative fillable twist knots $K_{2n}$ for $n< 0$. The red band indicates the SZ-pinch move.}
        \label{fig:twist_neg}
         \end{minipage}
    \end{figure}

\begin{example}
Legendrian representatives of max-tb twist knots $K_{2n}$ have been classified~\cite[Theorem 1.1]{Etnyre_Ng_Vertesi_2013} (we follow their notation for positive and negative twist knots). There are two families of twist knots $K_{2n}$ that are exact Lagrangian fillable: $K_{2n}$ for $n\geq 0$ and for $n<0$. Positive fillable twist knots have a single max-tb representative shown in Figure~\ref{fig:twist_pos}, while negative fillable twist knots have $\lfloor \frac{n^2}{2}\rfloor$ max-tb representatives corresponding to different sequences of $|2n+2|$ $S,Z$ crossings as shown in Figure~\ref{fig:twist_neg}.

Gao-Rutherford~\cite{Gao_Rutherford_2021} studied a subfamily of negative fillable twist knots, $K_{2n}$ for $n<0$, and proved that their augmentation variety is isomorphic to the augmentation variety of the Hopf link $H_0$ over $\C$. SZ-pinch moves allow us to replicate this result and improve~\cite[Proposition 4.1]{Gao_Rutherford_2021} without having to explicitly compute the Legendrian contact dg-algebra. There exists a single SZ-pinch move from any max-tb representative $K_{2n}$ for $n<0$ to the Hopf link $H_k$, where $k\in \Z$ equals the difference between the number of $S$ and $Z$ crossings. Therefore, $\A ug(K_{2n})$ is isomorphic to $ Aug(H_k).$ The front of a max-tb positive fillable twist knot $K_{2n}$, $n\geq0$ is shown in Figure~\ref{fig:twist_pos}. A single orientable SZ-pinch move on $K_{2n}$ results in a negative $T(-2,2n)$ Legendrian torus link which only has ungraded augmentations. By Remark~\ref{rem:non-orientable_sz}, the ungraded augmentation variety $\A ug_u(K_{2n})$ is isomorphic to $\A ug_u(T(-2,2n))$ which is algebraically isomorphic to a $A_{2n-1}$ type cluster variety~\cite[Theorem 1.1]{CSHW}.
\end{example}

\section{Realization of ruling polynomials}\label{sec:construction}

 We prove Theorem~\ref{thm:main} in Subsection ~\ref{subsec:graded}  by constructing a Legendrian link $\La_{P(z^2)}$ with graded ruling polynomial $P(z^2)$ for each polynomial $P(z^2)$ with positive integer coefficients.
 We then characterize and realize ungraded ruling polynomials in Subsection~\ref{subsec:ungraded} where we prove Theorem~\ref{thm:non-orientable}.

\subsection{Graded ruling polynomials}\label{subsec:graded}
\begin{proof}[Proof of Theorem~\ref{thm:main}]

Choose $\La_1$ to be the max-tb unknot and $\La_{z^2}$ to be the checkerboard Legendrian $\La^1$ which is also the max-tb Legendrian $7_4$ from Example~\ref{ex:7_4}. By Proposition~\ref{prop:basic}, the ruling polynomial of a disjoint union of two Legendrian links is the product of their ruling polynomials so it suffices to consider polynomials with nonzero constant term.

In Proposition~\ref{lem:n}, we construct Legendrian links $\La_k$ with ruling polynomial $k\geq 1$. In Proposition~\ref{prop:2}, we construct Legendrian links $\La_{kz^{2n}+1}$ with ruling polynomial $kz^{2n}+1$ for $n,k\geq 1$.  To construct Legendrians for general polynomials with non-zero constant term, we define a new operation on fronts called XY-sum, that is only well defined for Legendrians with particular fronts. See Definition~\ref{defn:xytype} for Legendrians with $X$-type and $Y$-type fronts, $\La_X$, and $\La_Y$; see Definition~\ref{defn:sum} for the XY-sum between them, denoted $\La_X \ \natural\ \La_Y$. In particular, $\La_k$ and $\La_{kz^{2n}+1}$ for $k,n\geq 1$ both have $X$-type and $Y$-type fronts. By Proposition~\ref{prop:together}, if the ruling polynomials of $\La_X$ and $\La_Y$ are $p(z^2)+1$ and $q(z^2)+1$ respectively, then the ruling polynomial of $\La_X\ \natural \ \La_Y$ is $p(z^2)+q(z^2)+1$.

Let $P(z^2)=\displaystyle{\sum_{k=0}^N a_k z^{2n_k}}$ for $a_k\in\Z_{>0}$ and $0=n_0<n_1< n_2 < \cdots < n_N$ be an arbitrary polynomial. The Legendrian link $\La_{P(z^2)}$ is $$\La_{a_0}\ \natural \ \La_{a_1z^{2n_1}+1}\ \natural\ \La_{a_2z^{2n_2}+1}\  \natural \cdots \natural\  \La_{a_Nz^{2n_N}+1}.$$
\end{proof}

\begin{proposition}
\label{lem:n}
The Legendrian $\La_k$ for $k>1$ whose front is shown in Figure~\ref{fig:n} has ruling polynomial $R_{\La_k}(z)=k$.
\end{proposition}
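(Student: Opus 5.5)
We cannot see Figure~\ref{fig:n}, so the plan has to rest on what the front is presumably built to do. The natural guess is that $\La_k$ generalizes $\La_2$, the max-tb $m(9_{46})$ of Example~\ref{ex:946}, whose augmentation variety is two points and whose graded ruling polynomial should therefore be the constant $2$. Constant ruling polynomial $k$ means exactly $k$ graded normal rulings, each with $\chi(\rho)=|\pi_0(\La_k)|$. In particular, every ruling disk must contain exactly one switch per extra disk beyond the number of components, i.e.\ $d(\rho)-s(\rho)=|\pi_0(\La_k)|$.

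The strategy is a direct enumeration of graded normal rulings on the fixed front.

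First, fix a Maslov potential on each component and compute the degree $|c|=\mu(c_u)-\mu(c_l)$ of every crossing. Only degree-$0$ crossings can be switches, so this cuts the search down to a small set of candidate switches, presumably arranged as $k$ repeated ``blocks'' (the $b$-type crossings in the $9_{46}$ picture).

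Next, working left to right, I would determine how ruling disks can be paired at each left cusp and propagate the pairing through crossings. Each candidate switch is allowed only in the normal configurations of Figure~\ref{fig:ruling_switches}. I expect two things to happen. Interlacing constraints should force that any ruling switches at all the candidate crossings in exactly one block and at no others. Conversely, each of the $k$ choices of block should give a valid ruling. For each such ruling $\rho_i$ I would count disks and switches and check that $\chi(\rho_i)=|\pi_0(\La_k)|$, so that each contributes $z^0$. Summing the $k$ contributions gives $R_{\La_k}(z)=k$.

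An alternative is to use Proposition~\ref{prop:szpinch}(2) to reduce the count. If SZ-pinch moves that merge components relate $\La_k$ to a simpler front, the ruling polynomial is unchanged, so one can pinch down to a front where the enumeration is easier. One could also induct on $k$ if the front is a chain of repeated pieces, showing that adding a piece adds exactly one ruling.

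The main obstacle is the exclusion step: proving that no other switch sets (mixed blocks, partial blocks, or the empty set) give a normal ruling. This needs a careful case analysis of which disk pairs meet at each candidate crossing, showing that every non-allowed choice violates normality somewhere. I would first handle the empty switch set, then single switches, to set up the pattern, and then induct on the number of blocks.
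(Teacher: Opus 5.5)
\textbf{Gap: the proposal is a generic enumeration plan, and the one structural prediction it makes is wrong for $\La_k$.} More importantly, it never supplies the mechanism that produces the number $k$. So the ``exclusion step'' you flag as the main obstacle has nothing to drive it.

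The front in Figure~\ref{fig:n} consists of $k-1$ stacked layers. Each layer is a copy of a subfront of $m(9_{46})$, and consecutive layers are joined by a small modification at an orange line. A graded ruling does not switch in exactly one block and nowhere else. It switches in \emph{every} layer, using in each layer either the red-type or the blue-type switch pattern of $m(9_{46})$ from Figure~\ref{fig:9_46}. In particular, within a layer it also does not switch at all the candidate degree-$0$ crossings.

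The missing idea is the transfer rule across an orange line. If a layer is of red type, the layer below it may be red or blue. If a layer is of blue type, the layer below must be blue. Rulings of $\La_k$ are therefore in bijection with monotone colorings $R\cdots RB\cdots B$ of the $k-1$ layers, and there are exactly $k$ of these. Each is a disk ruling, so each contributes $z^{0}$, and $R_{\La_k}(z)=k$. The count $k$ comes from $k-1$ binary choices constrained to be monotone, not from choosing one of $k$ blocks.

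Your fallback, ``induct on $k$, showing that adding a piece adds exactly one ruling,'' has the right shape and is essentially the paper's argument. The paper later formalizes it as Lemma~\ref{lem:stabilization} with $\La_k=S(\La_{k-1})$. Appending a bottom layer gives two extensions of the unique ruling whose bottom layer is red. It extends each blue-bottom ruling in exactly one way. Hence $R_{\La_k}=R_{\La_{k-1}}+1$. However, ``adds exactly one ruling'' is the entire content of the proof, and it rests on the local check at the orange line, which your proposal does not identify.

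The base case also cannot be inferred from $\A ug(\La_2)$ being two points. You have to check on the front that $m(9_{46})$ has exactly two graded normal rulings, both disk rulings.

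The SZ-pinch route is not needed. You also exhibit no component-splitting SZ configuration in $\La_k$ to which Proposition~\ref{prop:szpinch} could be applied.
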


\begin{proof}
For $k=2$, we choose $\La_2$ to be the max-tb $m(9_{46})$ whose front is shown in Figure~\ref{fig:9_46}.  It has two disk rulings, that we refer to as rulings with red and blue type switches. To construct $\La_3$, we stack the pattern of a subfront of $m(9_{46})$ twice with a small modification near the orange line, as shown in Figure~\ref{fig:3}. If the switches of a ruling of $\La_3$ in the upper layer are of red type, the corresponding switches in the lower layer can be of red or blue type. However, if the switches in the upper layer are of blue type, the corresponding switches in the lower layer must be of blue type. Therefore, $\La_3$ has three disk rulings. This construction can be generalized to obtain $\La_k$ which has $k-2$ additional layers as shown in Figure~\ref{fig:n}. Each iteration increases the number of disk rulings by one, resulting in $k$ disk rulings for $\La_k$. 

 \begin{figure}[!ht]
     \begin{minipage}{3in}
        \centering
        \includegraphics[width=3in]{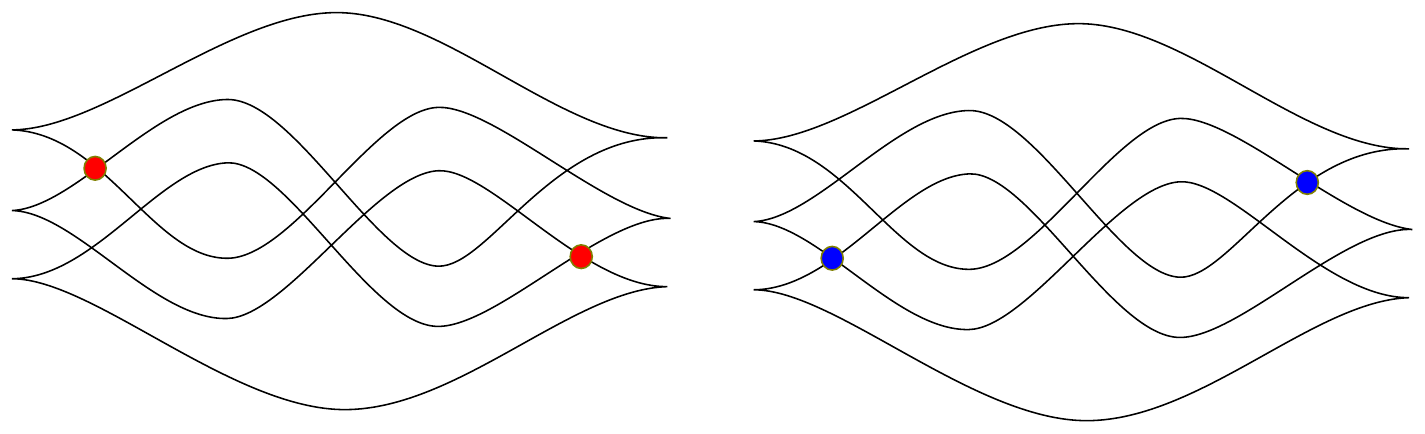}
        \caption{Two rulings of $m(9_{46})$. We say the ruling with red (blue) switches is of red (blue) type.}
        \label{fig:9_46}
    \end{minipage}
    \begin{minipage}{3in}
          \centering
        \labellist
        \pinlabel $k-2$ at 340 150
        \endlabellist
        \includegraphics[width=2in]{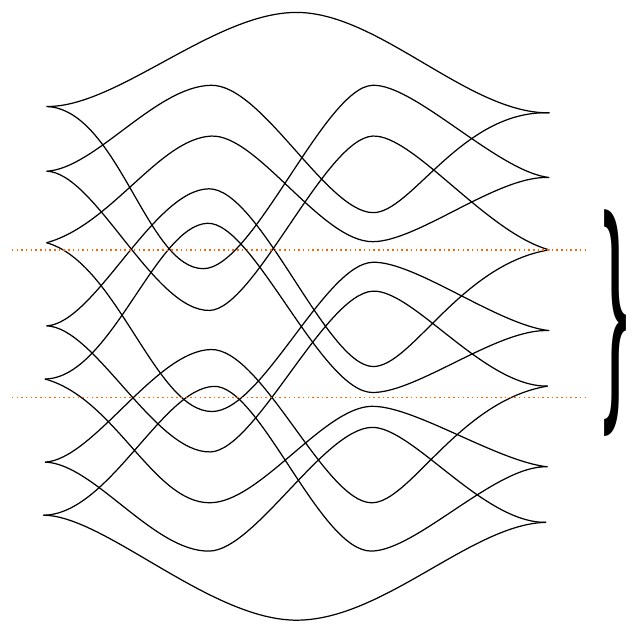}
        \caption{The front of $\La_k$ with $k-2$ orange lines.}
        \label{fig:n}
    \end{minipage}
    \end{figure}

\end{proof}
 \begin{figure}[!ht]
        \centering
        \includegraphics[width=0.8\linewidth]{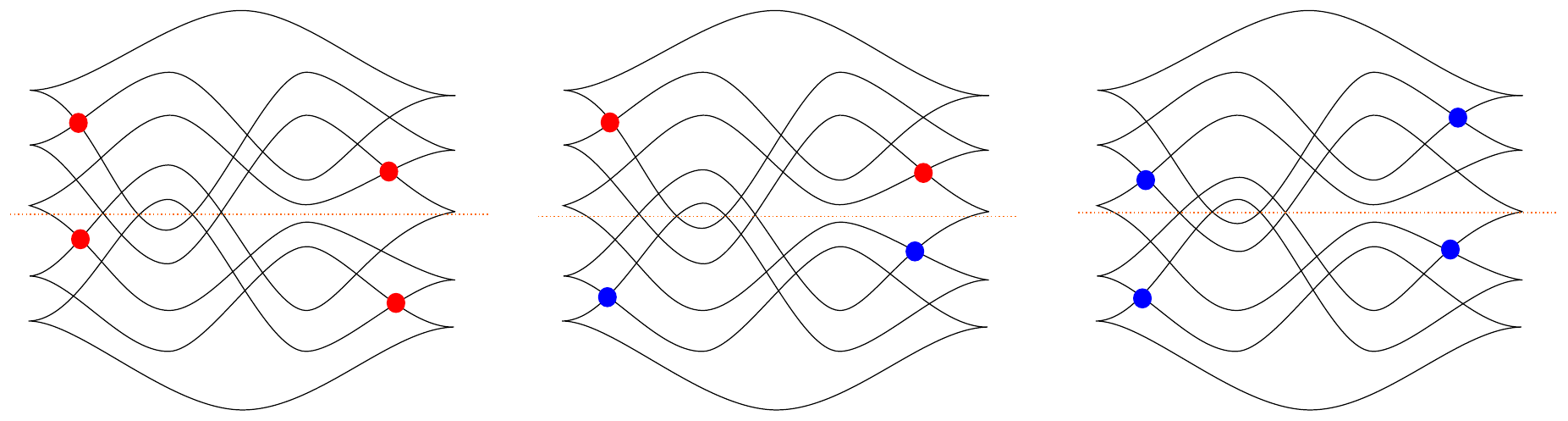}
        \caption{Three rulings of $\La_3$.}
        \label{fig:3}
    \end{figure}

We now define the first operation on fronts used to construct $\La_{P(z^2)}$. Note that the Legendrian link type after a crossed connect sum depends on the location of the operation.

\begin{definition}\label{defn:local_connect_sums}
Let $\La_1,\La_2\subset \R^3_{std}$ be two Legendrian links with  fronts $D_1, D_2$. After a Legendrian isotopy, assume that $D_1, D_2$ have subfronts as shown on the leftmost diagram in Figure~\ref{fig:local_moves}. A \textbf{crossed connect sum} of $\La_1$ and $\La_2$, $\La_1\sharp \La_2$, is the Legendrian whose front is given by modifying $D_1\sqcup D_2$ as shown in the rightmost diagram of Figure~\ref{fig:local_moves}. 
\end{definition}

\begin{figure}[H]
 \begin{tikzpicture}[scale=1]
\node[inner sep=0] at (0,0){\includegraphics[width=0.4\textwidth]{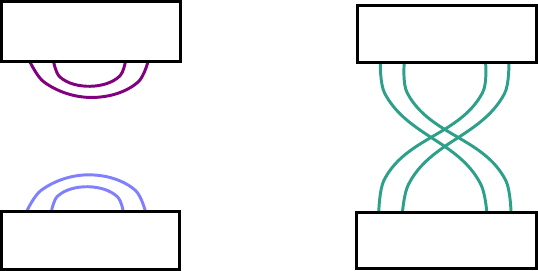}};
\node at (0,0.5) {{$\sharp$}};
\node at (0,0) {{$\rightarrow$}};
\node at (-2,1.2) {{$\La_1$}};
\node at (-2,-1.2) {{$\La_2$}};
\node at (2.1,1.2) {{$\La_1$}};
\node at (2.1,-1.2) {{$\La_2$}};
\node at (4, 0) {{$\La_1\sharp\La_2$}};
\end{tikzpicture}
\caption{A crossed connect sum of two Legendrians $\La_1, \La_2$.}
\label{fig:local_moves}

\end{figure}

\begin{proposition}\label{prop:2}
Let $n,k\geq 1$. There exists a Legendrian link $\La_{kz^{2n}+1}$ whose ruling polynomial is equal to $kz^{2n}+1.$    
\end{proposition}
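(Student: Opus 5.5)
The plan is to build $\La_{kz^{2n}+1}$ from pieces whose ruling polynomials are already known, combining a crossed connect sum (Definition~\ref{defn:local_connect_sums}) with the SZ-pinch calculus of Proposition~\ref{prop:szpinch}. The guiding idea comes from $\La_k$ in Proposition~\ref{lem:n}, which has exactly $k$ disk rulings distinguished by the ``red/blue'' switch patterns in its stacked layers. We want a front with two kinds of rulings:
\begin{itemize}
\item one ruling of Euler characteristic $|\pi_0|$, which contributes the term $1$;
\item exactly $k$ rulings, each with $2n$ extra switches relative to a disk-type ruling, which contribute $kz^{2n}$.
\end{itemize}

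I would proceed in two steps, first for $n=1$ and then increasing $n$ by SZ-pinch moves.

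\textbf{The case $n=1$.} Take a crossed connect sum of $\La_k$ with a Hopf-type piece such as $H_0$ (ruling polynomial $z^2+1$), or with a copy of the checkerboard-type subfront. The local crossings introduced by the crossed connect sum should have degrees chosen so that the following dichotomy holds:
\begin{itemize}
\item either the new crossings are \emph{not} switched, and then the rulings of $\La_k$ are forced into a single configuration (for instance, all layers are forced to be of blue type, as in the forcing argument in the proof of Proposition~\ref{lem:n});
\item or the new degree-zero crossings \emph{are} switched, and then the $k$ rulings of $\La_k$ all extend, each with two extra switches.
\end{itemize}

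I would verify this by enumerating rulings locally near the connect-sum region. The normality conditions of Figure~\ref{fig:ruling_switches} and the grading restriction (switches only at degree-$0$ crossings) propagate from the connect-sum crossings into the layers of $\La_k$. Each ruling is then either the unique ``unswitched'' ruling or one of $k$ switched ones. Counting switches and disks via Definition~\ref{defn:ruling_polynomial} gives $R(z)=kz^2+1$.

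\textbf{Raising the exponent.} To pass from $z^2$ to $z^{2n}$, I would not simply split off factors, since $z^{2n-2}$ multiplies the entire polynomial and would also change the constant term. Instead I would do an iterated crossed connect sum along the same local region: concatenate $n$ copies of the connect-sum gadget in series. The design should ensure that the gadgets are forced to be all switched or all unswitched together, so that the switched rulings acquire $2n$ extra switches. Equivalently, the switched rulings should correspond to a filling obtained from the unswitched one by $2n$ SZ-pinch-type switches. One could check this using Proposition~\ref{prop:szpinch}, by applying SZ-pinches to the switched branch only.

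\textbf{Main obstacle.} The hardest step is the rigidity claim: that the rulings really split cleanly into one unswitched ruling and exactly $k$ switched ones, with no mixed rulings (some gadgets switched, others not). It must also be checked that the $k$-fold choice in $\La_k$ survives only in the switched branch.

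To handle this, I would first fix a Maslov potential so that only the intended crossings have degree $0$. Then I would analyze the ruling disks crossing the orange lines from top to bottom, using the same upper-layer-forces-lower-layer argument as in Proposition~\ref{lem:n}. If mixed rulings appear, I would add degree-nonzero crossings to the gadget so that a partial switching pattern violates normality.
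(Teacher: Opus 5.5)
There is a genuine gap. You never exhibit a front, and the property the whole proposition rests on appears only as a design requirement. Your plan needs a gadget whose unswitched state forces $\La_k$ into a single ruling while its switched state leaves all $k$ rulings of $\La_k$ free. It also needs the $n$ gadgets to be switched all together or not at all. But the text only says the degrees ``should be chosen'' so that this happens, and that if mixed rulings appear you ``would add'' further crossings. Nothing is enumerated, so the count $kz^{2n}+1$ is never obtained.

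The coupling you ask for is also awkward. It requires one ruling of $\La_k$ (the all-blue one) to survive in both branches and the other $k-1$ rulings to survive in only one. The crossed connect sums in the paper's constructions instead transmit a red/blue type from one piece to the next, so they partition rulings rather than duplicate one.

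The SZ-pinch fallback cannot work either. Proposition~\ref{prop:szpinch} relates the whole ruling polynomials, $R_{\La_+}=z^2R_{\La_-}$ or $R_{\La_+}=R_{\La_-}$. So an SZ-pinch cannot add switches ``to the switched branch only'', as you yourself observed a paragraph earlier.

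The missing idea is to start from $\La_{k+1}$ rather than $\La_k$. Its $k+1$ rulings already split as $1+k$ according to the type of the top layer. By the layer-forcing argument of Proposition~\ref{lem:n}, a blue top layer forces every layer to be blue, giving one ruling. A red top layer leaves exactly $k$ admissible patterns on the remaining layers.

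The paper then performs crossed connect sums of $\La_{k+1}$ with the Hopf link and $n$ copies of $\La_2$, arranged so that the type of that top layer dictates the type of every $\La_2$. A blue top layer forces all copies of $\La_2$ to be of red type, and a red top layer forces them all to be of blue type. The resulting link has $n+1$ components. The unique ruling of the first kind has $\chi=n+1$ and gives the constant term. The $k$ rulings of the second kind have $\chi=1-n$ and give $kz^{2n}$. Because the coupling is a bijection of types, mixed rulings never arise; this is exactly the rigidity your plan leaves unproved.
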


\begin{proof}

We construct $\La_{kz^{2n}+1}$ by performing multiple crossed connect sums between $\La_{k+1}$, the Hopf link, and $n$ disjoint copies of $\La_2$ as shown in Figure~\ref{fig:kzn+1}. Note that $\La_{kz^{2n}+1}$ has $n+1$ components. It has two  types of rulings $\rho$:
\begin{enumerate}
    \item If the switches of $\rho$ on the top layer of $\La_{k+1}$ are of blue type, they are of blue type on all other layers of $\La_{k+1}$, and are of red type on all the other $\La_2$ as shown in part $(b)$ of Figure~\ref{fig:kzn+1}. In this case $\chi(\rho)=n+1$ and thus this ruling corresponds to the term $1$ in the ruling polynomial.
    \item If the switches of $\rho$ on the top layer of $\La_{k+1}$ are of red type, they can be of either of blue or red type on the second top-most layer of $\La_{k+1}$, and the switches on all other $\La_2$ are of blue type as shown in part $(a)$ of Figure~\ref{fig:kzn+1}. In this case $\chi(\rho)=1-n$ and there are $k$ many rulings of this type, each contributing a $z^{2n}$ to the ruling polynomial.
\end{enumerate}
Thus, the ruling polynomial of $\La_{kz^{2n}+1}$ is $kz^{2n}+1$.
\end{proof}

\begin{figure}[!ht]
    \centering
    \labellist
    \tiny
    \pinlabel $1$ at  250 150
    \pinlabel $1$ at 250 130
    \pinlabel $1$ at  0 155
    \pinlabel $1$ at -0 135
    \pinlabel $1$ at  405 150
    \pinlabel $1$ at 405 130
    \pinlabel $1$ at  695 150
    \pinlabel $1$ at 695 130
    \pinlabel $1$ at  440 155
    \pinlabel $1$ at 440 135
    \pinlabel $1$ at  845 150
    \pinlabel $1$ at 845 130
    \pinlabel $k-1$ at  125 85
    \pinlabel $k-1$ at  570 85
    \pinlabel $(a)$ at 130 0
    \pinlabel $(b)$ at  600 0
    \pinlabel $n$ at 265 40
    \pinlabel $n$ at 710 40
    \endlabellist
    \includegraphics[width=1\linewidth]{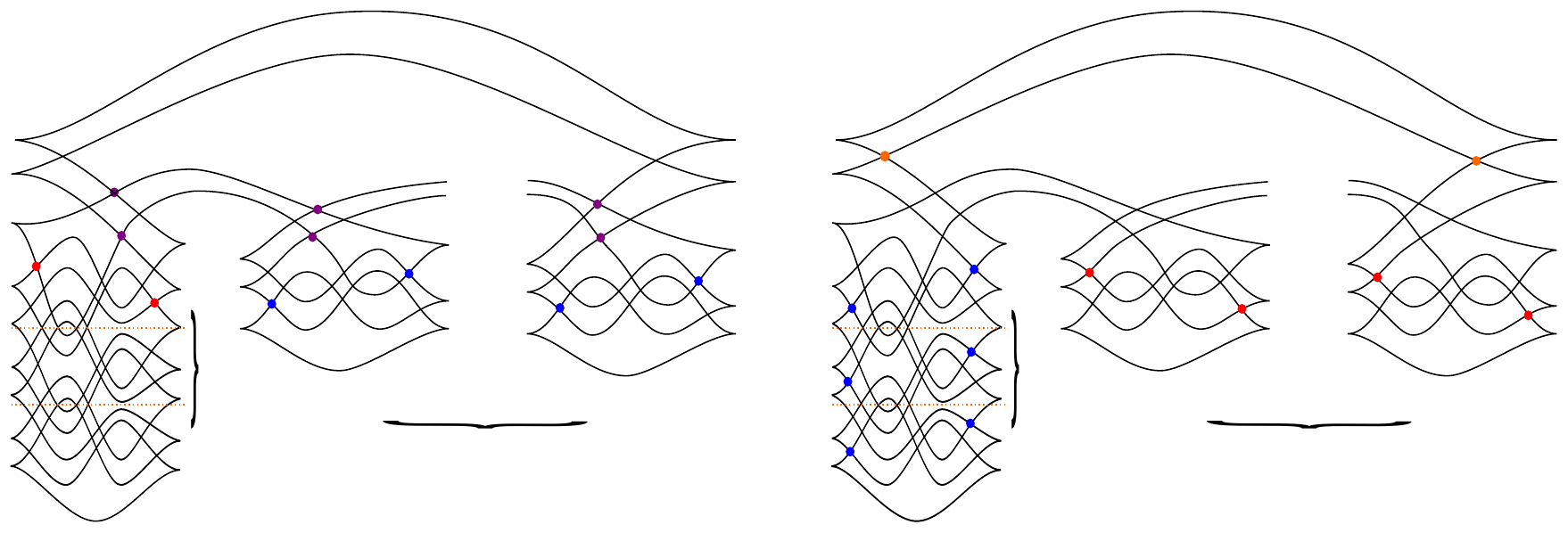}
    \vspace{0.05in}
    
    \caption{Two types of rulings of $\La_{kz^{2n}+1}.$}
    \label{fig:kzn+1}
\end{figure}

Our next operation only works for a restricted class of fronts.

\begin{definition}\label{defn:xytype}
Call a Legendrian front an X-type front, $\La_X$, if it has a subfront of the form shown in the top of Figure~\ref{fig:XYtype} with the following conditions:
\begin{enumerate}
    \item The subfront has the Maslov potential shown in the figure, rulings of red type, and possibly rulings of blue type;
    \item The ruling with red type switches contributes to $1$ in the ruling polynomial.
\end{enumerate}

We call a Legendrian front a $Y$-type front, $\La_Y$, if it has a subfront of the form shown in the bottom of Figure~\ref{fig:XYtype} with the following conditions:
\begin{enumerate}
    \item The top two strands belong to the same connected component, the subfront has the Maslov potential shown in the figure, and has two possible types of rulings, one of which we say is of orange type;
    \item The ruling labeled by orange type switches contributes to $1$ in the ruling polynomial.

\end{enumerate} 

\end{definition}

\begin{figure}[!ht]
    \centering
    \labellist
    \tiny
    \pinlabel $3$ at  15 185
    \pinlabel $2$ at 15 168
    \pinlabel $3$ at 125 185
    \pinlabel $2$ at 125 168
    \pinlabel $1$ at 15 145
    \pinlabel $1$ at 30 75
    \pinlabel $0$ at 40 62
    \pinlabel $X$ at  65 210
    \pinlabel $X$ at 270 210
    \pinlabel $Y$ at 70 20
    \pinlabel $Y$ at  270 20
    \pinlabel $(A)$ at -30 180
    \pinlabel $(B)$ at  -30 30
    \endlabellist
    \includegraphics[width=0.6\linewidth]{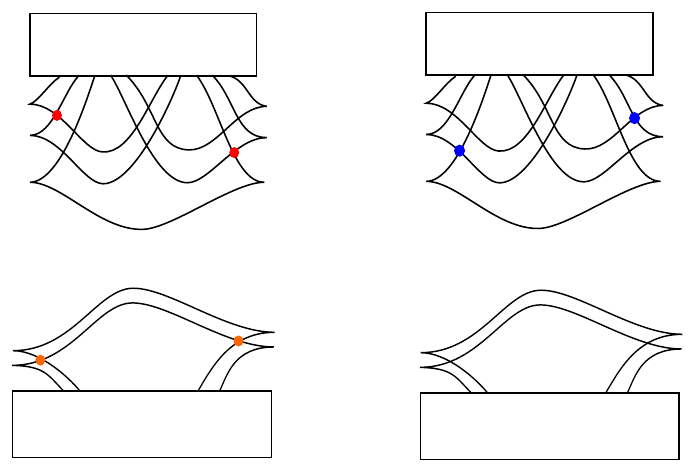}
    \vspace{0.1in}

    \caption{$(A)$ An X-type front; $(B)$ a Y-type front.} 
    \label{fig:XYtype}
\end{figure} 

\begin{figure}[!ht]
\includegraphics[width=0.2\linewidth]{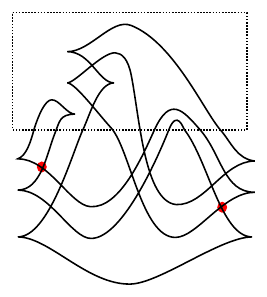}
\caption{An $X$-type front of the unknot $\La_1$.}
\label{fig:Xunknot}
\end{figure}

\begin{remark}
The Legendrians $\La_k,$ and $\La_{kz^{2n}+1}$ for $n,k\ge 1$ are X-type fronts.
The unknot $\La_1$ has an $X$-type front as shown in Figure~\ref{fig:Xunknot} with only one ruling of red type. For $k>1$, consider the bottom layer of $\La_k$ which can have rulings with red and blue type switches.  For $\La_{kz^{2n}+1}=\La_k\sharp\La_2\sharp\La_2$, $k,n\ge 1$, we choose the rightmost $\La_2$ factor.  The Legendrian fronts of  $\La_{kz^{2n}+1}$ are also $Y$-type fronts if you consider the upper portion of the front. 
\end{remark}

\begin{definition}\label{defn:stabilization}
For an $X$-type Legendrian front $\La_X$, the \textbf{stabilization} $S(\La_X)$ is the Legendrian obtained by modifying the $\La_X$ subfront to the subfront whose Lagrangian resolution is shown in Figure~\ref{fig:slag}.
\end{definition}

\begin{remark} The  stabilization of $\La_{k-1}$, $S(\La_{k-1})$, is $\La_k$ for $k\geq 3$.
\end{remark}

\begin{lemma}\label{lem:stabilization}
Let $\La_X$ be an $X$-type front Legendrian link and let $S(\La_X)$ be its stabilization. Let $R$ (resp. $B$) denote the rulings of $\La_X$ whose bottom layer switches are of red (resp. blue) type. 
Define
$r_R(z):=\displaystyle\sum_{\rho\in R}z^{-\chi(\rho)}$.
Then, 
$$R_{S(\La_X)}(z)=z^{|\pi_0(\La_X)|}r_R(z)+R_{\La_X}(z).$$
\end{lemma}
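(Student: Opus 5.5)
The plan is to compare rulings of $S(\La_X)$ with rulings of $\La_X$ locally. Since $S(\La_X)$ differs from $\La_X$ only inside the $X$-type subfront, any ruling of $S(\La_X)$ restricts to a ruling outside this region that is also a partial ruling of $\La_X$. So the count reduces to understanding how rulings can be completed through the new subfront. The model is the passage from $\La_2$ to $\La_3$ in Proposition~\ref{lem:n}. There, stacking a new layer of the $m(9_{46})$ pattern below the old bottom layer forces the following behaviour. If the old bottom layer has blue-type switches, the new layer must also be blue. If the old bottom layer has red-type switches, the new layer may be red or blue.

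\textbf{Step 1: local enumeration.} I will enumerate the rulings of the new subfront of $S(\La_X)$ shown in Figure~\ref{fig:slag}, using the graded crossings and the Maslov potential fixed in Figure~\ref{fig:XYtype}. The goal is to show there are exactly three possible local completions.
\begin{enumerate}
\item An old blue-type bottom layer extends uniquely to a new blue layer.
\item An old red-type bottom layer extends to a new red layer.
\item An old red-type bottom layer extends to a new blue layer.
\end{enumerate}
This is a finite case check using the normality conditions of Definition~\ref{defn:ruling} at each degree-$0$ crossing.

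\textbf{Step 2: Euler characteristic bookkeeping.} I will define a map from rulings of $S(\La_X)$ to rulings of $\La_X$ by forgetting the new layer.
\begin{itemize}
\item In cases (1) and (2), I will check that the extra switches and the extra ruling disks cancel. Then $\chi$ is preserved, and these rulings contribute exactly $R_{\La_X}(z)$, since every ruling of $\La_X$ lies in $R$ or in $B$.
\item In case (3), the switch pattern differs from that of the $\La_X$ ruling. I will count the change in $d(\rho)-s(\rho)$ and aim to show that $z^{|\pi_0(S(\La_X))|-\chi}$ equals $z^{|\pi_0(\La_X)|}\cdot z^{|\pi_0(\La_X)|-\chi(\rho)}$. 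The normalization in the statement implies that the defining prefactor of $R_{S(\La_X)}$ multiplies $r_R(z)$ by exactly $z^{|\pi_0(\La_X)|}$. For $\La_2\to\La_3$ and for $\La_k\to\La_{k+1}$ this is consistent with the disk-ruling count, because there $r_R=z^{-1}$ and the exponent works out to $0$.
\end{itemize}
I also need to track how $|\pi_0|$ changes under stabilization, reading the new component structure off Figure~\ref{fig:slag}.

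\textbf{Main obstacle.} The hardest step is this exponent calculation in case (3), together with confirming that the red-to-blue transition cannot interact with switches elsewhere in $\La_X$. The $X$-type condition should handle the interaction issue, because the subfront is attached to the rest of the front only through the strands with Maslov potentials $3,2,1$. I will first verify the formula on $\La_1\to S(\La_1)$, then on $\La_2\to\La_3$, and then extract the general switch and disk counts.
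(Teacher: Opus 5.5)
Your Step 1 is the paper's argument. The only local completions are (old, new) $=$ (red, red), (red, blue), (blue, blue), so each $\rho\in R$ extends in two ways and each $\rho\in B$ in one. The paper concludes $R_{S(\La_X)}(z)=z^{|\pi_0(S(\La_X))|}\bigl(2r_R(z)+r_B(z)\bigr)$ and uses $|\pi_0(S(\La_X))|=|\pi_0(\La_X)|$.

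The problem is Step 2, case (3). You aim to show that the red-to-blue extension $\rho'$ of $\rho\in R$ contributes $z^{|\pi_0(\La_X)|}\cdot z^{|\pi_0(\La_X)|-\chi(\rho)}$, i.e.\ that $\chi(\rho')=\chi(\rho)-|\pi_0(\La_X)|$. This is false, and it comes from misreading the statement. The sum $r_R(z)=\sum_{\rho\in R}z^{-\chi(\rho)}$ carries no normalizing prefactor, so $z^{|\pi_0(\La_X)|}r_R(z)=\sum_{\rho\in R}z^{|\pi_0(\La_X)|-\chi(\rho)}$ is just the red part of $R_{\La_X}(z)$ counted a second time. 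Two consequences show your target cannot hold:
\begin{enumerate}
\item For the knots $\La_k$ it would give $R_{\La_{k+1}}(z)=k+z$ instead of $k+1$.
\item It contradicts the parity observation in the introduction. Both $|\pi_0(S(\La_X))|-\chi(\rho')$ and $|\pi_0(\La_X)|-\chi(\rho)$ are even for graded rulings, so your target forces $|\pi_0(\La_X)|$ to be even, which fails for every knot.
\end{enumerate}
Your own sanity check (``$r_R=z^{-1}$ and the exponent works out to $0$'') is computed with the correct reading, not the displayed target. So the proposal is inconsistent exactly here.

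The fix removes your ``main obstacle'' entirely. Case (3) is no different from cases (1) and (2): in all three, $\chi(\rho')=\chi(\rho)$.
\begin{itemize}
\item $d(\rho)$ is the number of left cusps, which the stabilization raises by the same amount for every ruling.
\item The new layer carries the same number of switches whether it is red or blue type, since both rulings of $m(9_{46})$ are disk rulings of the same front.
\item The net change in $d-s$ is therefore the same in all three cases, and it is zero, as $\La_k\to\La_{k+1}$ (all rulings disks) shows.
\end{itemize}
With $|\pi_0(S(\La_X))|=|\pi_0(\La_X)|$ this gives $z^{|\pi_0(\La_X)|}(2r_R+r_B)=z^{|\pi_0(\La_X)|}r_R+R_{\La_X}$. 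With this correction your argument coincides with the paper's.
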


\begin{proof}
Define $r_B(z):=\displaystyle\sum_{\rho\in B}z^{-\chi(\rho)}$. 
By definition, $R_{\La_X}(z)=z^{|\pi_0(\La_X)|}(r_R(z)+r_B(z))$. It is straightforward to check that if $S(\La_X)$ has a ruling whose bottom layer is of red type, the layer above it must also be of red type. If $S(\La_X)$ has a ruling whose bottom layer is of blue type, the layer above it can be red or blue type. Thus, $R_{S(\La_X)}(z)=z^{|\pi_0(S(\La_X))|}(2r_R(z)+r_B(z))$. The claim follows from the fact that stabilizing does not change the number of components.
\end{proof}

\begin{definition}\label{defn:sum}
Given an X-type front $\La_X$ and a Y-type front $\La_Y$, the \textbf{XY-sum} $\La_X\ \natural \ \La_Y$ is the Legendrian front obtained by performing a crossed connect sum between $S(\La_X)$, $\La_Y$, and $\La_2$ as shown in Figure~\ref{fig:sum} .
\end{definition}

\begin{figure}[!ht]
    \centering
    \labellist
    \tiny
    \pinlabel $X$ at  60 145
    \pinlabel $Y$ at  200 72
    \pinlabel $X$ at  330 145
    \pinlabel $Y$ at  480 72
    \pinlabel $(A)$ at 150 -10
    \pinlabel $(B)$ at  410 -10
    \endlabellist
    \includegraphics[width=1\linewidth]{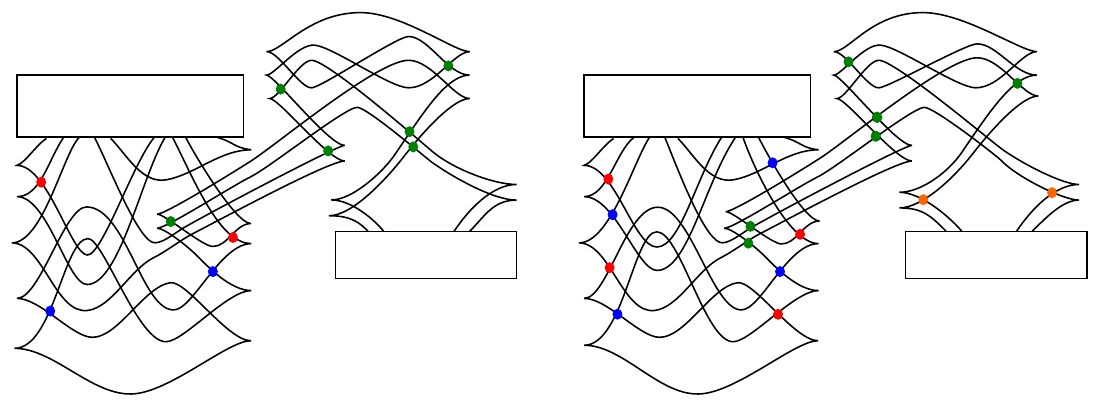}
    \vspace{0.05in}

    \caption{Three types of rulings contribute to $R_{\La_X \ \natural \  \La_Y}(z^2)=p(z^2)+q(z^2)+1$. $(A)$ the rulings contributing to $q(z^2)$ in the ruling polynomial. $(B)$ shows two types of rulings overlapped: the ruling given by red,  green, and orange switches contributes to $1$, while the rulings given by blue, green, and orange switches contribute to $p(z^2)$.}
    \label{fig:sum}
\end{figure} 
\begin{proposition}\label{prop:together}
Given an X-type Legendrian front $\La_X$ with ruling polynomial $p(z^2)+1$ and a Y-type Legendrian front $\La_Y$ with ruling polynomial $q(z^2)+1$ for polynomials $p(z^2),q(z^2)$, the ruling polynomial of $\La_X \ \natural\ \La_Y$ is $p(z^2)+q(z^2)+1$.
\end{proposition}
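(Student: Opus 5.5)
The plan is to enumerate the graded normal rulings of $\La_X\,\natural\,\La_Y$ by analyzing the inserted copy of $\La_2$ and the two crossed connect sum regions in Figure~\ref{fig:sum}, and then to match each ruling with a ruling of $S(\La_X)$ or of $\La_Y$. The inserted $\La_2$ has exactly two disk rulings, of red or blue type (Example~\ref{ex:946}, Figure~\ref{fig:9_46}). Near the connect sum regions the Maslov potentials are fixed by Definition~\ref{defn:xytype}, so only a few crossings there have degree $0$. The first step is to check locally that the switch type chosen on the inserted $\La_2$ determines the admissible behavior on the $X$-subfront of $S(\La_X)$ and on the $Y$-subfront of $\La_Y$. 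The normality conditions of Figure~\ref{fig:ruling_switches} at the crossings where the fronts are joined should force the following dichotomy.

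\begin{enumerate}
\item If the inserted $\La_2$ has one type (case (A)), then $S(\La_X)$ is forced to use its red-type configuration on the stabilized layers. Meanwhile $\La_Y$ is free, except that it must avoid its orange-type ruling.
\item If the inserted $\La_2$ has the other type (case (B)), then $\La_Y$ must use its orange-type ruling. Meanwhile $S(\La_X)$ may use any ruling whose stabilized layers are compatible, which in the stabilization means the top stabilized layer has red type or blue type.
\end{enumerate}

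I would verify this by a finite case check on the local pictures. I expect this case check to be the main obstacle: one must confirm that no ruling disk passes through a connect sum region in an unexpected way, and that no extra degree-$0$ crossing admits a switch.

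The second step is to compute Euler characteristics. A crossed connect sum adds crossings whose switch pattern changes $\chi$ and the number of components in a controlled way. I would track $|\pi_0|$ together with $d(\rho)-s(\rho)$ so that each normalized contribution $z^{|\pi_0|-\chi(\rho)}$ equals the contribution of the corresponding ruling of the piece it came from.

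\begin{enumerate}
\item In case (A), the Euler characteristic exactly reproduces the rulings of $\La_Y$ other than the orange one, which contribute $q(z^2)$.
\item In case (B), fixing orange on $\La_Y$ and the appropriate color on $\La_2$ contributes a factor of $1$, consistent with condition (2) of a $Y$-type front. What remains is a sum over rulings of $S(\La_X)$ with one prescribed layer.
\end{enumerate}

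The third step uses Lemma~\ref{lem:stabilization}. In its proof, rulings of $S(\La_X)$ split according to the types of the bottom two layers: (red, red) comes from $R$; (blue, red) and (blue, blue) come from $B$ and from $R$ respectively. The case (B) compatibility selects exactly one copy of each ruling of $\La_X$. This gives the total $R_{\La_X}(z)=p(z^2)+1$, where the red-type ruling, by condition (2) of an $X$-type front, gives the $1$ and the blue-type rulings give $p(z^2)$. This matches the caption of Figure~\ref{fig:sum}.

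Summing cases (A) and (B) yields $q(z^2)+p(z^2)+1$. The key point is that the case check shows the two cases are disjoint and exhaustive. That completes the proof modulo the local verification in the first step.
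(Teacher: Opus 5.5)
Your skeleton matches the paper's proof. You split the rulings of $\La_X\,\natural\,\La_Y$ by whether the $\La_Y$ part is the orange ruling. With orange, the $X$ part is free; otherwise it is forced to be the red ruling. Since contributions multiply, the total is $(p(z^2)+1)\cdot 1+1\cdot q(z^2)$.

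The paper makes the multiplicativity explicit with two identities, and your Step 2 should state them instead of appealing to a ``controlled'' change:
\[
|\pi_0(\La_X\,\natural\,\La_Y)|=|\pi_0(\La_X)|+|\pi_0(\La_Y)|-1,\qquad \chi(\rho)=\chi(X_\rho)+\chi(Y_\rho)-1 .
\]
Together they show that $z^{|\pi_0|-\chi(\rho)}$ is the product of the normalized contributions of $X_\rho$ and $Y_\rho$. The paper also indexes glued rulings directly by pairs $(X_\rho,Y_\rho)$ of rulings of $\La_X$ and $\La_Y$. It implicitly takes the stabilization layer and the inserted $\La_2$ to be determined by the pair, so it never counts rulings of $S(\La_X)$.

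Your detour through $S(\La_X)$ is where the argument does not hold together. By Lemma~\ref{lem:stabilization} and condition (2) for $X$-type fronts,
\[
R_{S(\La_X)}(z)=z^{|\pi_0(\La_X)|}r_R(z)+R_{\La_X}(z)=p(z^2)+2 .
\]
So the red ruling of $\La_X$ has two lifts: the old layer is red and the new bottom layer is red or blue. Each blue-type ruling has one lift, with both layers blue. In your ordering (new layer, old layer), $(\text{blue},\text{red})$ comes from $R$ and $(\text{blue},\text{blue})$ from $B$; you have these reversed.

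More seriously, your case (B) condition, ``the top stabilized layer has red type or blue type,'' excludes nothing. Read literally, case (B) would contribute all of $R_{S(\La_X)}$, and the total would be $p(z^2)+q(z^2)+2$. Step 3's claim that case (B) ``selects exactly one copy of each ruling of $\La_X$'' therefore does not follow from anything you stated. It is exactly what the local check must prove: precisely one lift of the red ruling is admissible with the orange ruling of $\La_Y$, and precisely one with the non-orange rulings. The paper's augmentation computation points to which lift goes where. In the proof of Proposition~\ref{prop:aug_natural}, the isolated point $\e_{12}$ of $\A ug(S(\La_X))$ is the locus $\e(y_1)\neq 0$. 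This indicates that the lift with old layer red and new layer blue is the one paired with non-orange rulings.

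To repair the proof, either identify the junction constraint that produces this split, or drop $S(\La_X)$. In the second case, argue as the paper does that each admissible pair $(X_\rho,Y_\rho)$ extends uniquely across the stabilization layer and the inserted $\La_2$.
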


\begin{proof}
Observe that 
$$|\pi_0 (\La_X\ \natural\ \La_Y)|= |\pi_0( \La_X)| + |\pi_0 (\La_Y)|-1.$$

If a ruling $Y_\rho$ on $\La_Y$ has orange type switches and contributes to the constant term $1$ in $R_{\La_Y}(z)$, then the ruling $X_\rho$ on $\La_X$ can have either red or blue type switches as shown in $(B)$ of Figure~\ref{fig:sum}. The ruling with red type switches contributes to $1$ while the ruling with blue type switches contributes to $p(z^2)$ in $R_{\La_X}(z)$. The Euler characteristic of the glued rulings is $\chi (X_\rho) +\chi(Y_\rho)-1$, and they contribute $p(z^2)+1$ to the ruling polynomial of $\La_X\ \natural\ \La_Y$.

If a ruling $Y_{\rho}$ of $\La_Y$ does not have orange type switches, and contributes to $q(z^2)$ in $R_{\La_Y}(z)$, then $\La_X$ must have a ruling $X_{\rho}$ with switches of red type as shown in part $(A)$ of Figure~\ref{fig:sum}. Therefore, such rulings contribute to the constant term $1$ in $R_{\La_X}(z)$. We again have for the glued ruling that $\chi (X_\rho) +\chi(Y_\rho)-1$. Hence, these rulings contribute to $q(z^2)$ in the ruling polynomial of $\La_X\ \natural\ \La_Y$.

\end{proof}

\subsection{Ungraded ruling polynomials}\label{subsec:ungraded}

In this subsection, we characterize ungraded ruling polynomial and prove Theorem~\ref{thm:non-orientable}.

\begin{lemma}\label{prop:filling}
Let $\La\subset \R^3_{std}$ be a Legendrian link. If $L_1,L_2$ are exact immersed Lagrangian fillings of $\La$ with double points, then 
$$\chi(L_1)= \chi(L_2)~(mod~2).$$
\end{lemma}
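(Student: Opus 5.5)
The plan is to show that $\chi(L)$ mod $2$ is determined by $\La$ alone, for any immersed exact Lagrangian filling $L$ (with transverse double points, possibly non-orientable). We use the classical relation between self-intersection, Euler characteristic and Thurston--Bennequin number. Concretely, we first push $L$ into $\mathbb{B}^4$ so that $\partial L=\La\subset S^3$ is Legendrian. Then we compare the framing of $\La$ induced by $L$ with the contact framing.

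First step. For a Lagrangian $L$, the normal bundle $\nu L$ is isomorphic to $TL$ via $J$ (or via $\omega$). Near the boundary, the Liouville direction is tangent to $L$, and the Reeb direction $R$ is normal to $\La$ in $\xi$-complementary fashion. So along $\La$, the outward normal to $\La$ in $L$ corresponds under $J$ to a normal vector field of $L$ restricted to $\La$. This vector field is (homotopic to) the contact framing direction transverse to $\xi$, i.e.\ the Reeb-type pushoff. Hence the contact framing $tb(\La)$ coincides, component by component, with the framing on $\La$ obtained from this section of $\nu L|_\La\cong TL|_\La$. That section is the outward vector field of $L$ along $\partial L$.

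Second step. We compute the relative Euler number of $\nu L$ with respect to this boundary section in two ways. Via $\nu L\cong TL$, it equals the relative Euler number of $TL$ with respect to the outward normal, which is $\chi(L)$ (Poincaré--Hopf). In the non-orientable case we take this mod $2$, or use twisted coefficients. On the other hand, for an immersed surface in $\mathbb{B}^4$ with transverse double points, the relative normal Euler number with respect to a boundary framing $f$ equals $-(f\cdot\text{Seifert-type linking}) + 2\cdot(\text{signed count of double points})$. The standard formula is $e(\nu L, f) + 2\,\#\mathrm{dp}(L) = $ the framing of $\La$ measured against the surface framing. Here the surface framing is the zero-framing in the orientable case, and in general it is determined by $\La$ (a linking-number computation in $S^3$). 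Combining the two computations gives an identity of the form
$$\chi(L) \equiv -tb(\La) + 2\,(\text{double point contribution}) + (\text{term depending only on }\La) \pmod 2.$$
Reducing mod $2$ removes the double point term, so $\chi(L)\equiv tb(\La)$ plus a quantity determined by $\La$, mod $2$. This is a quantity independent of $L$, so $\chi(L_1)\equiv\chi(L_2)$ mod $2$.

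Main obstacle. The main obstacle is making the framing comparison rigorous mod $2$ without orientability. For non-orientable $L$, the Euler number is only defined with twisted coefficients, and the "Seifert framing" is not canonical. To get around this, we would work purely mod $2$ and use the normal Stiefel--Whitney/Euler class mod $2$. Over $\Z/2$ the identity $e(\nu L)\equiv e(TL)=\chi(L)$ holds for any surface. Alternatively, we can take the mod $2$ self-intersection of $L$ pushed off along the contact framing: the mod $2$ count of intersections of $L$ with its pushoff equals the mod $2$ linking of $\La$ with its $tb$-pushoff, which is $tb(\La)$ mod $2$ (summed over components, with the cross-linkings counted twice and hence vanishing mod $2$). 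Each double point contributes $2$ intersections, hence $0$ mod $2$, and the remaining intersections are counted by $e(\nu L)\equiv\chi(L)$. This yields $\chi(L)\equiv tb(\La)\pmod 2$, which we would present as the final cleaner form of the argument.
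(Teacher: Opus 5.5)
Your final mod-$2$ argument (the last paragraph of the proposal) is correct, but it takes a different route from the paper's.

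The paper first resolves all $c_i$ double points of $L_i$ by Polterovich's Lagrangian surgery. This gives an embedded Lagrangian filling $\tilde L_i$ with $\chi(\tilde L_i)=\chi(L_i)-2c_i$. It then quotes the Chantraine / Cao--Gallup--Hayden--Sabloff formula $tb(\La)=-\chi(\tilde L_i)-e(\tilde L_i)$, asserts that the normal Euler number $e(\tilde L_i)$ is even, and reduces mod $2$.

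You instead work directly with the immersed surface:
\begin{itemize}
\item $J$ identifies $TL\cong \nu L$ and sends the outward normal at the conical end to the Reeb direction. So a generic section of $\nu L$ whose boundary value gives the contact framing has $\chi(L)$ zeros mod $2$, by Poincar\'e--Hopf, which needs no orientability.
\item The mod-$2$ intersection number of $L$ with the corresponding pushoff equals $lk(\La,\La')\equiv tb(\La)$ mod $2$, with cross-component terms appearing twice.
\item Each transverse double point contributes two intersection points, hence nothing mod $2$.
\end{itemize}

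What your route buys:
\begin{itemize}
\item It is self-contained and needs neither Lagrangian surgery nor exactness. You only use a Lagrangian immersion with transverse double points and a conical end at $S^3$, which you should state explicitly so the pushoff stays proper with boundary $\La'$.
\item It identifies the common value explicitly as $\chi(L)\equiv tb(\La)$ mod $2$.
\item Your pushoff count is precisely the reason $e(\tilde L_i)$ is even, which the paper states without justification.
\end{itemize}
The paper's route is shorter given its citations and records the integral identity $tb(\La)=-\chi(L_i)+2c_i-e(\tilde L_i)$.

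When you write it up, drop your ``Second step'' integral formula. As stated it is imprecise: the signs are unclear, and so is what the ``surface framing'' means for a non-orientable surface or a multi-component link. Also avoid the phrase ``for any surface'': $e(\nu L)\equiv\chi(L)$ mod $2$ relies on the Lagrangian isomorphism $\nu L\cong TL$ and fails for general surfaces in $\mathbb{B}^4$. Keep only the mod-$2$ count, which stands on its own.
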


\begin{proof}
Let $c_i$ be the number of double points in $L_i$ for $i=1,2$. Let $\tilde{L}_i$ be the embedded filling obtained from $L_i$ by Lagrangian surgery ~\cite{polterovich} on all of the double points. By~\cite{Chantraine_2010,Cao_Gallup_Hayden_Sabloff_2014}, we have that
\begin{align*}
    tb(\La)&=-\chi(\tilde{L}_i)
    -e(\tilde{L}_i)\\
    &=-\chi(L_i)+2c_i-e(\tilde{L}_i)
\end{align*}
where $e(\tilde{L}_i)$ is the normal Euler number. Since $e(\tilde{L}_i)\in 2\Z$ for $i=1,2$, we can conclude that
$\chi(L_1)=\chi(L_2)~(mod~ 2)$.
\end{proof}

\begin{proposition}\label{prop:parity}
Suppose $P(z)=\displaystyle\sum_{k=1}^Na_kz^{n_k}$ is a positive integer coefficient polynomial with $1\leq k_1,k_2\leq N$ such that $n_{k_1}\in2\Z,$ and $n_{k_2}\in 2\Z+1$. Then, there cannot exist a Legendrian link with ungraded ruling polynomial equal to $P(z).$
\end{proposition}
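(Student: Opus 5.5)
The plan is to show that every exponent appearing in $R^1_{\La}(z)$ has the same parity. This immediately rules out polynomials whose exponents $n_{k_1}$ and $n_{k_2}$ have different parities. There are two routes to this: through immersed fillings and Lemma~\ref{prop:filling}, or directly through a combinatorial parity count on rulings. I will use the filling picture as the conceptual route and check it against the combinatorics.

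First, I associate a filling to each ruling. Given an ungraded normal ruling $\rho$ of a front $D$ of $\La$, the standard construction (as in \cite{Pan_Rutherford_2021,Pan_Rutherford_2023}) resolves each switch by a pinch move, which may be orientable or non-orientable since $\rho$ is ungraded. The result is a collection of max-tb unknot fronts, one per ruling disk, together with crossings that are not switches. These remaining crossings are removed by Reidemeister-type moves or by crossing changes, and each crossing change creates a double point. Capping off the unknots with disks then yields an exact immersed Lagrangian filling $L_\rho$ of $\La$ with $\chi(L_\rho)=d(\rho)-s(\rho)=\chi(\rho)$. The term contributed by $\rho$ is $z^{|\pi_0(\La)|-\chi(\rho)}$, so its exponent has parity $|\pi_0(\La)|+\chi(L_\rho)$ mod $2$. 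By Lemma~\ref{prop:filling}, $\chi(L_{\rho})\equiv\chi(L_{\rho'})$ mod $2$ for any two rulings, and hence all exponents share a parity.

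The main obstacle is justifying that an arbitrary ungraded ruling produces an immersed exact filling with only transverse double points, so that the proof of Lemma~\ref{prop:filling} applies. In the ungraded setting the Lagrangian surgery and the relation $tb=-\chi-e$ must hold for non-orientable surfaces. The normal Euler number $e$ of a surface in $\mathbb{B}^4$ relative to the boundary framing is still even for non-orientable Lagrangians, since it equals minus the Euler characteristic plus a correction, as in \cite{Cao_Gallup_Hayden_Sabloff_2014}.

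To avoid relying on this geometry, I would also give a direct combinatorial check that $d(\rho)-s(\rho)$ mod $2$ is independent of $\rho$. The number of disks is $d(\rho)=c_L$, the number of left cusps, and this does not depend on $\rho$. So it suffices to show that $s(\rho)$ mod $2$ is independent of $\rho$. For this, fix a crossing $c$ and consider its two strands, which belong to ruling disks. Each crossing is either a switch, or a non-switch at which the two disks are interlaced, nested, or disjoint. I would count, for each ordered pair of ruling disks, the number of crossings between their boundary paths. Two closed curves bounding disks in the front plane intersect an even number of times, and self-crossings of a single disk's pair of paths are even as well. Summing over all disk pairs gives $\#\text{crossings}-s(\rho)\equiv 0$ mod $2$, after noting that each switch is counted as a crossing that is not a transverse intersection of the paths. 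Therefore $s(\rho)\equiv\#\text{crossings}$ mod $2$, which is independent of $\rho$. This completes the proof, and it is the argument I would present, citing Lemma~\ref{prop:filling} as its geometric interpretation.
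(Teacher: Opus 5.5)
Your argument is correct. The version you say you would present is genuinely different from the paper's proof; your first, filling-based route is essentially what the paper does.

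The paper argues geometrically. By \cite[Theorem 1.2]{Pan_Rutherford_2023}, every ungraded ruling $\rho$ is realized by an immersed exact filling $L$ with $\chi(L)=\chi(\rho)$. Lemma~\ref{prop:filling} (Lagrangian surgery at the double points, together with $tb=-\chi-e$ and $e$ even) then forces all such Euler characteristics to agree mod $2$.

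You instead note that $d(\rho)=c_L$ is independent of $\rho$, and prove $s(\rho)\equiv \#\{\text{crossings}\}\pmod 2$ by a Jordan-curve count. Boundaries of distinct ruling disks meet only at crossings: they cross transversally at non-switches and touch without crossing at switches. Hence each pair of disks shares an even number of non-switch crossings.

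Your route is elementary and self-contained. It needs neither the existence of immersed fillings for arbitrary ungraded rulings nor the evenness of the relative normal Euler number of a possibly non-orientable surface. It also identifies the common parity explicitly: every exponent is congruent to $|\pi_0(\La)|-c_L+\#\{\text{crossings}\}\equiv|\pi_0(\La)|+tb(\La)\pmod 2$. The paper's route buys a more conceptual statement: the same parity constraint holds for all immersed exact fillings, not only those coming from rulings, which is the viewpoint taken in the introduction.

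A few points to tighten when you write it up.
\begin{itemize}
\item Sum over unordered pairs of disks. Over ordered pairs every crossing is counted twice, and the congruence becomes vacuous.
\item Replace ``self-crossings of a single disk's pair of paths are even'' by the correct statement that there are none. Each ruling path is $x$-monotone, and the two paths of a disk meet only at its cusps, so every crossing involves two distinct disks. A general closed curve can have an odd number of self-crossings, so a parity principle would not justify your claim.
\item Say explicitly why switches contribute nothing. At a switch, one boundary passes through the crossing as a $\vee$ and the other as a $\wedge$ below it. The curves therefore touch without either changing sides of the other, and a small perturbation removes the intersection point.
\end{itemize}
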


\begin{proof}

Suppose for a contradiction that $\rho_1$ and $\rho_2$ are two ungraded rulings of a Legendrian $\La$ such that $|\pi_0(\La)|-\chi(\rho_{1})=n_{k_1}\in 2\Z$ and $|\pi_0(\La)|-\chi(\rho_2)=n_{k_2}\in 2\Z+1$. Then, from the ruling $\rho_k$ for $i=1,2$ there exists immersed exact Lagrangian fillings $L_i$ with $c_i$ double points induced by $\rho_i$ such that $\chi(\rho_i)=\chi(L_i)$~\cite[Theorem 1.2]{Pan_Rutherford_2023}. Therefore, by Lemma~\ref{prop:filling},
\begin{align*}
    |\pi_0(\La)|-\chi(\rho_1)&=|\pi_0(\La)|-\chi(L_1)\\ &= |\pi_0(\La)|-\chi(L_2)~(\text{mod}~2) \\
    &= |\pi_0(\La)|-\chi(\rho_2).
\end{align*}
Then, $n_{k_1}=n_{k_2}~(mod~2)$ which is a contradiction. 
\end{proof}

We end by proving Theorem~\ref{thm:non-orientable} where we characterize which polynomials can be the ungraded ruling polynomial of a Legendrian. 
\begin{proof}[Proof of Theorem~\ref{thm:non-orientable}]
By Proposition~\ref{prop:parity}, the only polynomials that are not obstructed from being realized as ungraded ruling polynomials are of the form $P(z^2)$ and $zP(z^2).$ A direct computation shows that the graded and ungraded polynomials for the Legendrians $\La_{P(z^2)}$ are equal.
For $zP(z^2)$ we consider the max-tb $m(3_1)$ Legendrian $\La_{m(3_1)}$ with ungraded ruling polynomial equal to $z$. Then, by Proposition~\ref{prop:basic} the disjoint union of $\La_{m(3_1)}\sqcup \La_{P(z^2)}$ (where $\La_{m(3_1)}$ is on the right and $\La_{P(z^2)}$ is on the left) has ungraded ruling polynomial $zP(z^2).$ To obtain a link, one can take a connect sum $\La_{m(3_1)}\#\La_{P(z^2)}$ joining the rightmost cusp of $\La_{P(z^2)}$ to the leftmost cusp of $\La_{m(3_1)}$.

\end{proof}

\section{The augmentation varieties of $\La_{P(z^2)}$}\label{sec:aug_stack}

In this section, we compute (up to homeomorphism) the augmentation varieties of the Legendrians $\La_{P(z^2)}$. We first prove the case of $P(z^2)=k$ in Corollary~\ref{cor:aug_stack_n}, and the case of $P(z^2)=kz^{2n}+1$ in Proposition~\ref{prop:doublesharp}. After computing how the augmentation variety changes under the XY-sum in Proposition~\ref{prop:aug_natural}, we prove Theorem~\ref{thm:aug} for general polynomials $P(z^2)$ with non-negative coefficients.

We introduce some helpful notation. Let $\A ug^{\alpha}(\La)$ denote the subset of $\A ug(\La)$ that satisfies a condition $\alpha$. For example, consider an $X$-type front and use the labeling scheme from $\La_2$ on the bottom subfront of $\La_X$. As in the case of $\La_2$ there are two types of augmentations of $\La_X$: when $\e(b_1)=0$ and $\e(b_1)\neq 0$. Therefore, $\A ug(\La_X)$ can be decomposed into a disjoint union of two parts $\A ug(\La_X)= \A ug^{\e(b_1)=0}(\La_X)\sqcup \A ug^{\e(b_1)\neq0}(\La_X)$. In the case of the unknot $\La_1$, $\A ug^{\e(b_1)=0}(\La_1)$ is empty and $\A ug(\La_1)= \A ug^{\e(b_1)\neq 0}(\La_1)$ is a point.

We first show how the augmentation variety changes under stabilization (see Definition~\ref{defn:stabilization}).

\begin{lemma}\label{lem:stable}
Let $\La_X$ be an $X$-type front. The augmentation variety of $S(\La_X)$ is a disjoint union of three parts:
$$\A ug(S(\La_X))\cong \A ug^{\e(b_{1})=0}(\La_X)\sqcup\A ug^{\e(b_4)=0}(\La_X)\sqcup \A ug^{\e(b_4)=0}(\La_X).$$
\end{lemma}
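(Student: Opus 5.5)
We will compute $\A ug(S(\La_X))$ directly from the truncated differential $\partial_0$ of the Lagrangian resolution of $S(\La_X)$, exactly as in Example~\ref{ex:946}. The front of $S(\La_X)$ is that of $\La_X$ with the bottom $X$-subfront replaced by two stacked copies of the $\La_2$-pattern (Figure~\ref{fig:slag}). Away from this subfront the Reeb chords and the holomorphic disks contributing to $\partial_0$ are the same as for $\La_X$. We label the chords of the lower copy as in Example~\ref{ex:946} ($b_1,\dots,b_6$, $a_1,a_2,a_3$, $d_1,d_2$, $c_1,c_2$). We label the chords of the upper copy $b_1',\dots,b_6'$, etc. We then write down the degree-$1$ relations $\partial_0 a_i=\partial_0 a_i'=\partial_0 d_i=\partial_0 d_i'=0$ and the degree-$0$ homotopy relations coming from $\partial_0 c$-type terms. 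The first step is to check that these relations are a ``doubling'' of the $\La_2$ relations. The lower copy satisfies the same equations as the $X$-subfront of $\La_X$. The upper copy satisfies the $\La_2$-type equations, but with its constant terms (the $t^{-1}$ and $1$'s coming from the top strands) replaced by the corresponding entries of the lower copy, which are $\e(b_1)$ or $\e(b_4)$ up to units.

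The key dichotomy mirrors Lemma~\ref{lem:stabilization}. As in Example~\ref{ex:946}, the relation $\partial_0 d_1=b_1b_4$ forces $\e(b_1)\e(b_4)=0$. Modulo homotopy, every augmentation of $\La_X$ is therefore of exactly one of two kinds: red type, with $\e(b_4)=0$ and $\e(b_1)\neq 0$; or blue type, with $\e(b_1)=0$ and $\e(b_4)\neq0$. The second step is to solve for the upper copy in each case, with the lower copy and the rest of the front fixed.

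If the lower copy is of blue type, we expect the upper copy's equations to have a unique solution up to homotopy, forced to be of blue type. This contributes one copy of $\A ug^{\e(b_1)=0}(\La_X)$. If the lower copy is of red type, the upper equations should reduce to the $\La_2$ system, which has exactly two homotopy classes $\e_1,\e_2$ (table~\eqref{eq:aug}) independent of the remaining variables. This contributes two copies of $\A ug^{\e(b_4)=0}(\La_X)$. The correspondence with ruling counts (red below forces red above; blue below allows both) is swapped relative to the ruling statement because augmentations and rulings correspond with opposite ``bottom/top'' conventions. We will confirm the correct pairing from the explicit equations rather than rely on this heuristic.

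The third step is to show that the resulting decomposition is a disjoint union and not merely a set partition. The three pieces are separated by the closed and open conditions $\e(b_1)=0$, $\e(b_1')=0$, and $\e(b_4')=0$. Dg-homotopies only modify the degree-$0$ chords along the images of $\partial_0 c$, which are multiples of $b_1$, $b_4$ and their primed versions, so homotopies preserve which of these vanish. Hence each piece is a union of components, and on each piece the forgetful map to the $\La_X$-variables is a bijection onto the claimed factor. The main obstacle is the first step: correctly identifying the coupling terms between the two copies and the extra disks through the modified region near the orange line. Here I would first compute $\partial_0$ for $\La_3=S(\La_2)$ explicitly as a test case, checking that it yields three points, and then argue that the extra strands of a general $\La_X$ only enter through the variables already present in $\La_2$.
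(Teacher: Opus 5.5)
Your overall plan (read off $\dd_0$ from Figure~\ref{fig:slag}, split by the red/blue dichotomy, solve the added layer case by case) is the paper's. As written, however, the proposal has $S(\La_X)$ upside down, and the decisive second step is only asserted (``we expect'', ``should reduce'') without identifying the relations that produce the count $1+2$.

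In Figure~\ref{fig:slag} the $X$-subfront keeps its labels $b_1,\dots,b_6$ and stays on top; the differentials of $a_1,a_2,b_2,b_3,b_5,b_6$ are unchanged. The new layer $b_7,\dots,b_{12}$ is attached below it. Its relations $\dd_0 a_4=1+b_9b_{10}+b_7b_{12}$, $\dd_0 a_5$, $\dd_0 d_4=b_7b_{10}$, $\dd_0 d_5$ are literally the $\La_2$ relations for $a_2,a_3,d_1,d_2$, with no constants altered. The coupling occurs in exactly two places:
\begin{itemize}
\item the new chord $d_3$, with $\dd_0 d_3=b_4b_7$;
\item the old chord $a_3$, whose differential becomes $1+(b_6B+C)b_7(1+b_8b_9)+(\dd_0^{\La_X}a_3-1)b_9$.
\end{itemize}
So the $\La_X$-copy is not left untouched, contrary to your first step. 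The mechanism you describe (constants of the new copy replaced by $\e(b_1)$ or $\e(b_4)$ up to units) also does not occur. The role of $t$ for the new layer is played by $\beta=\e(b_6B+C)$, a word in the $\La_X$-chords.

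With the correct relations the case analysis is quick.
\begin{itemize}
\item If $\e(b_4)\neq 0$ (blue), then $d_3$ forces $\e(b_7)=0$. The $\La_2$ relations give $\e(b_{10})=-1$ and $\e(b_9)=1$, and $\dd_0 a_3$ collapses to $\dd_0^{\La_X}a_3$. This gives one copy of $\A ug^{\e(b_1)=0}(\La_X)$.
\item If $\e(b_4)=0$ (red), then $\dd_0 a_3=1+\beta\,(b_7(1+b_8b_9)+b_9)$. Each of the two $\La_2$ solutions of the new layer makes the bracket equal to $1$. The relation becomes $1+\beta=0$, which is exactly $\La_X$'s own $a_3$-relation at $b_4=0$. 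This gives two copies of $\A ug^{\e(b_4)=0}(\La_X)$ with no extra constraint.
\end{itemize}

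Your appeal to ``opposite bottom/top conventions'' between rulings and augmentations is spurious. In Lemma~\ref{lem:stabilization} the ``bottom layer'' is the new one, and ``new red forces old red'' is the same statement as ``old blue forces new blue''. The apparent swap came only from putting the new layer on top.

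Two smaller repairs are needed.
\begin{itemize}
\item $\e(b_1)\e(b_4)=0$ alone does not give ``exactly one vanishes''. You also need the $a_2$-type relation ($1+b_1b_6+b_3b_4$ in $\La_2$), and likewise $\dd_0 a_4$ for $b_7,b_{10}$.
\item The pieces are well defined and clopen because $b_1,b_4,b_7,b_{10}$ have zero truncated differential. The new homotopies through $c_3,c_4,c_5$ only move $b_8,b_9,b_{11},b_{12}$, so they identify no distinct points of $\A ug(\La_X)$.
\end{itemize}
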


\begin{figure}[!ht]

    \centering
    \labellist
    \pinlabel $t$ at 265 205
    \pinlabel  $b_1$ at 10 174
    \pinlabel  $b_2$ at 139 185
    \pinlabel  $b_3$ at 230 180
    \pinlabel $a_1$ at 250 198
    \pinlabel $b_4$ at 7 140
    \pinlabel $b_5$ at 142 131
    \pinlabel $b_6$ at 228 125
    \pinlabel $a_2$ at 250 158
    \pinlabel $a_3$ at 250 128
    \pinlabel $d_1$ at  42 159
    \pinlabel $c_1$ at 123 153
    \pinlabel $d_2$ at 160 155
    \pinlabel $c_2$ at 195 152
    \pinlabel $d_3$ at 30 118
    \pinlabel $c_3$ at 135 117
    \pinlabel $b_7$ at 10 100
    \pinlabel $b_8$ at 139 107
    \pinlabel $b_9$ at 215 105
    \pinlabel $a_4$ at 250 85
    \pinlabel $b_{10}$ at 5 70
    \pinlabel $b_{11}$ at 135 53
    \pinlabel $b_{12}$ at 225 53
    \pinlabel $a_5$ at 250 40
    \pinlabel $d_4$ at 55 90
    \pinlabel $c_4$ at 120 85
    \pinlabel $d_5$ at 163 80
    \pinlabel $c_5$ at 190 82
    \pinlabel $\La_X$ at 315 205
    \endlabellist
    \includegraphics[width=0.5\linewidth]{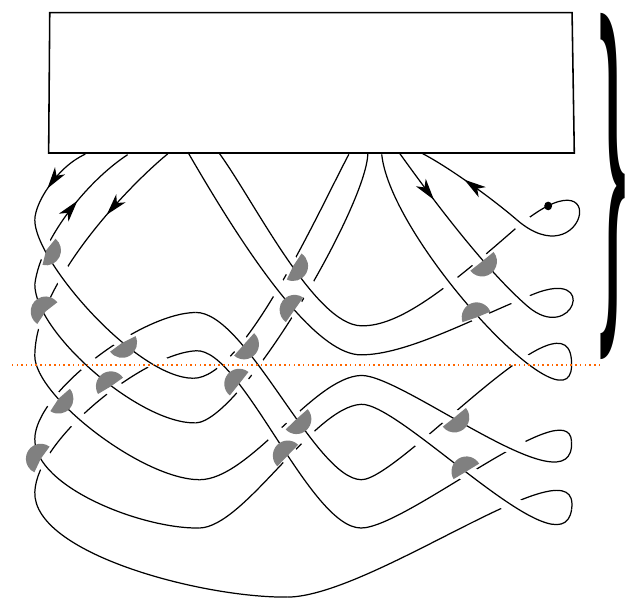}
    \caption{The Lagrangian projection of $S(\La_X)$. The shaded area corresponds negative sign.}
    \label{fig:slag}
\end{figure}

\begin{proof}
  
    See Figure~\ref{fig:slag} for the Lagrangian projection of $S(\La_X)$ and the labeled Reeb chords of degree $\pm1$ or $0$. Note that
    $$|a_i|=|d_i|=1, |b_j|=0, |c_i|=-1, \ 1\leq i \leq 5, 1\leq j\leq 12.$$
    For $a_1, a_2,b_2,b_3,b_5,$ and $b_6$ the truncated differential $\partial_0$ of $S(\La_X)$ coincides with $\dd_0^{\La_X}$ of $\La_X$. Thus, $\dd_0$ for $S(\La_X)$ is otherwise given by:

    \begin{align*}
       \dd_0 a_3 &= 1+(b_6 B +C)b_7(1+b_8b_9)+(\dd_0^{\La_X} a_3-1)b_9  
         & \dd_0 d_3 &= b_4b_7 \\
       \dd_0 a_4&=1+ b_9b_{10}+ b_7 b_{12}   
         & \dd_0 d_4 &= b_7 b_{10}\\
       \dd_0 a_5&=1+ b_{12}+b_{10}+ b_{12}b_{11}b_{10 } 
          & \dd_0 d_5&= 1+b_8b_7+ b_{10}b_{11}\\
      \dd_0 b_8 &= c_3b_4 -b_{10}c_4
      & \dd_0 b_9 &= -b_{7}c_5\\
      \dd_0 b_{11} &= c_4b_7
      & \dd_0 b_{12} &= c_5b_{10}
\end{align*}
where $A,B,C$ are words of Reeb chords of $\La_X$ such that the truncated differential $\dd_0^{\La_X}$ of $\A(\La_X)$ satisfies
$$  \dd_0^{\La_X} a_3 = 1+(A+b_6b_5)b_4+b_6 B +C.$$ 

After a direct computation, as in the case of $\La_2$, the bottom layer of $S(\La_X)$ has two types of augmentations: $\e(b_7)=0$ or $\e(b_{10})=0$. When $\e(b_{10})=0$, then $\e(b_4)=\e(b_9)=0$, and $\e(b_7)=1$, in which case $\dd_0$ agrees with $\dd_0^{\La_X}$ for the remaining generators. When $\e(b_7)=0$, $\e(b_9)=1$ and $\dd_0$ agrees with $\dd_0^{\La_X}$ with no additional restrictions. Thus, there are three augmentations of $S(\La_X)$ up to dg-algebra homotopy as follows:
  \begin{equation}\label{eqn:stab}
      \begin{array}{|c|c|c|c|c|c|c|c|c|c|c|c|c|c|}
      \hline
         & b_1&  b_2& b_3& b_4& b_5& b_6 &b_7& b_8& b_9& b_{10} &b_{11}&b_{12}&t\\
        \hline
        \e_{11} &1&-1&0&0&0&-1& 1&-1&0&0& 0&-1&-1\\
        \hline
        \e_{12} &1&-1&0&0&0&-1&0 &0&1&-1&1&0&-1\\
                \hline
        \e_{22} &0&0&1&-1&1& 0&0&0&1&-1&1&0&-1\\
        \hline
    \end{array}
    \end{equation}
    
    Observe that both $\e_{11}$ and $\e_{12}$ contribute to disjoint  
    copies of $\Aug^{\e(b_4)=0}(\La_X)$ in $\A ug(S(\La_X))$. 
Thus, $\A ug(S(\La_X))$ is homeomorphic to the disjoint union of 
$$\A ug^{\e(b_{1})=0}(\La_X)\sqcup\A ug^{\e(b_4)=0}(\La_X)\sqcup \A ug^{\e(b_4)=0}(\La_X).$$
\end{proof}

\begin{corollary}\label{cor:aug_stack_n}
    The augmentation variety of $\La_k$ is a disjoint union of $k$ points for $k\geq 1$.
\end{corollary}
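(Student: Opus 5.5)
We argue by induction on $k$, using Lemma~\ref{lem:stable} together with the remark that $S(\La_{k-1})=\La_k$ for $k\geq 3$.

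\emph{Base cases.} For $k=1$, $\La_1$ is the max-tb unknot. Its augmentation variety is a single point, as recorded above: $\A ug(\La_1)=\A ug^{\e(b_1)\neq 0}(\La_1)$. For $k=2$, $\La_2$ is the max-tb $m(9_{46})$, and Example~\ref{ex:946} computes $\A ug(\La_2)$ as two points $\e_1,\e_2$. From table~\eqref{eq:aug}:
\begin{itemize}
\item $\e_1$ has $\e(b_1)=1\neq 0$ and $\e(b_4)=0$;
\item $\e_2$ has $\e(b_1)=0$ and $\e(b_4)=-1\neq 0$.
\end{itemize}
So $\A ug^{\e(b_1)=0}(\La_2)$ and $\A ug^{\e(b_4)=0}(\La_2)$ are each one point.

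\emph{Strengthened induction hypothesis.} Lemma~\ref{lem:stable} needs the refined pieces $\A ug^{\e(b_1)=0}$ and $\A ug^{\e(b_4)=0}$, not just the total count. So we prove the following stronger statement for $k\geq 2$, with $b_1,b_4$ labelling the bottom layer of the $X$-type front of $\La_k$:
\begin{itemize}
\item $\A ug(\La_k)$ consists of $k$ points;
\item these points are partitioned into $\A ug^{\e(b_1)=0}(\La_k)$, which is exactly one point with $\e(b_4)\neq 0$;
\item and $\A ug^{\e(b_4)=0}(\La_k)$, which is $k-1$ points with $\e(b_1)\neq 0$.
\end{itemize}
The case $k=2$ holds by the base case above.

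\emph{Inductive step.} Assume the statement for $\La_{k-1}$ with $k\geq 3$. Lemma~\ref{lem:stable} gives
\[
\A ug(\La_k)\cong \A ug^{\e(b_1)=0}(\La_{k-1})\sqcup \A ug^{\e(b_4)=0}(\La_{k-1})\sqcup \A ug^{\e(b_4)=0}(\La_{k-1}),
\]
which has $1+(k-2)+(k-2)=2k-3$ points. This equals $k$ only when $k=3$, so the lemma must be read more carefully.

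In $S(\La_X)$ the new bottom layer carries generators $b_7,\dots,b_{12}$, and these play the roles of $b_1,\dots,b_6$ for the next stabilization. According to the table~\eqref{eqn:stab}, the three families are distinguished by the old layer and the new layer:
\begin{itemize}
\item $\e_{22}$: old layer with $\e(b_1)=0$, new layer with $\e(b_7)=0$;
\item $\e_{12}$: old layer with $\e(b_4)=0$, new layer with $\e(b_7)=0$;
\item $\e_{11}$: old layer with $\e(b_4)=0$, new layer with $\e(b_{10})=0$.
\end{itemize}
The counting must therefore track the new-layer conditions. The family with new-layer condition $\e(b_{10})=0$ should be of size one and consist of the "all red" augmentation. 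This matches the ruling picture in Proposition~\ref{lem:n}: once the layer is red, the layer below must be red.

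\emph{Recursion.} The cleanest formulation uses the proof of Lemma~\ref{lem:stable} rather than its displayed statement. Let $x_j$ be the number of augmentations of $\La_j$ with bottom-layer condition $\e(\text{new }b_1)=0$ (blue type), and $y_j$ the number with $\e(\text{new }b_4)=0$ (red type). The proof of Lemma~\ref{lem:stable} shows:
\begin{itemize}
\item a new red bottom layer forces the layer above to be red;
\item a new blue bottom layer allows the layer above to be either type.
\end{itemize}
This mirrors Lemma~\ref{lem:stabilization} and gives
\[
y_k=y_{k-1},\qquad x_k=x_{k-1}+y_{k-1}.
\]
Starting from $(x_2,y_2)=(1,1)$, we get $y_k=1$ and $x_k=k-1$, so $x_k+y_k=k$. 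Each piece is a single point, because at every stage the remaining generators are determined up to dg-homotopy exactly as in Example~\ref{ex:946}.

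\emph{Main obstacle.} The key step is matching the three-way decomposition of Lemma~\ref{lem:stable} with this layer-by-layer recursion. Concretely, we must identify which of $b_1,b_4$ versus $b_7,b_{10}$ serve as the "bottom layer" labels when the lemma is applied again to $\La_{k+1}=S(\La_k)$. We also need to check from the displayed truncated differentials that each case yields exactly one augmentation up to dg-algebra homotopy. In particular, the homotopies $\dd_0 b_8$, $\dd_0 b_9$, $\dd_0 b_{11}$, $\dd_0 b_{12}$ must kill any continuous parameters, exactly as for $\La_2$.
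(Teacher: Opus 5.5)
Your route is the paper's: induct on $k$ using $\La_k=S(\La_{k-1})$ and Lemma~\ref{lem:stable}. Your final recursion $y_k=y_{k-1}$, $x_k=x_{k-1}+y_{k-1}$ with $(x_2,y_2)=(1,1)$ is correct. However, the ``strengthened induction hypothesis'' you state has the two pieces swapped, and that swap is the sole source of your $2k-3$ discrepancy.

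The singleton is $\A ug^{\e(b_4)=0}(\La_k)$, the all-red augmentation. The piece $\A ug^{\e(b_1)=0}(\La_k)$ has $k-1$ points. Your version is already false at $k=3$. In table~\eqref{eqn:stab}, with $b_7,b_{10}$ as the new bottom-layer labels, $\A ug^{\e(b_7)=0}(\La_3)=\{\e_{12},\e_{22}\}$ while $\A ug^{\e(b_{10})=0}(\La_3)=\{\e_{11}\}$. It also contradicts your own recursion, where $x_k=k-1$ counts the augmentations with $\e(b_1)=0$.

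With the correct assignment, the displayed formula of Lemma~\ref{lem:stable} gives $(k-2)+1+1=k$ directly. So nothing is wrong with the lemma's statement, and you do not need to fall back on its proof to get the count.

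What you do need from the proof of Lemma~\ref{lem:stable} is the propagation of the refined hypothesis to the next stage. The paper's one-line argument leaves this implicit, so tracking it is a good instinct. The proof gives $\A ug^{\e(b_{10})=0}(S(\La_X))\cong \A ug^{\e(b_4)=0}(\La_X)$, since a new red layer forces the old layer to be red. It also gives $\A ug^{\e(b_7)=0}(S(\La_X))\cong \A ug(\La_X)$, since a new blue layer imposes no restriction. At the next stage $b_7,b_{10}$ play the roles of $b_1,b_4$. This is exactly your recursion.

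So state the hypothesis as $|\A ug^{\e(b_4)=0}(\La_k)|=1$ and $|\A ug^{\e(b_1)=0}(\La_k)|=k-1$, and the argument is complete. Your remaining ``obstacle'', that each piece is a single point up to dg-homotopy, needs no new computation. The lemma's identifications are homeomorphisms onto pieces of $\A ug(\La_{k-1})$, which are points by induction.
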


\begin{proof}
The augmentation variety of $\La_1$, the max-tb unknot, is a single point. By Example~\ref{ex:946}, the augmentation variety of Legendrian $\La_2$ is a disjoint union of two points. By Lemma~\ref{lem:stable}, the augmentation variety of $\La_k=S(\La_{k-1})$ is a disjoint union of $k$ points.
\end{proof}

\begin{figure}[!ht]

\labellist
\tiny{
\pinlabel $k-1$ at 140 80
\pinlabel $a_1$ at 380 230
\pinlabel $a_2$ at 375 180
\pinlabel $y_1$ at 5 205
\pinlabel $y_2$ at 345 215
\pinlabel $B^0_{11}$ at 50 195
\pinlabel $B^0_{22}$ at 68 152
\pinlabel $B^0_{12}$ at 90 173
\pinlabel $B^0_{21}$ at 10 173
\pinlabel $B^1_{11}$ at 180 195
\pinlabel $B^1_{22}$ at 170 150
\pinlabel $B^1_{12}$ at 220 170
\pinlabel $B^1_{21}$ at 140 173

\pinlabel $B^n_{11}$ at 310 195
\pinlabel $B^n_{22}$ at 320 150
\pinlabel $B^n_{12}$ at 360 170
\pinlabel $B^n_{21}$ at 290 173

\pinlabel $a^0_1$ at 100 162
\pinlabel $a^0_2$ at 100 142
\pinlabel $t^0$ at 120 153
\pinlabel $b^0_1$ at 5 140
\pinlabel $b^0_2$ at 48 143
\pinlabel $b^0_3$ at 91 152
\pinlabel $b^0_4$ at 5 120
\pinlabel $b^0_5$ at 57 113
\pinlabel $b^0_6$ at 88 115
\pinlabel $d^0_1$ at 27 130
\pinlabel $c^0_1$ at 45 130
\pinlabel $d^0_2$ at 69 132
\pinlabel $c^0_2$ at 81 140

\pinlabel $a^1_1$ at 240 160
\pinlabel $a^1_2$ at 243 136
\pinlabel $a^1_3$ at 245 100
\pinlabel $t^1$ at 265 153
\pinlabel $b^1_1$ at 128 140
\pinlabel $b^1_2$ at 190 150
\pinlabel $b^1_3$ at 226 150
\pinlabel $b^1_4$ at 130 120
\pinlabel $b^1_5$ at 190 115
\pinlabel $b^1_6$ at 231 115
\pinlabel $d^1_1$ at 160 132
\pinlabel $c^1_1$ at 174 132
\pinlabel $d^1_2$ at 205 133
\pinlabel $c^1_2$ at 220 126

\pinlabel $a^n_1$ at 393 155
\pinlabel $a^n_2$ at 393 133
\pinlabel $a^n_3$ at 395 95
\pinlabel $t^n$ at 415 153
\pinlabel $b^n_1$ at 283 140
\pinlabel $b^n_2$ at 343 147
\pinlabel $b^n_3$ at 382 143
\pinlabel $b^n_4$ at 282 120
\pinlabel $b^n_5$ at 342 110
\pinlabel $b^n_6$ at 383 110
\pinlabel $d^n_1$ at 315 130
\pinlabel $c^n_1$ at 330 130
\pinlabel $d^n_2$ at 358 130
\pinlabel $c^n_2$ at 375 120
\pinlabel $\La^0$ at  50 0
\pinlabel $\La^1$ at 190 70
\pinlabel $\La^n$ at 350 70
}
\endlabellist
       \includegraphics[width=0.9\linewidth]{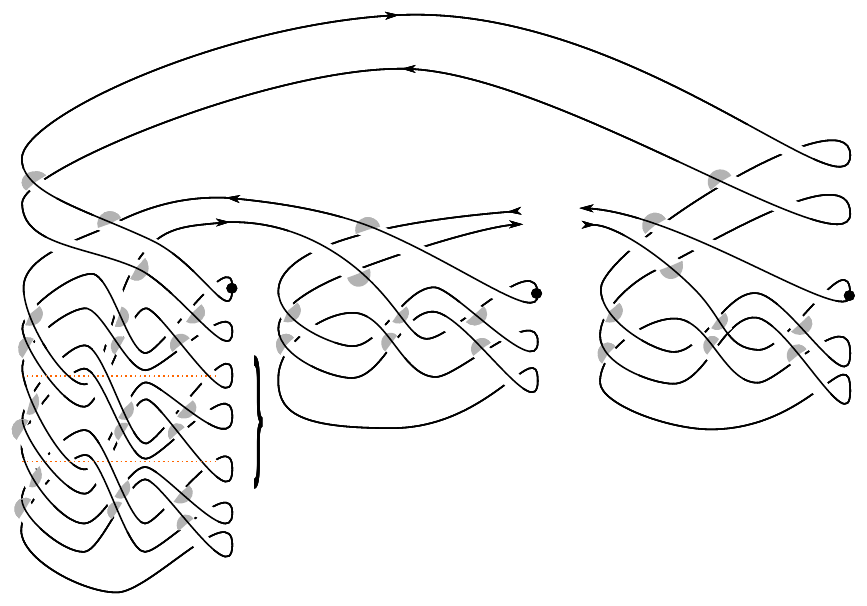}
                \vspace{0.1in}
                
        \caption{The Lagrangian projection of $\La_{kz^{2n}+1}$.}
        \label{fig:genelag}
    \end{figure}
    
\begin{proposition}\label{prop:kz2n+1}
The augmentation variety of $\La_{kz^{2n}+1}$ is a disjoint union of $k$ copies of $(\C^*)^{2n}$ and a point.
\end{proposition}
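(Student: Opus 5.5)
The plan is to compute $\A ug(\La_{kz^{2n}+1})$ by cutting the front into its crossed-connect-summand pieces, as in the proof of Lemma~\ref{lem:stable}. The pieces are the stacked copy $\La^0=\La_{k+1}$, the Hopf link, and the $n$ copies $\La^1,\dots,\La^n$ of $\La_2$, with Reeb chords labeled as in Figure~\ref{fig:genelag}.

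First, I will write down the truncated differential $\dd_0$. For generators interior to a single piece, $\dd_0$ agrees with the differential of that piece: for $\La^0$ this is the differential of $\La_{k+1}$ computed in Lemma~\ref{lem:stable}, and for each $\La^i$ it is Example~\ref{ex:946}. For the chords created by the crossed connect sums, namely the $B^i_{jl}$, $y_1,y_2$ and the top $a_1,a_2$, I will compute $\dd_0$ directly from the immersed polygons. These give coupling equations of $2\times 2$ matrix type: each $B^i=(B^i_{jl})$ should behave like the crossing block of a Hopf clasp. Their relations should read roughly $B^{i}$ times a matrix built from $\e(b^i_1),\e(b^i_4)$ (resp. from $\e(b^0_\ast)$), equal to a unipotent or identity term, together with the longitude relation of the Hopf link carried by $y_1,y_2$.

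Second, I will split into cases by the augmentation type on each piece, using the dichotomy $\e(b_1)=0$ or $\e(b_4)=0$ from Example~\ref{ex:946} and Lemma~\ref{lem:stable}, which fixes each piece up to homotopy. By Corollary~\ref{cor:aug_stack_n}, $\La^0=\La_{k+1}$ contributes $k+1$ points: one of blue type on the top layer and $k$ of red type. Guided by the ruling analysis in Proposition~\ref{prop:2}, I expect the coupling equations to force the following.
\begin{enumerate}
\item If the top layer of $\La^0$ is blue, every $\La^i$ must be of red type and all $B^i_{jl}$ are rigid. Up to homotopy this gives a single point.
\item If the top layer of $\La^0$ is red (one of $k$ choices), every $\La^i$ must be blue. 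Each clasp $B^i$ then has two free invertible parameters: one from the extra link component, as in the SZ-pinch count of Proposition~\ref{prop:szpinch}(1), and one from a free diagonal entry of $B^i$. The remaining entries are killed by homotopy via $\dd_0 b^i_j$ and the $c$-chords. This gives $(\C^*)^2$ per copy of $\La_2$, hence $(\C^*)^{2n}$.
\end{enumerate}
Mixed choices should be inconsistent, because $\dd_0$ of the chords $a^i_1$ at the junction would force a nonzero entry to vanish.

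An alternative for the second step is to find a sequence of $n$ SZ-pinch moves, one per $\La_2$ factor, applied at the connecting crossings of $B^i$. Restricted to the red-type component, these would reduce to $\La_{k+1}$ together with the disjoint copies, and each move contributes a factor $(\C^*)^2$ by Proposition~\ref{prop:szpinch}. The difficulty is that this only holds on one stratum, so I would use it as a check rather than as the proof.

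The main obstacle is the homotopy quotient in the red case. I must show that the naive solution set modulo dg-homotopy is exactly $(\C^*)^{2n}$ and not a larger or non-Hausdorff quotient. I will handle this by explicitly normalizing $\e(b^i_2),\e(b^i_5)$ and the off-diagonal $B^i_{12}$ to fixed values using the derivations dual to $c^i_1,c^i_2$. I will then check that the two surviving coordinates per clasp are homotopy invariant, by exhibiting them as values on the coefficient ring $\C[H_1]$ of the components, which homotopy cannot change.
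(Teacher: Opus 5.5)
Your overall route is the paper's. You compute $\dd_0$ from the Lagrangian projection in Figure~\ref{fig:genelag}, split according to whether the top layer of $\La^0=\La_{k+1}$ is red or blue, propagate the type along the $\La_2$ pieces, and obtain $k$ tori and one point. Your case analysis is also the right one. However, two steps of the plan would not go through as written.

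First, chords interior to a piece do \emph{not} keep that piece's differential, because the crossed connect sums reroute their disks through the new crossings. In the paper, for $i\ge 2$, $\dd_0 a^i_1=(t^i)^{-1}+b^{i-1}_1(1+b^i_2b^i_3)+B^i_{11}b^i_3$. Likewise $\dd_0 a^i_2=1-b^i_3B^i_{22}b^i_4+b^i_1b^i_6$ and $\dd_0 d^i_2=-B^i_{22}(1+b^i_4b^i_5)+b^i_2b^i_1$, and the differentials of $b^i_2,b^i_6,b^0_3,a^0_1,a^0_2,a^1_1$ change too. These modified terms are where the torus comes from. On the stratum $\e(b^i_1)=0$ they give $\e(b^i_3)=-(s^i_2)^{-1}$ and $\e(t^i)=s^i_2/s^i_1$, where $s^i_j=\e(B^i_{jj})\in\C^*$. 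If you used the unmodified differential of Example~\ref{ex:946}, the equations for $a^i_1,a^i_2,d^i_2,a^i_3$ would make the blue piece rigid with $\e(t^i)=-1$. Then neither your ``parameter from the extra link component'' nor the coupling through $a^i_1$ that you invoke to exclude mixed types would appear. Also, $\dd_0 B^i_-$ is homogeneous in $B^i$, with no identity or unipotent term. The $B$-chords propagate the type via $\e(b^{i-1}_1)\e(B^i_{22})+\e(B^i_{11})\e(b^i_1)=0$. Invertibility of the diagonal entries instead comes from $\dd_0 a_1$, $\dd_0 a_2$ and the modified $a^i_1,a^i_2$.

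Second, your homotopy-invariance argument covers only half of the coordinates. $\La_{kz^{2n}+1}$ has $n+1$ components, and on the torus stratum $\e(t^0)=(-1)^{n+1}\prod_{i\ge 1}\e(t^i)^{-1}$. So the coefficient ring supplies only $n$ independent invariants, namely $\e(t^i)=s^i_2/s^i_1$. The other $n$ coordinates, e.g.\ $s^i_2=\e(B^i_{22})$, are chord values, and their invariance must be checked directly:
\begin{itemize}
\item A homotopy $H$ changes $\e(B^i_{22})$ by $H(B^i_{21}b^i_1)$.
\item It changes $\e(B^i_{11})$ by $-H(b^{i-1}_1B^i_{21})$, with $1+b^0_1b^0_2$ in place of $b^{0}_1$ when $i=1$.
\item Both changes vanish on this stratum, because $\e(b^i_1)=0$ for $i\ge 1$ and $1+\e(b^0_1)\e(b^0_2)=0$.
\end{itemize}
Finally, $B^i_{12}$ has degree $1$, so $\e(B^i_{12})=0$ automatically and there is nothing to normalize. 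The homotopies are generated by the degree $-1$ chords $c^i_1,c^i_2$ and $B^i_{21}$. It is $B^i_{21}$ that sets $\e(B^i_{11})$ and $\e(B^i_{22})$ to $0$ on the point stratum.
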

\begin{proof}
Label the Reeb chords of $\La_{kz^{2n}+1}$ as shown in Figure~\ref{fig:genelag}. Compared to $\sqcup_{i=0}^n\La^{i}$, where  $\La^0=\La_{k+1}$ and $\La^i=\La_2$ for $i=1, \cdots, n$,  $\La_{kz^{2n}+1}$ has $4(n+2)$ more Reeb chords with grading
$$|a_j|=1, \ |y_j|=|B^i_{jj}|=0,\ |B^i_{12}|=1, \ |B^i_{21}|=-1, \mbox{ for } j=1,2, \ i=0, \cdots, n.$$
The truncated differential $\dd_0$ of $\La_{kz^{2n}+1}$ that differs from the truncated differential of $\sqcup_{i=0}^n\La^{i}$
is listed below.

\begin{align*}
   \partial_0 a_1&=1+y_1 y_2+(-1)^{n+1}\Pi_{i=0}^{n}B_{11}^i& 
   \partial_0 a_2&=1+y_2 b_1^n +\Pi_{i=0}^{n}B_{22}^i\\
    \dd_0 a^0_1&=(t^0)^{-1}+y_1 b^0_3+B^0_{11}b^0_1 &
    \dd_0 a^1_1&=(t^1)^{-1}+(1+b^0_1b^0_2) (1+ b^1_2b^1_3)+B^1_{11}b^1_3\\
   \dd_0 a^0_2&= 1-B^0_{22}b^0_1b^0_6+b^0_3b^0_4 & 
     \end{align*}
       \begin{align*}
 \dd_0 a^i_1&=(t^i)^{-1}+b^{i-1}_1(1+ b^i_2b^i_3)+B^i_{11}b^i_3 &\mbox{ for }& i=2,\cdots ,n \\
\dd_0 a^i_2&= 1-b^i_3B^i_{22}b^i_4 + b^i_1b^i_6  &\mbox{ for }& i=1,\cdots ,n\\
\dd_0 d^i_2&= -B^i_{22}(1+b^i_4b^i_5)+b^i_2b^i_1 &\mbox{ for }& i=1,\cdots ,n
   \end{align*}

\begin{align*}
    \dd_0 y_2&=\sum_{k=0}^n(-1)^{n-k+1}(\Pi_{p=0}^{k-1} B^p_{22})B^k_{21} (\Pi_{q=k+1}^{n} B^q_{11})& \\
    \dd_0 b^0_3&=B^0_{22}b^0_1 c^0_2+B^0_{21}b^0_1&\\
    \dd_0 b^i_2&= B^i_{22}b^i_4c^i_1+B^i_{21} &\mbox{ for }& i=1,\cdots ,n\\
\dd_0 b^i_6&=- c^i_2B^i_{22}b^i_4 &\mbox{ for }& i=1,\cdots ,n
\end{align*}

\noindent Write $B^i_{\pm}$ as  $2\times 2$ matrices $$B^i_+=\left(\begin{array}{cc}
     B^i_{11}&  B^i_{12} \\
    B^i_{21}  &  B^i_{22}
\end{array}\right), B^i_-=\left(\begin{array}{cc}
    - B^i_{11}&  B^i_{12} \\
    B^i_{21}  &  B^i_{22}
\end{array}\right),$$
   
   \begin{align*}
   \dd_0 B_-^0&=\left(\begin{array}{cc}
     0&  y_1 \\
    0  &  0
\end{array}\right) B^0_+ + B^0_+\left(\begin{array}{cc}
     0&  1+b^0_1b^0_2 \\
    0  &  0
\end{array}\right)
 &
    \dd_0 B_-^1&=\left(\begin{array}{cc}
     0& 1+b^0_1b^0_2 \\
    0  &  0
\end{array}\right) B^1_+ + B^1_+\left(\begin{array}{cc}
     0&  b^1_1 \\
    0  &  0
\end{array}\right)\\
\dd_0 B_-^i&=\left(\begin{array}{cc}
     0& b^{i-1}_1 \\
    0  &  0
\end{array}\right) B^i_+ + B^i_+\left(\begin{array}{cc}
     0&  b^i_1 \\
    0  &  0
\end{array}\right)
&\mbox{ for }& i=2,\cdots ,n
     \end{align*}

It is straightforward to verify, working left to right, that if $\e(b_1^n)=0,$ and $\e(b_4^n)\neq 0$ then $\e(b_1^{i})=0$, and $\e(b_4^{i})\neq 0$ for $1\leq i\leq n$. Furthermore, $\e(y_1)=0$ and  $1+\e(b^0_1) \e (b^0_2)=0$, which implies that $\e(b^0_1)\neq 0$. In this case, the augmentation $\e$ depends on the values $\e(B^i_{11})=s_1^i$ and $\e(B^i_{22})=s_2^i$ where  $s_1^i,s_2^i\in\C^*$ for $i=1,\cdots, n$.
Similarly to the case of $\La_{k+1}$, there are $k$ such augmentations.

If $\e(b_1^n)\neq0,$ and $\e(b_4^n)= 0$ then $\e(b_1^{i})\neq 0,$ and $\e(b_4^{i})=0$ for $1\leq i\leq n-1$. Additionally $\e(y_1)\neq 0$, and $1+\e(b^0_1)\e(b^0_2)\neq 0$, which implies that  $\e(b_1^{0})= 0,$ and $\e(b_4^{0})\neq0$. All the $\e(B^i_{11})$  and $\e(B^i_{22})$ can be set to $0$ using dg-algebra homotopies for $0\leq i\leq n$. In fact, $\e$ is fixed on all generators in this case, and contributes to a point in $\A ug(\La_{kz^{2n}+1})$. In particular,
 $$\label{eq:esharp}
    \begin{array}{|c|c|c|c|c|c|c|c|c|c|c|c|c|c|c|}
     \hline
    &
    b_1^0&b_2^0&b_3^0&b_4^0&b_5^0&b_6^0& b_1^{i}&b_2^{i}&b_3^{i}&b_4^{i}&b_5^{i}&b_6^{i}&y_1&y_2
\\\hline
         \e_{b_1^n= 0}& \Pi_{i=1}^n s_2^i& -(\Pi_{i=1}^n s_2^i)^{-1} &0&0& 0&-1&0&0&-(s_2^i)^{-1}&-1&1&0&0&0
         \\\hline
         \e_{b_1^n\neq 0}& 0& 0 &1&-1&1&0&1&0&0&0&0&-1&1&-1\\  
\hline
\end{array}
$$

and
$$
    \begin{array}{|c|c|c|c|c|c|c|}
     \hline
   &B_{11}^0& B^0_{22}    & B_{11}^{i}& B^{i}_{22} &  t^0&t^i
\\\hline
         \e_{b_1^n= 0}& (-1)^n(\Pi_{i=1}^n s_1^i)^{-1} &-(\Pi_{i=1}^n s_2^i)^{-1}&s^i_1&s^i_2&  (-1)^{n+1}(\Pi_{i=1}^n(s_2^i)^{-1}s_1^i)  &s_2^i(s_1^i)^{-1}  
         \\\hline
         \e_{b_1^n\neq 0}&0&  0& 0& 0&-1&-1  
\\\hline
\end{array}
$$

In summary,
$$\A ug(\La_{kz^{2n}+1})=\A ug^{\e(b^n_1)=0}(\La_{kz^{2n}+1})\sqcup \A ug^{\e(b^n_1)\neq0}(\La_{kz^{2n}+1}) \cong (\sqcup_k(\C^*)^{2n})\sqcup pt.$$
\end{proof}

\begin{remark}\label{rem:XYsum}
To compute $\A ug(\La_X\ \natural \ \La_Y)$ we decompose $XY$-sums into two simpler operations: first add a layer of $m(9_{46})$ to $\La_X$ to obtain $S(\La_{X})$; and perform a double crossed connect sum of $\La_Y$ and $m(9_{46})$ as shown in Figure~\ref{fig:doublesharp}. Call such a Legendrian $S(\La_X) \sharp_{m(9_{46})} \La_Y$. If we perform additional Reidemeister I moves on $S(\La_X)$ and $\La_Y$ and then perform the double crossed connect sum we obtain the Legendrian shown in Figure~\ref{fig:doubleRI} which is in fact Legendrian isotopic to $\La_X  \ \natural \  \La_Y$  through a satellite isotopy such as the one shown in Figure~\ref{fig:sat}. This front is more convenient to work with in the proof of Theorem~\ref{thm:aug} and by abuse of notation we call both fronts $\La_X  \ \natural \  \La_Y$.
\end{remark}

\noindent Next, we compute the augmentation variety for a double crossed connect sum $\sharp_{m(9_{46})}$:

    \begin{figure}[!ht]
        \centering
        \labellist
        \pinlabel $\La_1$ at 70 -10
        \pinlabel $\La_2$ at 240 -10
        \pinlabel $\La_1\ \sharp_{m(9_{46})}\ \La_2$ at 520 -10
        \tiny{
        \pinlabel $b_1$ at 30 230
        \pinlabel $b_2$ at 155 245
        \pinlabel $b_3$ at 240 250
        \pinlabel $b_4$ at 30 200
        \pinlabel $b_5$ at 160 193
        \pinlabel $b_6$ at 250 193
        \pinlabel $a_1$ at 270 265
        \pinlabel $a_2$ at 270 235
        \pinlabel $a_3$ at 273 175
        \pinlabel $y_1$ at 0 78
        \pinlabel $y_2$ at 160 78
        \pinlabel $a^1_{11}$ at 130 112
        \pinlabel $a^1_{22}$ at  121 55
        \pinlabel $a^1_{21}$ at 88 82
        \pinlabel $t_1^1$ at 158 100
        \pinlabel $t_1^2$ at 60 90
        \pinlabel $a^2_{11}$ at 290 114
        \pinlabel $a^2_{22}$ at 283 58
        \pinlabel $a^2_{21}$ at 250 82
        \pinlabel $t_2^1$ at 320 100
        \pinlabel $t_2^2$ at 215 90
        \pinlabel $d_1$ at 80 215
        \pinlabel $c_1$ at 130 215
        \pinlabel $d_2$ at 182 215
        \pinlabel $c_2$ at 215 222
        \pinlabel $b^1_{11}$ at 55 62
        \pinlabel $b^1_{22}$ at 55 15
        \pinlabel $b^1_{21}$ at 25 36
        \pinlabel $b^1_{12}$ at 85 40
        \pinlabel $b^2_{11}$ at 218 60
        \pinlabel $b^2_{22}$ at 225 15
        \pinlabel $b^2_{21}$ at 190 35
        \pinlabel $b^2_{12}$ at 245 35
        \pinlabel $y_1$ at 355 75
         \pinlabel $b_1$ at 390 230
        \pinlabel $b_2$ at 510 240
        \pinlabel $b_3$ at 595 245
        \pinlabel $b_4$ at 390 195
        \pinlabel $b_5$ at 513 190
        \pinlabel $b_6$ at 613 185
        \pinlabel $a_1$ at 625 260
        \pinlabel $a_2$ at 628 225
        \pinlabel $a_3$ at 630 170
        \pinlabel $y_2$ at 519 78
        \pinlabel $a^1_{11}$ at 490  110
        \pinlabel $a^1_{22}$ at  476 53
        \pinlabel $a^1_{21}$ at 445 78
        \pinlabel $t$ at 510 100
        \pinlabel $a^2_{11}$ at 644 105
        \pinlabel $a^2_{22}$ at 638 55
        \pinlabel $a^2_{21}$ at 610 80
        \pinlabel $B^1_{11}$ at 442 155
        \pinlabel $B^1_{22}$ at 435 92
        \pinlabel $B^1_{21}$ at 405 128
        \pinlabel $B^1_{12}$ at 465 133
        \pinlabel $B^2_{11}$ at 568 160
        \pinlabel $B^2_{22}$ at 570 105
        \pinlabel $B^2_{12}$ at 600 135
        \pinlabel $B^2_{21}$ at 535 120
        \pinlabel $d_1$ at 440 210
        \pinlabel $c_1$ at 495 210
        \pinlabel $d_2$ at 540 212
        \pinlabel $c_2$ at 575 215
        \pinlabel $b^1_{11}$ at 410 62
        \pinlabel $b^1_{22}$ at 415 15
        \pinlabel $b^1_{21}$ at 385 35
        \pinlabel $b^1_{12}$ at 440 37
        \pinlabel $b^2_{11}$ at 573 60
        \pinlabel $b^2_{22}$ at 580 15
        \pinlabel $b^2_{21}$ at 548 35
        \pinlabel $b^2_{12}$ at 605 37
        }
        \endlabellist
                \includegraphics[width=1\linewidth]{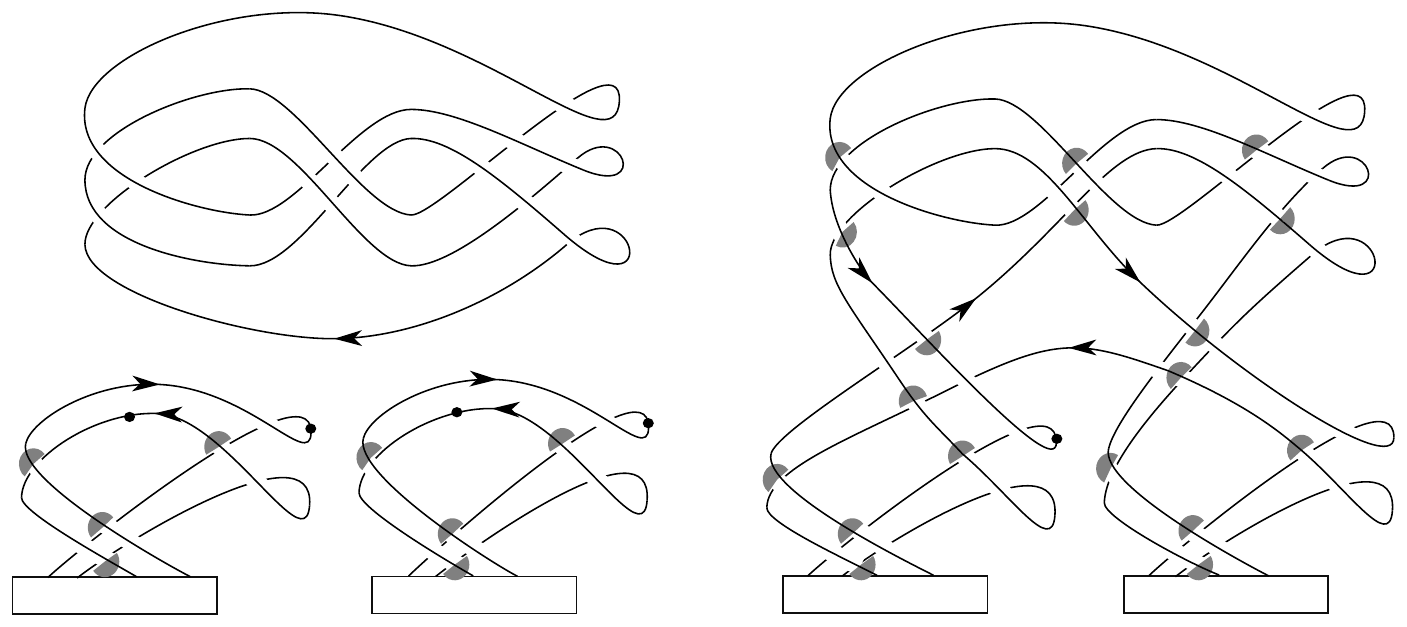}
                \vspace{0.1in}
                
        \caption{A local model of the double crossed connected sum $\sharp_{m(9_{46})}$ in the Lagrangian projection.}
        \label{fig:doublesharp}
    \end{figure}

\begin{proposition}\label{prop:doublesharp}
Let $\La_1,\La_2\subset \R^3_{std}$ for $i=1,2$ have subfronts as shown in Figure~\ref{fig:doublesharp}.
We further assume that the two strands of $\La_i$ appearing in the local model shown in Figure ~\ref{fig:doublesharp} belong to the same connected component of $\La_i$, and their Maslov potential differs by $1$ so that the Reeb chords $y_i$ are of degree $0$. 
Then,
    $$\A ug(\La_1 \ \sharp_{m(9_{46})}\  \La_2)= \A ug^{\e(y_1)\neq 0}(\La_1)\times \A ug^{\e(y_2)=0}(\La_2)  \sqcup \A ug^{\e(y_1)=0}(\La_1)\times \A ug^{\e(y_2)\neq 0}(\La_2).
    $$
\end{proposition}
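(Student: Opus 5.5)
We compute the truncated differential $\partial_0$ of $\La_1\ \sharp_{m(9_{46})}\ \La_2$ directly from the Lagrangian projection in Figure~\ref{fig:doublesharp}, as in the proofs of Lemma~\ref{lem:stable} and Proposition~\ref{prop:kz2n+1}. We split the generators into three groups: Reeb chords lying entirely in $\La_1$ or in $\La_2$; the chords of the inserted $m(9_{46})$ layer ($a_i,b_i,c_i,d_i$); and the new chords $B^j_{kl}$ created by the crossed connect sums. Only disks passing through the gluing region have their differentials changed. We record these differentials in the $2\times 2$ matrix form used for $\La_{kz^{2n}+1}$:
\[
\partial_0 B^j_- = \begin{pmatrix}0&\ast\\0&0\end{pmatrix}B^j_+ + B^j_+\begin{pmatrix}0&\ast\\0&0\end{pmatrix},
\]
where the $\ast$ entries are $y_j$ or words in the $b_i$ of the $m(9_{46})$ layer. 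The modified $\partial_0 a_1,\partial_0 a_3$ (and those of the $a^j_{kl}$) pick up products of $B^j_{11}$ and $B^j_{22}$ times $y_j$ or $b_i$.

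Next we analyze the $m(9_{46})$ layer exactly as in Example~\ref{ex:946}. After dg-homotopy there are two possibilities for an augmentation: the ``red'' type $\e(b_1)\neq 0$, $\e(b_4)=0$, and the ``blue'' type $\e(b_1)=0$, $\e(b_4)\neq 0$. The equations $\partial_0 a_i=0$, combined with the matrix equations for $B^j$, should then force the following. In the red case, the coupling through $B^1$ (respectively $B^2$) implies $\e(y_1)\neq 0$ and $\e(y_2)=0$; in the blue case, the reverse holds. This is the analogue of the step ``$\e(b_1^n)=0$ forces $\e(y_1)=0$'' in the proof of Proposition~\ref{prop:kz2n+1}. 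Concretely, the degree $-1$ entries $B^j_{21}$ give homotopies, and the degree $0$ relations $\partial_0 B^j_{12}=0$ read roughly $y_j B^j_{22} \pm B^j_{11}\, w =0$, where $w$ is the relevant layer chord. These relations, together with invertibility of the $B^j_{jj}$ forced by $\partial_0 a^j_{11}, \partial_0 a^j_{22}$, tie $\e(y_j)$ to the vanishing or nonvanishing of the layer chords.

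Finally, within each case we would use the remaining homotopies (differentials of $b_2,b_3,b_5,b_6$ and of the $B^j_{21}$) to normalize the layer and connecting chords to fixed values, up to constants determined by $\La_1,\La_2$, as in the table of Proposition~\ref{prop:kz2n+1}. After this normalization, $\partial_0$ restricted to the generators of $\La_i$ agrees with $\partial_0^{\La_i}$, possibly after an invertible rescaling of the homology variables $t^j_i$ absorbed by $\e(B^j_{jj})$. This gives a bijection, compatible with homotopy, between each case and the corresponding product $\A ug^{\e(y_1)\neq0}(\La_1)\times\A ug^{\e(y_2)=0}(\La_2)$ or $\A ug^{\e(y_1)=0}(\La_1)\times \A ug^{\e(y_2)\neq0}(\La_2)$. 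The hypothesis that the two strands lie on the same component and differ in Maslov potential by $1$ is what makes the $y_i$ degree $0$. It also ensures that no new free $\C^*$ parameters appear: the $B^j_{jj}$ values are determined by the $t$-variables, rather than contributing extra torus factors as they did for $\La_{kz^{2n}+1}$.

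The main obstacle is the second step: proving that the two cases are exhaustive and disjoint, and that the products appear with no extra parameters or identifications. This requires checking that the homotopies coming from $\partial_0 B^j_{21}$ and $\partial_0 b_i$ act freely and transitively on the gluing data and do not also identify augmentations of $\La_i$ themselves. I would attack it by first fixing the $m(9_{46})$ layer, then solving for the $B^j$ from left to right, tracking signs from the shaded region.
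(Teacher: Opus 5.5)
Your route is the paper's: compute $\partial_0$ of the glued front, split by the two augmentation types of the $m(9_{46})$ layer (via $\partial_0 d_1=b_1b_4$), normalize by homotopy, and absorb the surviving invertible values into the $t$-variables of $\La_1,\La_2$. However, the mechanism you predict in the second step is not what the differential gives, and since the proof is exactly this computation, that matters.

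\textbf{The case correspondence is reversed.} In the red case ($\e(b_1)\neq 0$, $\e(b_4)=0$), the relation $\partial_0 a_2=1+b_3b_4B^2_{11}+b_1B^1_{11}b_6$ forces $\e(B^1_{11})\e(b_6)\neq 0$. The relation $\partial_0 B^1_{12}=0$, whose terms are $b_4$ and $y_1$ times entries of the now invertible $B^1$ block, then forces $\e(y_1)=0$. Finally $\partial_0 a_3=1+b_6y_2+y_1B^2_{22}$ gives $\e(y_2)=-\e(b_6)^{-1}\neq 0$. In the blue case, $\partial_0 a_1$ gives $\e(b_3)=1$, so $\partial_0 a_2$ gives $\e(b_4)\e(B^2_{11})=-1$. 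This invertible coefficient in $\partial_0 b_6=c_2b_4B^2_{11}-y_1B^2_{21}$ lets you homotope $\e(b_6)$ to $0$. After that, $\partial_0 a_3$ gives $\e(y_1)\e(B^2_{22})=-1$, and the $B^2$ relation gives $\e(y_2)=0$. The final union is symmetric in the two cases, so the statement survives, but your claim ``red $\Rightarrow \e(y_1)\neq 0$'' is false.

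\textbf{Invertibility of all the $\e(B^j_{jj})$ is not forced.} In each case exactly one block has forced-invertible diagonal entries, with values $s_1,s_2\in\C^*$: $B^2$ in the blue case and $B^1$ in the red case. This comes from $\partial_0 a_2$, $\partial_0 a_3$ and $\partial_0 a^2_{11}$, not from the $a^j_{kk}$ as you suggest. The other block is pure gauge: the invertible layer chord multiplies $B^j_{21}$ in $\partial_0 B^j_{11}$ (e.g.\ $b_4B^1_{21}$ in the blue case). So its diagonal can be homotoped to $0$, as in the paper's table.

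This asymmetry is what ties each case to one of the two products. It also answers your worry about extra parameters: only $s_1,s_2$ survive in each case, and the paper absorbs them into the $t$-variables of $\La_1,\La_2$ (e.g.\ $t^1_1=s_2^{-1}s_1t$, $t^1_2=s_1^{-1}$, $t^2_2=s_2$ in the blue case). Treating all four diagonal entries as free $\C^*$-parameters would introduce spurious parameters that are really homotopy directions.

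With these corrections your outline becomes the paper's proof. The exhaustiveness and freeness of the homotopy action, which you flag as the main obstacle, is handled there only through the explicit normalization table rather than by a separate argument.
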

    \begin{proof}
    Since $|\pi_0(\La_1\  \sharp_{m(9_{46})} \ \La_2)|= |\pi_0(\La_1)|+|\pi_0(\La_2)|-1$, we only decorate the diagram in Figure~\ref{fig:doublesharp} with one base point $t$. Observe that $\La_1\ \sharp_{m(9_{46})} \ \La_2$ has additional Reeb chords in comparison to $\La_1\sqcup \La_2\sqcup m(9_{46})$ with degrees
    $|B^j_{11}|=|B^j_{22}|=|B^j_{12}|-1=|B^j_{21}|+1=0, j=1,2.$
        The truncated differential of $\La_1\ \sharp_{m(9_{46})} \ \La_2$ is given by:
        \begin{align*}
             \dd_0 a_1&= 1-b_1-b_3-b_1b_2b_3 
             &\dd_0 b_2&=-b_4c_1\\
             \dd_0 a_2&=1+b_3b_4B^2_{11}+b_1B^1_{11}b_6
             &    \dd_0 b_5&= c_1b_1B^1_{11}+B^1_{21}\\
             \dd_0 a_3&=1+b_6y_2+y_1B^2_{22}
             &  \dd_0 b_3&=-b_1B^1_{11}c_2\\
             \dd_0 d_1&=b_1b_4
             &   \dd_0 b_6&=c_2b_4B^2_{11}-y_1B^2_{21}\\
             \dd_0 d_2&=(1+b_2b_1)B^1_{11}+b_4b_5
             &&
             \end{align*}  
            
             \begin{align*}
             \dd_0 B_-^1&= \left(\begin{array}{cc}
                  0&-b_4  \\
                  0&0 
             \end{array}\right)B_+^1+ \left(\begin{array}{cc}
                  0&-y_1  \\
                  0&0 
             \end{array}\right) B_+^1\\
            \dd_0 B_-^2&= \left(\begin{array}{cc}
                  0&b_5y_1-B_{22}^1  \\
                  0&0 
             \end{array}\right)B_+^1+ \left(\begin{array}{cc}
                  0&-y_2  \\
                  0&0 
             \end{array}\right) B_+^1\\
        \dd_0 b_-^i&= \begin{pmatrix} 0 & y_i\\ 0&0\end{pmatrix}b_+^i+ b_+^i\begin{pmatrix} 0 & C_i\\ 0&0 \end{pmatrix}
        \end{align*}

Let $\bar{x}_0^i$ denote the $2\times 2$ matrices:
            $\begin{pmatrix} -x_{11}^i & 0\\ x_{21}^i & x_{22}^i \end{pmatrix},$
        \begin{align*}
            \dd_0 \left(\begin{array}{cc}
                  a^1_{11}&0 \\
                  a^1_{21}&a^1_{22} 
             \end{array}\right) &= \left(\begin{array}{cc}
                  t+b_2a^1_{21}&0  \\
                  0& 1+a^1_{21}C_1 
             \end{array}\right) + \bar{B}_0^1\bar{b}_0^1\\
            \dd_0 \left(\begin{array}{cc}
                  a^2_{11}&0 \\
                  a^2_{21}&a^2_{22} 
             \end{array}\right)&= 
              \left(\begin{array}{cc}
                  1-(b_5y_1-B^1_{22})a^2_{21}&0  \\
                  0& 1+a^2_{21}C_2 
             \end{array}\right)+\bar{B}^2_0\bar{b}_0^2.
        \end{align*}

The $C_i$ are words of Reeb chords in $\La_i$ not shown such that the truncated differential $\dd_0^{\La_i}$ of $\La_i$ is
              \begin{align*}
             \dd_0^{\La_i} \left(\begin{array}{cc}
                  a^i_{11}&0 \\
                  a^i_{21}&a^i_{22} 
             \end{array}\right)&= 
              \left(\begin{array}{cc}
                  t^1_i+y_i(t_i^2)^{-1}a^i_{21}&0  \\
                  0& 1+a^i_{21}C_i 
             \end{array}\right)+
             \left(\begin{array}{cc}
                1&0  \\
                  0& t^2_i
             \end{array}\right)\bar{b}_0^i
             \end{align*}

There are two types of augmentations $\e(b_1)=0$ and $\e(b_1)\neq 0$:

$$\begin{array}{|c|c|c|c|c|c|c|c|c|c|c|c|c|c|c|c|}
\hline
         &b_1&b_2&b_3&b_4&b_5&b_6 &y_1&y_2&B^1_{11}& B^1_{22} & a^1_{21}&B^2_{11}& B^2_{22} & a^2_{21}
         \\  \hline 
\e_{b_1=0}&0&0&1&-s_1^{-1}&0&0&-s_2^{-1}&0&0&0&s_1t&s_1&s_2&0\\
\hline
         \e_{b_1\neq 0}&1&-1&0&0&0&-s_1^{-1}& 0&s_1&s_1& s_2 & 0 &0&0&-s_2^{-1}\\
         \hline
\end{array}$$

Let $\e(b_1)=0,$ then $\e(b_3)=1$, and $\e(B^2_{11})\e(b_4)=-1$. Since $\e(B^2_{11}):=s_1\neq 0$, then $\e(b_4)=-s_1^{-1}$. It follows that a dg-algebra homotopy takes $\e(b_2), \e(b_5),$ and $\e(b_6)$ to $0$. 
From the differential of $B^2_{12}$, $\e(B^2_{22}):s_2\neq 0$, and $\e(y_1)=-s_2^{-1}$. A dg-algebra homotopy sets $\e(B^1_{11})$, $\e(B^2_{22})$, and $\e(a^2_{21})$ equal to $0$. Then, $\e(y_2)=0$, $\e(a^1_{21})=s_1t$. The augmentation on the Reeb chords of $\La_1\sharp_{m_(9_{46})}\La_2$ not shown in Figure~\ref{fig:doublesharp} can be identified with augmentations $\e_1$ of $\La_1$ and $\e_2$ of $\La_2$ such that $\e_1 (y_1)=-s_2^{-1}$ and $\e_2(y_2)=0$. In particular, we set $t^1_1= s_2^{-1} s_1 t$, $t^2_1=1$, $t^1_2= s_1^{-1}$, and $t^2_2=s_2$ so that both $s_1,$ and $s_2$ are determined by $\A ug^{\e(y_2)=0}(\La_2)$ and $\A ug^{\e(y_1)\neq 0}(\La_1)$. Then, we have that $$\A ug^{\e(b_1)=0}(\La_1\sharp_{m_(9_{46})}\La_2)\cong \A ug ^{\e (y_1)=-s_2^{-1}}(\La_1)\sqcup \A ug ^{\e (y_2)=0}(\La_2).$$

Similarly, if $\e(b_1)\neq 0$, we have that $\e(b_4)=0$, $\e(B^1_{11}):=s_1\neq 0$, and $\e(b_3)$ and $\e(b_5)$ can be set equal to $0$ with a dg-homotopy. Then we get $\e(b_1)=1, \e(b_2)=-1, \e(b_6)=-s_1^{-1}, \e(y_2)=s_1, \e(y_1)=0,$ Then, from $\partial a_{11}^2$ we know that $\e(B^1_{22}):=s_2\neq 0$, and  $\e(a^2_{21})=-s_2^{-1}$. Additionally, a dg-homotopy sets $\e(B^2_{11}), \e(B^2_{22})$ equal to $0$. The augmentation on the Reeb chords of $\La_1\sharp_{m_(9_{46})}\La_2$ not shown in Figure~\ref{fig:doublesharp} can be identified with augmentations $\e_1$ of $\La_1$ and $\e_2$ of $\La_2$ such that $\e_1(y_1)=0$ and $\e_2(y_2)\neq 0$.  We achieve this by setting $t^2_2= -s_2^{-1}$ and $t^2_1=1$. Then, we have the homeomorphism 

$$\A ug^{\e(b_1)\neq 0}(\La_1 \ \sharp_{m(9_{46})} \ \La_2)\cong \A ug^{\e(y_1)= 0}(\La_1)\times \A ug^{\e(y_2)\neq 0}(\La_2).$$ 

    \end{proof}

Next, we compute the augmentation variety of $\La_X \ \natural \ \La_Y$ for a restricted class of Legendrians.
\begin{figure}[!ht]
\begin{minipage}{2.5in}
    \centering
\includegraphics[width=0.9\linewidth]{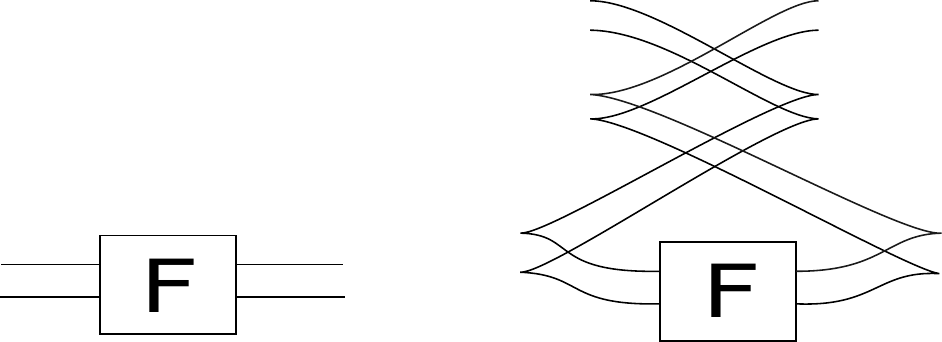}
    \caption{Satellite isotopy.}
    \label{fig:sat}
    \end{minipage}
    \begin{minipage}{3.5in}
    \centering
    \labellist
    \tiny{
    \pinlabel $y_1$ at  130 164
    \pinlabel $y_2$ at 180 158
    \pinlabel $y$ at 180 95
    \pinlabel $b_4$ at 13 91
    \pinlabel $b_7$ at 13 65
    }
    \endlabellist
\includegraphics[width=0.9\linewidth]{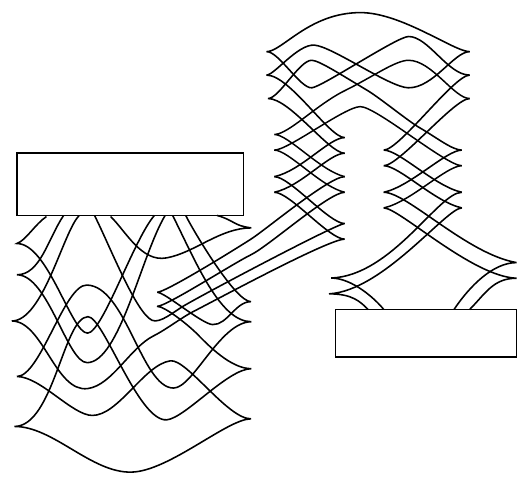}
    \caption{The front of $\La_X   \ \natural \   \La_Y$ after Legendrian isotopy.}
    \label{fig:doubleRI}
    \end{minipage}
\end{figure}

\begin{proposition}\label{prop:aug_natural}
Let $\La_X$ and $\La_Y$ be X-type and Y-type fronts respectively. We further assume that, for $\alpha=X,Y$, the augmentation variety $\A ug(\La_\alpha)$ admits a decomposition  
$$\A ug(\La_\alpha)=\A ug'(\La_\alpha)\sqcup pt,$$
where the point corresponds to the ruling of $\La_\alpha$ that contributes a constant term $1$ to the ruling polynomial $R_{\La_\alpha}(z)$.
Then,
    $$\A ug(\La_X\  \natural \ \La_Y) \cong \A ug'(\La_X)\sqcup \A ug'(\La_Y) \sqcup pt.$$
\end{proposition}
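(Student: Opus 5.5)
Following Remark~\ref{rem:XYsum}, I will decompose $\La_X\ \natural\ \La_Y$ as $S(\La_X)\ \sharp_{m(9_{46})}\ \La_Y$, using the front of Figure~\ref{fig:doubleRI}. The Reeb chord $y_1$ of the local model lives in the stabilized part of $S(\La_X)$ (adjacent to the crossings $b_4,b_7$), and $y_2$ lives in the distinguished upper subfront of $\La_Y$. The argument then has three steps: identify the two pieces of $\A ug(S(\La_X))$ distinguished by $\e(y_1)$, do the same for $\La_Y$ and $\e(y_2)$, and combine them with Proposition~\ref{prop:doublesharp}.

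\emph{Step 1: $S(\La_X)$.} By Lemma~\ref{lem:stable},
$$\A ug(S(\La_X))\cong \A ug^{\e(b_1)=0}(\La_X)\sqcup \A ug^{\e(b_4)=0}(\La_X)\sqcup \A ug^{\e(b_4)=0}(\La_X),$$
where the three pieces come from $\e_{22},\e_{12},\e_{11}$ in \eqref{eqn:stab}. I then match these pieces with the hypothesis $\A ug(\La_X)=\A ug'(\La_X)\sqcup pt$. On the $X$-subfront, $\e(b_1)\neq 0$ (equivalently $\e(b_4)=0$) is the red-type augmentation, which corresponds to the ruling contributing $1$; it is the point. The blue type, $\e(b_1)=0$, is $\A ug'(\La_X)$. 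I would verify this by comparing with the ruling correspondence of Henry--Rutherford, checking it on $\La_k$ and $\La_{kz^{2n}+1}$ via Example~\ref{ex:946} and the proof of Proposition~\ref{prop:kz2n+1}. With this identification,
$$\A ug(S(\La_X))\cong \A ug'(\La_X)\sqcup pt\sqcup pt.$$
Next I read off $\e(y_1)$ on each piece from the differential near $b_7,b_{10}$ and the chord $y_1$. The claim to verify is that $\e(y_1)=0$ exactly on the $\e_{12}$-type piece (bottom layer blue, top red), and $\e(y_1)\neq 0$ on the other two. That gives
$$\A ug^{\e(y_1)\neq 0}(S(\La_X))\cong \A ug'(\La_X)\sqcup pt \qquad\text{and}\qquad \A ug^{\e(y_1)=0}(S(\La_X))\cong pt.$$
This matches Figure~\ref{fig:sum}: part (B) pairs the red and blue rulings of $X$ with the orange ruling of $Y$, while part (A) pairs a single red configuration with the non-orange rulings.

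\emph{Step 2: $\La_Y$.} Similarly, I show $\A ug^{\e(y_2)=0}(\La_Y)\cong pt$, namely the orange ruling's augmentation, and $\A ug^{\e(y_2)\neq0}(\La_Y)\cong \A ug'(\La_Y)$. The orange switch sits at the degree-$0$ crossing $y_2$ between the top two strands, which belong to the same component. The augmentation attached to a ruling switching there should have $\e(y_2)$ forced to $0$ after homotopy, as happens for $y_2$ in the table of Proposition~\ref{prop:kz2n+1}. I check this on the model fronts $\La_{kz^{2n}+1}$ and $\La_k$, whose augmentation varieties are computed explicitly there.

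\emph{Step 3: combine.} Proposition~\ref{prop:doublesharp} gives
$$\A ug\bigl(S(\La_X)\ \sharp_{m(9_{46})}\ \La_Y\bigr)\cong \bigl(\A ug'(\La_X)\sqcup pt\bigr)\times pt \ \sqcup\ pt\times \A ug'(\La_Y),$$
which is $\A ug'(\La_X)\sqcup \A ug'(\La_Y)\sqcup pt$. The main obstacle is Steps 1--2: pinning down the value of $\e(y_i)$ on each component, since this requires tracking the chords introduced by the Reidemeister~I moves and the satellite isotopy of Figure~\ref{fig:sat}. If the general X/Y-type hypotheses do not force it, I would record it as part of the assumed decomposition, namely that the distinguished point is characterized by $\e(b_4)=0$, respectively $\e(y_2)=0$.
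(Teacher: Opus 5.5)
\textbf{The identifications in Steps 1 and 2 are reversed.} Your skeleton matches the paper's: use the front of Figure~\ref{fig:doubleRI}, apply Lemma~\ref{lem:stable}, split by the vanishing of $\e(y_1)$ and $\e(y_2)$, and finish with Proposition~\ref{prop:doublesharp}. But Steps 1 and 2 carry all the content, and each is the reverse of what holds. Your final formula comes out right only because the two reversals cancel.

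On $S(\La_X)$, it is $\e(y_1)\neq 0$ that forces $\e=\e_{12}$. So $\A ug^{\e(y_1)\neq 0}(S(\La_X))$ is the isolated point, and $\A ug^{\e(y_1)=0}(S(\La_X))\cong \A ug(\La_X)=\A ug'(\La_X)\sqcup pt$, coming from the $\e_{22}$ and $\e_{11}$ pieces.

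On $\La_Y$, the condition $\e(y_2)\neq 0$ is equivalent to $\e(y)\neq 0$, and it cuts out exactly the distinguished point. So $\A ug^{\e(y_2)\neq 0}(\La_Y)=pt$ and $\A ug^{\e(y_2)=0}(\La_Y)\cong \A ug'(\La_Y)$. Proposition~\ref{prop:doublesharp} then gives $pt\times \A ug'(\La_Y)\sqcup \A ug(\La_X)\times pt$.

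Your Step 2 is contradicted by the very table you cite. In Proposition~\ref{prop:kz2n+1}, the isolated point $\e_{b_1^n\neq 0}$ has $\e(y_1)=1$ and $\e(y_2)=-1$. The $(\C^*)^{2n}$ components are the ones with $\e(y_1)=\e(y_2)=0$. Your heuristic is also backwards: under the ruling/augmentation correspondence a switched crossing is sent to a nonzero value (cf.\ the remark after Definition~\ref{defn:fillings_n}), not forced to vanish.

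\textbf{Why this is a genuine gap.} If you carried out the Step 2 verification, it would fail. Proposition~\ref{prop:doublesharp} pairs $\e(y_1)\neq 0$ with $\e(y_2)=0$, so correcting only one of your two steps produces $(\A ug'(\La_X)\sqcup pt)\times \A ug'(\La_Y)\sqcup pt$, not the claimed disjoint union.

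Your fallback would not rescue the argument. Adding ``the distinguished point of $\La_Y$ is characterized by $\e(y_2)=0$'' as a hypothesis makes the proposition inapplicable to $\La_{kz^{2n}+1}$, which is exactly where it is used in the proof of Theorem~\ref{thm:aug}.

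What you need are the correct two facts:
\begin{itemize}
\item the distinguished point of $\La_Y$ is $\{\e(y)\neq 0\}=\{\e(y_2)\neq 0\}$;
\item the only augmentation of $S(\La_X)$ with $\e(y_1)\neq 0$ is $\e_{12}$.
\end{itemize}
These are what collapse the two products in Proposition~\ref{prop:doublesharp} to $\A ug'(\La_Y)$ and $\A ug(\La_X)$ respectively.
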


\begin{proof}

We compute the augmentation variety of the isotopic front of $\La_X  \ \natural \  \La_Y$ shown in Figure~\ref{fig:doubleRI} (see Remark~\ref{rem:XYsum}). By assumption, $\A ug^{\e(b_4)=0}(\La_X)$ is a point and $\A ug^{\e(b_1)=0}(\La_X)\cong\A ug'(\La_X)$. By Lemma~\ref{lem:stable} $\A ug (S(\La_X))\cong \A ug(\La_X)\sqcup pt$, where $\e_{12}$, as labeled in Equation~\eqref{eqn:stab}, corresponds to the isolated point.
If $\e(y_1)\neq 0$ on $S(\La_X)$ (with the additional Reidemeister I moves), then $\e$ is $\e_{12}$ from Equation ~\eqref{eqn:stab}, and therefore corresponds to a point in the augmentation variety of $S(\La_X)$. So $\A ug^{\e(y_1)\neq 0}S(\La_X)$ is a point and $\A ug^{\e(y_1)=0}S(\La_X)\cong \A ug(\La_X)$. For $\La_Y$ (with the additional Reidemeister I moves), the condition $\e(y_2)\neq 0$ is equivalent to $\e(y)\neq 0$, and any such augmentation corresponds to a point in $\A ug(\La_Y)$. Hence, $\Aug^{\e(y)\neq 0}(\La_Y)=pt$ and $\Aug^{\e(y)=0}(\La_Y)\cong \A ug'(\La_Y)$. Finally, by Proposition~\ref{prop:doublesharp}, 
 \begin{align*}\A ug(\La_X   \ \natural \   \La_Y)&\cong\A ug(S(\La_X) \ \sharp_{m(9_{46})}\  \La_Y)\\
 &\cong \A ug^{\e(y_1)\neq 0}(S(\La_X))\times \A ug^{\e(y_2)=0}(\La_Y)  \sqcup \A ug^{\e(y_1)=0}(S(\La_X))\times \A ug^{\e(y_2)\neq 0}(\La_Y)\\
 &\cong pt \times \A ug'(\La_Y) \sqcup \A ug(\La_X)\times pt\\
&\cong \A ug'(\La_Y)\sqcup \A ug'(\La_X)\sqcup pt
 \end{align*}
    
\end{proof}

\noindent We end this section by proving Theorem~\ref{thm:aug}.

\begin{proof}[Proof of Theorem~\ref{thm:aug}]
By definition $\A ug(\La_1\sqcup \La_2)\cong \A ug(\La_1)\times \A ug(\La_2)$. From Example~\ref{ex:7_4} we know that $\A ug(\La_{z^2})\cong (\C^*)^2$. As in the proof of Theorem~\ref{thm:main}, it suffices to consider the case when the constant term of $P(z^2)$ is non-zero. For $P(z^2)=\displaystyle{\sum_{k=0}^N a_k z^{2n_k}}$, recall that
$$\La_{P(z^2)}=\La_{a_0}\  \natural  \ \La_{a_1z^{2n_1}+1}\  \natural  \ \La_{a_2z^{2n_2}+1}\  \natural \cdots \natural\  \La_{a_Nz^{2n_N}+1}.$$
Recall also that $\La_k$ and $\La_{kz^{2n}+1}$ satisfy the conditions of Proposition~\ref{prop:aug_natural} for $n,k\geq 1$. By Proposition~\ref{prop:aug_natural}, Corollary~\ref{cor:aug_stack_n} and Proposition\ref{prop:kz2n+1}, 

\begin{align*}\A ug(\La_{P(z^2))}&\cong \sqcup_{k=1}^N \A ug (\La_{a_kz^{2n_k}+1})\sqcup \A ug(\La_{a_0-1})  \sqcup pt\\
& \cong \sqcup_{k=0}^N(\sqcup_{j=1}^{a_k}(\C^*)^{2n_k})\sqcup (\sqcup_{j=1}^{a_0-1} pt) \sqcup pt
\\
& \cong \sqcup_{k=0}^N(\sqcup_{j=1}^{a_k}(\C^*)^{2n_k})\sqcup (\sqcup_{j=1}^{a_0} pt).
\end{align*}

In particular, $\A ug(\La_{P(z^2)})$ is a disjoint union of $P(1)$ components. Each monomial $a_kz^{2n_k}$ of $P(z^2)$ corresponds to $\sqcup_{j=1}^{a_k}(\C^*)^{2n_k}$ in the augmentation variety $\A ug(\La_{P(z^2)})$. Each connected component has a single cluster chart and thus the augmentation variety has a trivial cluster structure.
\end{proof}

\section{Smoothly non-isotopic fillings}\label{sec:exotic}

In this section we prove Theorems~\ref{intro:prop:notsmooth},~\ref{thm:diff} and~\ref{thm:notsmooth}. We describe a strategy for detecting when two smooth immersed fillings $L_1, L_2$ are not smoothly isotopic, generalizing~\cite[Proposition 3.1]{Li_Tange_2020}. Instead of studying the pair of fillings $L_1, L_2\subset \mathbb{B}^4$ directly, we follow the common strategy of studying the $4$-manifolds with boundary given by their exteriors $W_1:=\mathbb{B}^4\backslash \nu(L_1)$ and $W_2:=\mathbb{B}^4\backslash \nu(L_2)$. If two fillings are smoothly isotopic rel. boundary then one can argue that their exteriors are diffeomorphic, as well as any two $4$-manifolds $X_1, X_2$ given by further attaching identical $2$-handles to both exteriors. We obstruct the $4$-manifolds by showing that one admits a Weinstein structure and the other does not.

See~\cite{gompf19994} for a general introduction to $4$-manifolds and Kirby calculus. A smooth immersed ribbon filling of a knot $\Lambda$ with transverse double points is given by a series of band moves and crossing changes on $\La$ that turn it into an unlink. Each component of the unlink is a carved $1$-handle. Every band move is encoded by adding a $0$-framed $2$-handle as shown in Figure~\ref{fig:ribbon_exterior}, and every crossing change is encoded by adding a $0$-framed $2$-handle as shown in Figure~\ref{fig:ribbon_exterior_double_point}. Thus, for an immersed ribbon filling $L$, the exterior $W_{L}$ has a Kirby diagram given by a link of unknots $K_{L}=(\cup_i H_i)\cup (\cup_j N_j)$ composed of an unlink of carved 1-handles $\cup_iH_i$ and a union of $0$-framed 2-handles $\cup_j N_j$. See~\cite[Section 6.2]{gompf19994} for more details and examples of exteriors of ribbon fillings.

\begin{figure}[H]
 \begin{tikzpicture}[scale=1]
\node[inner sep=0] at (0,0){\includegraphics[width=0.5\textwidth]{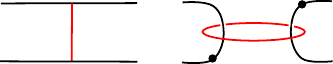}};
\node at (2.5,-0.5) {\textcolor{red}{$0$}};
\end{tikzpicture}
\caption{On the left is a band move, on the right is the corresponding handlebody diagram of the exterior of the ribbon surface.}
\label{fig:ribbon_exterior}
\end{figure}

\begin{figure}[H]
\centering
 \begin{tikzpicture}[scale=1]
\node[inner sep=0] at (0,0){\includegraphics[scale=1]{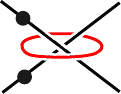}};
\node at (1,-0.2) {\textcolor{red}{$0$}};
\end{tikzpicture}
\caption{The exterior of an immersed surface near a double point obtained by a crossing change.}
\label{fig:ribbon_exterior_double_point}

\end{figure}

We need to compare pairs of exteriors with additional unknots added to both of them. In order to do so, we first define ``comparable'' Kirby diagrams of the exteriors. We invite the reader to reference examples shown in Figure~\ref{fig:m821} and Figure~\ref{fig:U_compatible} when reading Definition~\ref{defn:comparable} and Definition~\ref{defn:compatible}.

\begin{definition}\label{defn:comparable}
Let $L_1$ and $L_2$ be two smooth immersed ribbon fillings with transverse double points such that $\partial L_1=\partial L_2$. Suppose there exist Kirby diagrams $K^\ell=(\cup_{i=1}^n H^\ell_i)\cup (\cup_{j=1}^m N^\ell_j)\subset \R^3$ of $W_{\ell}$ for $\ell=1,2$, and a division $\R^3=S\cup T$ satisfying the following conditions:
\begin{enumerate}
    \item $S$ is the union of finitely many open sets containing all of the $2$-handles in either $K^1$ or $K^2$;
    \item $H^\ell_i\cap T\neq \emptyset$ for all $1\leq i\leq n$ and $\ell=1,2$;
    \item as sets $\sqcup_{i=1}^n H^1_i\cap T=\sqcup_{i=1}^n H^2_i\cap T$.
\end{enumerate}
Then, we say $W_1$ and $W_2$ have \textbf{comparable Kirby diagrams}. 

\end{definition}

\begin{definition}\label{defn:compatible}
Let $L_1$ and $L_2$ be two smooth immersed ribbon fillings with transverse double points such that $\partial L_1=\partial L_2$. Suppose the exteriors $W_1, W_2$ of $L_1,L_2$ have comparable Kirby diagrams $K^1,K^2\subset \R^3=T\cup S$ as above and contain $n>1$ carved $1$-handles. Suppose also that $K^1$ contains at least one unknotted $0$-framed $2$-handle.

A \textbf{$U$-compatible system} on two such comparable Kirby diagrams $K^1$ and $K^2$ is a disjoint union of unknots $U_1, \ldots, U_{n}$ contained in the interior of $T$ satisfying the following conditions:

\begin{enumerate}
    \item at least one unknot $U_i$ has linking number $0$ with a single carved $1$-handle and can be isotoped in $W_1$ so that it is adjacent to a $0$-framed $2$-handle as shown in Figure~\ref{fig:local_1};
    \item for a fixed order on the carved $1$-handles of $W_2$, $H^2_{i_1}, \ldots, H^2_{i_n}$, the unknots $U_j$ intersect only $H^2_{i_j}$ and $H^2_{i_{j+1}}$ in $K^2$ for $1\leq j\leq n-1$ with linking number $\pm1$,
    \item the unknot $U_n$ 
    intersects only $H^2_{i_{1}}$ in $K^2$ with $lk(H^2_{i_1},U_n)=\pm1$.
\end{enumerate}
If there exists a $U$-compatible system of unknots $U_1, \ldots, U_{n}$ on comparable Kirby diagrams of $W_1$ and $W_2$, then we say the exteriors of $L_1$ and $L_2$ are $U$-compatible.
\end{definition}

\begin{figure}[H]
\centering
\begin{tikzpicture}[scale=1]
\node[inner sep=0] at (0,0){\includegraphics[width=0.45\textwidth]{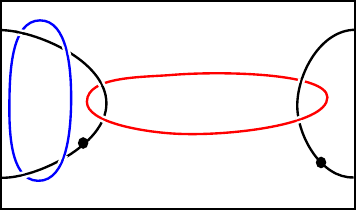}};
\node at (-1.8,1.4) {\textcolor{blue}{$U_1$}};
\node at (2,-0.8) {\textcolor{red}{$0$}};
\end{tikzpicture}
\caption{An exterior $W_1$ of a ribbon filling with an additional $(-1)$-framed $2$-handle $U_1$.}
\label{fig:local_1}
\end{figure}

\begin{figure}[!ht]
    \centering
 \begin{tikzpicture}[scale=1.5]
\node[inner sep=0] at (0,0){\includegraphics[width=0.5\textwidth]{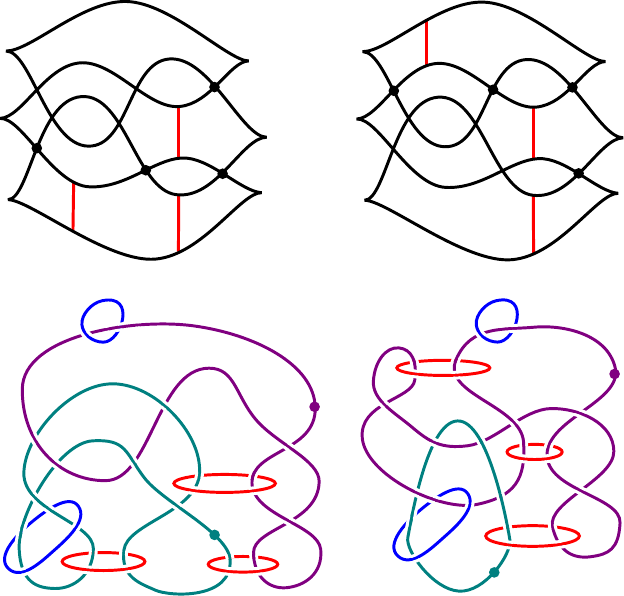}};
\node at (1,-0.8) {\textcolor{red}{$0$}};
\node at (0.5,-2.3) {\textcolor{blue}{$U_1$}};
\node at (1.8,-2.3) {\textcolor{red}{$0$}};
\node at (1.9,-1.5) {\textcolor{red}{$0$}};
\node at (-0.8,-1.8) {\textcolor{red}{$0$}};
\node at (-1.8,-2.5) {\textcolor{red}{$0$}};
\node at (-2.8,-2.3) {\textcolor{blue}{$U_1$}};
\node at (-0.6,-2.5) {\textcolor{red}{$0$}};
\node at (-0.6,-2.5) {\textcolor{red}{$0$}};
\node at (-2.2,0) {\textcolor{blue}{$U_2$}};
\node at (1.2,0) {\textcolor{blue}{$U_2$}};
\end{tikzpicture}
    \caption{Two fillings of $m(8_{21})$ that are not smoothly isotopic with their respective rulings and $U$-compatible exteriors shown below. }
    \label{fig:m821}
\end{figure}

\begin{example}\label{ex:m821}
 The naive augmentation variety quotiented by dg-homotopy of the Legendrian knot $m(8_{21})$ consists of two irreducible components that intersect each other. There are two embedded fillings known to belong to the two different irreducible components and which are shown in the top row of Figure~\ref{fig:m821}. Two comparable Kirby diagrams of the exteriors are shown in the bottom row with a $U$-comparable system of unknots $U_1,U_2$ in blue. Since the exteriors of the two fillings are $U$-compatible, Theorem~\ref{intro:prop:notsmooth} implies the fillings are not smoothly isotopic. For another example of two fillings of a Legendrian ($\La_3$) with $U$-compatible exteriors, see Figure~\ref{fig:U_compatible}. 
\end{example}

\begin{figure}[!ht]
    \centering
 \begin{tikzpicture}[scale=1]
\node[inner sep=0] at (0,0){\includegraphics[width=0.75\textwidth]{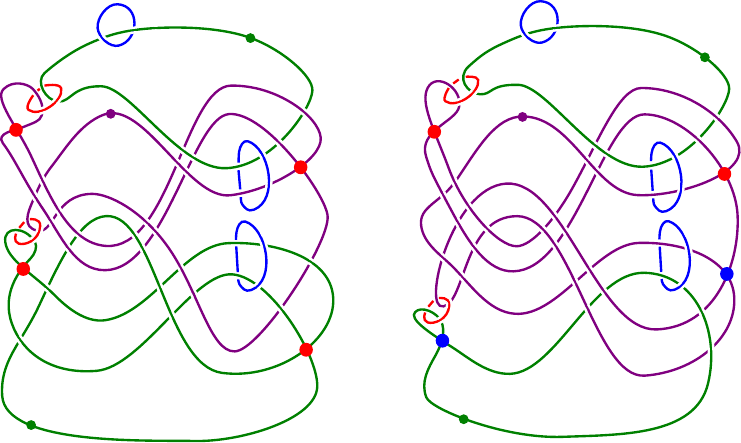}};
\node at (-0.6,2.6) {\textcolor{darkgreen}{$H'_1$}};
\node at (-0.3,0) {\textcolor{purple}{$H'_2$}};
\node at (-0.4,-2) {\textcolor{darkgreen}{$H'_3$}};
\node at (6,2.6) {\textcolor{darkgreen}{$H_1$}};
\node at (6.2,0) {\textcolor{purple}{$H_2$}};
\node at (5.9,-2) {\textcolor{darkgreen}{$H_3$}};
\node at (-1.3,0.2) {\textcolor{blue}{$U_2$}};
\node at (-2,-1.4) {\textcolor{blue}{$U_1$}};
\node at (5.3,0.2) {\textcolor{blue}{$U_2$}};
\node at (4.5,-1.3) {\textcolor{blue}{$U_1$}};
\node at (-5.8,0.3) {\textcolor{red}{$N'_1$}};
\node at (-5.6,2.7) {\textcolor{red}{$N'_2$}};
\node at (0.7,-0.8) {\textcolor{red}{$N_1$}};
\node at (1,2.7) {\textcolor{red}{$N_2$}};
\node at (-4.7,3) {\textcolor{blue}{$U_3$}};
\node at (2,3.2) {\textcolor{blue}{$U_3$}};
\end{tikzpicture}
    \caption{Two fillings $L_{1},L_{2}$ of $\La_3$ have $U$-compatible exteriors: pictured are the Kirby diagrams of the exteriors $W_{1},W_{2}$ and the unknots $U_1,U_2,U_3$ in blue. An isotopy takes $U_1$ in $W_{1}$ to the desired position adjacent to $N_1'.$ The blue and red dots denote the normal ruling corresponding to the filling.}
    \label{fig:U_compatible}
\end{figure}

We show that immersed ribbon fillings $L_1,L_2$ of $\La$ with $U$-comparable exteriors are not smoothly isotopic.

\begin{proof}[Proof of Theorem~\ref{intro:prop:notsmooth}]

Suppose, for the sake of a contradiction, that $L_1$ and $L_2$ are smoothly isotopic, where the isotopy is the identity on the boundary. Note that $\partial W_i= \partial(B^4\backslash \nu(L_i))=(S^3\backslash \nu(\La))\sqcup (L_i\times S^1)$. The isotopy between $L_1$ and $L_2$ induces a diffeomorphism $\varphi$ between $W_1$ and $W_2$. Since the isotopy between $L_1$ and $L_2$ is the identity on $S^3,$ we have that $\varphi$ is the identity map on $S^3\backslash \nu(\La)$. Fix two comparable Kirby diagrams of $W_1$ and $W_2$, and let $U_1, \ldots, U_n$ be the set of unknots from the $U$-compatible system on $W_1, W_2$. Attach to both $W_1$ and $W_2$ $(-1)$-framed $2$-handles along the unknots $U_1, \ldots, U_{n-1}$, as well as a $(-2)$-framed $2$-handle $U_n$. By construction the diffeomorphism $\varphi$ extends to $W_1\cup_{i=1}^{n} U_i$ and $W_2 \cup_{i=1}^{n} U_i$.

We now show that $W_1\cup_{i=1}^{n} U_i$ does not admit a Weinstein structure. As shown in Figure~\ref{fig:local_not_stein}, there exists a sequence of handle-slides and isotopies to a handlebody diagram that contains an isolated $(-1)$ framed $2$-handle. By~\cite{Lisca_Matic_1997}, there are no smoothly embedded square $(-1)$ spheres in any Weinstein 4-manifold. Therefore, $W_1\cup_{i=1}^{n} U_i$ does not admit a Weinstein structure. 

\begin{figure}[!ht]
\centering
\begin{tikzpicture}[scale=1.5]
\node[inner sep=0] at (0,0){\includegraphics[width=0.8\textwidth]{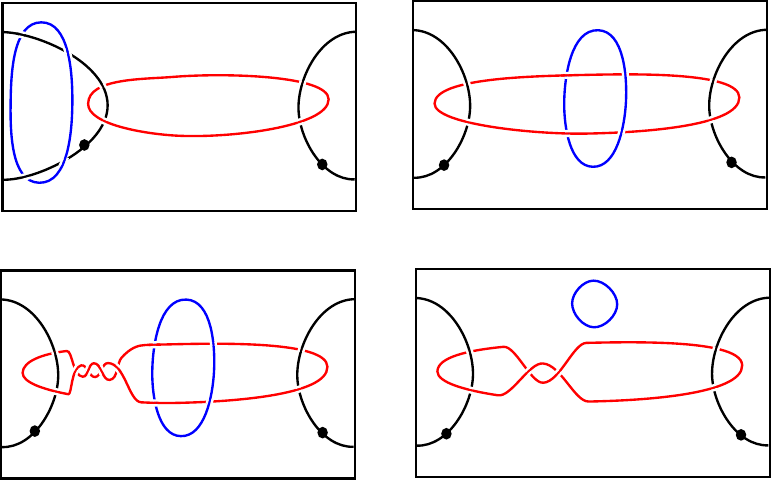}};
\node at (-1.5,1) {\textcolor{red}{$0$}};
\node at (-3.2,2.2) {\textcolor{blue}{$-1$}};
\node at (0,1.5) {$\rightarrow$};
\node at (0,1.8) {$(A)$};
\node at (2.8,1) {\textcolor{red}{$0$}};
\node at (2.8,-1.9) {\textcolor{red}{$-2$}};
\node at (-1.5,-1.9) {\textcolor{red}{$0$}};
\node at (2.8,2.2) {\textcolor{blue}{$-1$}};
\node at (-1.7,-0.8) {\textcolor{blue}{$-1$}};
\node at (2.7,-0.5) {\textcolor{blue}{$-1$}};
\node at (0,0) {$\swarrow$};
\node at (0,0.3) {$(B)$};
\node at (0,-1.5) {$\rightarrow$};
\node at (0,-1.2) {$(C)$};
\end{tikzpicture}
\caption{A series of isotopies and handleslides that produce a $4$-manifold without a Weinstein structure: (A) isotope the $(-1)$-framed 2-handle off of the carved 1-handle; (B) add canceling twists to the $0$-framed $2$-handle; (C) a blow-down handleslide.}
\label{fig:local_not_stein}
\end{figure}

Next, we show that $W_2\cup_{i=1}^{n} U_i$ does admit a Weinstein structure. We rely on the topological characterization of Weinstein $4$-manifolds of Eliashberg~\cite{eliashberg} and Gompf~\cite[Theorem 1.3, Proposition 2.3]{gompf19994}. Namely, a smooth, compact, connected, oriented $4$-manifold $X$ admits a Weinstein structure (inducing the given orientation) if and only if it can be presented
as a handlebody by attaching $2$-handles to a framed link in $\partial(\mathbb{D}^4\cup $1$-\text{handles})=\sharp^m(S^1\times S^2)$, where the link is drawn in a Legendrian standard form and the framing coefficient on each link component $\La$ is given by $tb(\La)-1$. Let $W_{2}$ have a Kirby diagram given by $K^2=(\cup_{i=1}^n H^2_i)\cup (\cup_{j=1}^m N^2_j)$ where $H_i^2$ are carved $1$-handles, and $N_j^2$ are $0$-framed $2$-handles corresponding to band moves and double points of $L_2$ respectively. We show that through a sequence of handle slides and isotopies, there is another Kirby diagram of $W_2$ such that every carved $1$-handle is an unknot with no crossings, and every $2$-handle $\La$ is in Legendrian standard form with framing $tb(\La)-1$.

We begin by eliminating all but one of the carved $1$-handles in $K^2$ with isotopies and handle slides. Consider the graph $\Gamma$ whose vertices are the carved $1$-handles and edges between two vertices $H^2_i$ and $H^2_j$ are given by any $(-1)$-framed $2$-handle $U_k$ that links $H^2_i$ and $H^2_j$. By construction, $\Gamma$ is a linear graph with $n$ vertices. Label the edges $e_1, \ldots, e_{n-1}$ corresponding to the unknots $U_1, \ldots, U_{n-1}$. Label the vertices $v_1, \ldots, v_n$ corresponding to the carved $1$-handles $H^2_{i_1}, \ldots, H^2_{i_{n}}$. Starting with $H^2_{i_1}$, handleslide it over $H^2_{i_2}$ so it is no longer linked with $U_{1}$. By abuse of notation, let $H^2_{i_1}$ denote the carved $1$-handle after a handleslide. Observe that $H^2_{i_1}$ is the connect sum of $H^2_{i_1}$ and $H^2_{i_2}$, a disjoint pair of unknots. Therefore $H^2_{i_1}$ is still an unknot. Moreover, any $2$-handle that intersected $H^2_{i_2}$ with the exception of $U_{1}$ now also intersects $H^2_{i_1}$. By construction, $U_{1}$ and $H^2_{i_2}$ are now in canceling position, so we cancel them. The graph associated to the new handlebody diagram is the contraction of $\Gamma$ along the first edge $e_1$. We continue this process contracting the edges of $\Gamma$ until we have only one vertex $v_1$ left. We now have a single carved $1$-handle $H^2_{i_{1}}$ that is topologically an unknot with $0$-framed $2$-handles $N_1, \ldots, N_m$ and a $(-2)$-framed $2$-handle $U_{n}$. Perform smooth isotopies until the carved $1$-handle is an unknot with no crossings at the cost of adding crossings to $N_1, \ldots, N_m, U_n$. 

We end by showing that with additional handle-slides, every $2$-handle $\La$ can be drawn in Legendrian standard form with the correct framing $tb(\La)-1$. First, since $U_n$ has framing $-2$ it has Legendrian representative with $tb$ equal to $-1$ respectively. To ensure that each $N_j$ has a Legendrian representative for $1\leq j\leq m$, perform an arbitrary number of handleslides of $N_j$ along $U_{n}$ to lower the framing arbitrarily on $N_j$. Recall that a handleslide of a $2$-handle $K_1$ with framing $f_1$ along a $2$-handle $K_2$ with framing $f_2$ results in the $2$-handle $K_1'$ given by the band connect sum of $K_1$ and $K_2$ with framing $f_1+f_2\pm 2lk(K_1,K_2)$. We choose a band connect sum so that we have framing $f_1+f_2-|2lk(K_1,K_2)|$ which is guaranteed to be negative since $N_j$ and $U_{n}$ initially have zero and negative framings respectively. Every smooth knot has a Legendrian representative with bounded max-tb, so for an arbitrarily negative framing each $2$-handle has a Legendrian representative with framing $tb-1.$ So, every $2$-handle now has a Legendrian representative. Thus, by the topological characterization of Weinstein $4$-manifolds we can conclude that $W_2\cup_{i=1}^{n} U_i$ admits a Weinstein structure.

Since $W_1\cup_{i=1}^{n}U_i$ does not admit a Weinstein structure and $W_2\cup_{i=1}^{n}U_i$ does, we have arrived at a contradiction. Therefore, $L_1, L_2$ are not smoothly isotopic.
\end{proof}

\begin{remark}
In comparison to the argument in Li-Tange~\cite{Li_Tange_2020}, we introduce an additional $(-2)$-framed $2$-handle $U_n$ to avoid the need for explicit computations and to ensure that we can arbitrarily lower the framing on the $2$-handles as needed.
\end{remark}

We apply Theorem~\ref{intro:prop:notsmooth} to prove Theorem~\ref{thm:diff}. In order to do so, we first associate exact Lagrangian fillings to rulings of the Legendrians $\La_k$.

\begin{definition}\label{defn:fillings_n}
For $1\leq i\leq k-1$ let $\rho_{i}$ denote the ruling of $\La_k$ with $i-1$ total layers with blue type switches. Next, let $L_i$ denote the embedded exact Lagrangian filling of  $\La_k$ given by the pinching sequence compatible with the ruling $\rho_i$ where there is a pinch move for every leftmost switch as shown in Figure~\ref{fig:pinch}.
\end{definition}

\begin{remark}
It is straightforward to show that $L_i$ induces the augmentation $\e_i$ that maps the Reeb chord associated to the switched crossing to $1$. For example, for $m(9_{46})$ there are two disk fillings $L_1$ and $L_2$ corresponding to the normal rulings with red and blue switches. $L_1$ is the exact Lagrangian filling induced by the augmentation where $\e(b_1)=1$.
\end{remark}

\begin{figure}[ht!]
    \centering
\includegraphics[width=0.1\linewidth]{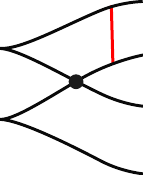}
    \caption{Pinch moves compatible with a ruling.}
    \label{fig:pinch}
\end{figure}

\begin{proposition}\label{prop:lan_not_smooth}
The exact disk Lagrangian fillings $L_i$ and $L_j$ of $\La_{k}$ are not smoothly isotopic for $i\neq j$.
\end{proposition}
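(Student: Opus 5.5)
The plan is to apply Theorem~\ref{intro:prop:notsmooth} directly. For each pair $i<j$ we exhibit comparable Kirby diagrams of the exteriors $W_i, W_j$ of $L_i, L_j$, together with a $U$-compatible system. The fillings $L_i$ are embedded, so Lagrangian surgery plays no role. Each $L_i$ is a ribbon disk: it is obtained by reading the pinching sequence of Definition~\ref{defn:fillings_n} backwards, with one band move per leftmost switch of $\rho_i$, which leaves an unlink.

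The exterior $W_i$ therefore has a Kirby diagram of the form of Figures~\ref{fig:ribbon_exterior}: $n$ carved $1$-handles, one for each unknot component remaining after the pinches, and $n-1$ $0$-framed $2$-handles, one for each band. The number $n$ of unlink components is the number of ruling disks of $\rho_i$ (here $\chi=1$), and it is the same for every $i$. Since $L_i$ is a disk we have $n>1$ as soon as there is at least one band, and the Figure~\ref{fig:U_compatible} example for $\La_3$ illustrates this.

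\emph{Step 1 (comparable diagrams).} Layer by layer, $\rho_i$ and $\rho_j$ differ only in the layers $i,\dots,j-1$, which carry blue switches in $\rho_j$ and red switches in $\rho_i$. All other layers, and the rest of the front away from the switch crossings, carry identical ruling disks. Take $S$ to be a union of small neighborhoods of the switch regions, which contain all the bands and hence all $0$-framed $2$-handles. Take $T$ to be the complement. Outside $S$ the unlink components coincide as sets, because the ruling disks agree away from switches. This verifies conditions (1)--(3) of Definition~\ref{defn:comparable}, and it is a direct generalization of the picture in Figure~\ref{fig:U_compatible}.

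\emph{Step 2 (the $U$-system).} For $W_j$, order the carved $1$-handles along the front, for instance following how the ruling disks of $\rho_j$ nest from top to bottom. Inside $T$, choose small meridional-type unknots $U_1,\dots,U_{n-1}$, each clasping two consecutive carved handles $H^j_{i_m}, H^j_{i_{m+1}}$ with linking number $\pm1$, and choose $U_n$ as a meridian of $H^j_{i_1}$. This gives conditions (2) and (3). For condition (1), we choose one of these unknots, say $U_1$, positioned near a switch region where $\rho_i$ has a red band but $\rho_j$ does not. In $W_i$ the two arcs it clasps belong to a single carved handle, since the red band of $\rho_i$ merges them. So $U_1$ has linking number $0$ with that handle and can be isotoped next to the $0$-framed $2$-handle of that band, as in Figure~\ref{fig:local_1}. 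We need $\rho_i\neq\rho_j$ to get such a region, which is where $i\ne j$ is used. We also need $K^i$ to contain at least one unknotted $0$-framed $2$-handle, which holds because each band handle is a small unknot.

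\emph{Main obstacle.} The hard step is the simultaneous check in Step 2. The same unknots, placed in $T$, must link the carved handles of $W_j$ in a linear chain, while in $W_i$ at least one of them must become unlinked from the carved handles and adjacent to a band $2$-handle. This requires tracking exactly how the carved handles of the two diagrams differ inside the stacked $m(9_{46})$ layers. I would first treat $k=3$, which is exactly Figure~\ref{fig:U_compatible}. Then I would argue by induction on the number of layers: stacking an additional layer (the stabilization $S$) adds one carved handle and one band to both diagrams in identical position, and we add one extra unknot to extend the chain. Once this is done, Theorem~\ref{intro:prop:notsmooth} gives that $L_i$ and $L_j$ are not smoothly isotopic rel boundary.
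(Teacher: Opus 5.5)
Your framework is the paper's: apply Theorem~\ref{intro:prop:notsmooth}, taking $S$ to be a neighborhood of the pinch locations of both fillings. There are a few slips in the setup:
\begin{itemize}
\item The unlinks agree on $T$ because both come from the same front by resolving crossings inside $S$, not because the ruling disks agree.
\item There are $k$ carved $1$-handles, one more than the $k-1$ pinches. This is not the number $2k-1$ of ruling disks: each layer carries a leftmost and a rightmost switch, and only the leftmost is pinched.
\item The blue layers are the \emph{bottom} ones, since a blue layer forces blue below it.
\end{itemize}

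The genuine gap is Step~2, which is the whole content of the proposition and which you defer as the ``main obstacle.'' You posit unknots in $T$ clasping consecutive carved handles of $W_j$, and separately place one of them next to a red band of $\rho_i$. You never check that a single curve can do both. A loop beside the pinch of $\rho_i$ can be arranged to meet one handle of $W_i$ with linking number $0$. But nothing you say ensures that in $W_j$ its two strands lie on two \emph{distinct} carved handles with linking number $\pm1$ each. Nor does anything ensure that the whole family makes the linking graph in $W_j$ a path.

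The induction does not repair this. Stabilization adds the new layer at the bottom. The ruling $\rho_1$ (all red) and any $\rho_j$ with $j\geq 2$ differ exactly in that bottom layer. So the new band is not ``in identical position'' in the two diagrams, and the step fails for those pairs.

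The missing idea is to build the system from $\rho_j$ and only then move it in $W_i$:
\begin{itemize}
\item The paper puts one unknot in each layer at the \emph{rightmost} (unpinched) switch of $\rho_j$. It takes $U_B$ if that switch is blue and $U_R$ if it is red, as in Figure~\ref{fig:compatible}.
\item Either choice meets exactly two distinct carved handles of $W_j$. This gives $U_1,\dots,U_{k-1}$.
\item $U_k$ is chosen to link only the topmost carved handle, in both diagrams.
\item Let $k_1$ be the top layer where $\rho_j$ is blue and $\rho_i$ is red. There, the chosen $U_B$ meets a single carved handle of $W_i$ with linking number $0$.
\item An isotopy pushes this $U_B$ to the leftmost (pinched) red switch of $\rho_i$, producing the configuration of Figure~\ref{fig:local_1}.
\end{itemize}
This explicit, ruling-dependent choice treats every pair $i\neq j$ at once and needs no induction on $k$.
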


\begin{proof}
By construction, the exteriors of $L_i, L_j$ have comparable Kirby diagrams. By Theorem~\ref{intro:prop:notsmooth} it suffices to show that the exteriors $W_i$ and $W_j$ are U-compatible for $i\neq j$. The fillings $L_i$ and $L_j$ are constructed with $k-1$ pinch moves and $k$ unknots, so that their exteriors have $k\geq 1$ carved 1-handles. By definition, every filling $L_i$ corresponds to a normal ruling $\rho_i$ of $\La_{k}$ where $1\leq i\leq k-1$.  Without loss of generality, assume that $\rho_i$ has more red type switches than $\rho_j.$ Recall that the layers of $\La_{k}$ are labeled $1,\ldots, k-1$ from  top to bottom. Let $1\leq k_1<k-1$ denote the top layer of $\La_{k}$ where $\rho_j$ has blue type switches (and where, by assumption $\rho_i$ has red type switches).

\begin{figure}[!ht]
    \centering
 \begin{tikzpicture}[scale=1.5]
\node[inner sep=0] at (0,0){\includegraphics[width=0.8\textwidth]{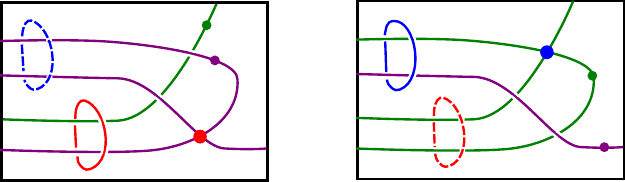}};
\node at (-3.3,0.9) {\textcolor{blue}{$U_B$}};
\node at (-2.6,-0.1) {\textcolor{red}{$U_R$}};
\node at (1.5,0.9) {\textcolor{blue}{$U_B$}};
\node at (2.2,-0.1) {\textcolor{red}{$U_R$}};
\end{tikzpicture}
    \caption{Possible Unknots $U_B,U_R$ for a U-compatible system on exteriors of $L_i$. The two distinct carved $1$-handles depend on the normal ruling.}
    \label{fig:compatible}
\end{figure}

 We now describe how to choose unknots $U$ that form a $U$-compatible system on $W_i,W_j$. Decorate the Kirby diagrams of the exteriors $W_i, W_j$ with the rulings $\rho_i, \rho_j.$ Focus on the decorated Kirby diagram of $W_j$, and consider only the rightmost switches. Near each rightmost switch of $\rho_j$ there are two candidate unknots: $U_B$ and $U_R$ as shown in Figure~\ref{fig:compatible}. Add the unknot $U_B$ to the $U$-compatible system set if the switch is of blue type, and otherwise add the unknot $U_R$. Both $U_B$ and $U_R$ intersect only two distinct carved $1$-handles in $W_j$. There are $k-1$ rightmost switches, so we have added $k-1$ unknots $U_1, \ldots, U_{k-1}$. We add the final unknot $U_{k}$ to our $U$-compatible system by choosing an unknot with linking number 1 with only the topmost carved $1$-handle in both exteriors. Observe that for $W_i$ the $k_1$-th switch of $\rho_i$ is of red type and differs from that of $\rho_j$ which is of blue type.
 The unknot we chose for that layer was $U_B$, and it intersects one carved $1$-handle with linking number $0$ in $W_i$. It is straightforward to verify that there is an isotopy pushing this unknot $U_B$ towards the leftmost red switch of $\rho_i$ so that it has the same configuration as the one shown in Figure~\ref{fig:local_1}. We then have a $U$-compatible system of unknots for comparable Kirby diagrams of $W_i,W_j.$
\end{proof}

\begin{remark}
    The fillings $L_i, L_j$ of $\La_k$ need not be topologically isotopic. However, some pairs have exteriors that are related by a diffeomorphism given by an involution and are therefore topologically isotopic. In particular, for $\La_k$ there are at least $\lfloor \frac{k}{2}\rfloor$ such pairs.
\end{remark}

We end this section by proving Theorem~\ref{thm:diff}, and Theorem~\ref{thm:notsmooth}

\begin{proof}[Proof of Theorem~\ref{thm:diff}]
Recall that $\La^g$ is the checkerboard Legendrian whose front is shown in Figure~\ref{fig:checkerboard}.
If $\chi(L)=1-2g$, for $g\geq 0$, consider the disjoint union $\La_k\sqcup\La^g$ where $\La_k$ is to the left of $\La^g$ (if you want a connected link, then take a connect sum $\La_k\#\La^g$ joining the rightmost cusp of $\La_k$ to the leftmost cusp of $\La^g)$. Note that $\La^g$ has a unique graded ruling corresponding to a filling $L^g$ of genus $g$, while $\La_k$ has at least $k$ distinct disk fillings $L_i$, $1\leq i\leq k$. We consider the two genus $g$ fillings of $\La_k\#\La^g$ given by $L_i\#L^g$ and $L_j\#L^g$  for $i\neq j$.
The exteriors of both fillings have comparable Kirby diagrams and we can construct a $U$-compatible system on the exteriors that is identical on the exteriors of $\La^g$, and so that for the exteriors of $L_i, L_j$ it is the $U$-compatible system from the proof of Proposition~\ref{prop:lan_not_smooth}. Then by Theorem~\ref{intro:prop:notsmooth} $L_i\#L^g$ and $L_j\#L^g$ are smoothly non-isotopic. For $\chi(L)=-2g$ for $g\geq 0$, we consider $\La_k\#\La^g\#\La_{m(3_1)}$. An analogous argument to the case of orientable fillings allows us to conclude that $\La_k\#\La^g\#\La_{m(3_1)}$ has $k$ distinct fillings with $\chi(L)=-2g$ that are pairwise smoothly non-isotopic. 
\end{proof}

\begin{proof}[Proof of Theorem~\ref{thm:notsmooth}]
Consider first $P(z^2)=kz^{2n}+1$. Then, we construct $k$ immersed fillings with $n$ double points and one disk filling. The disk filling of $\La_{kz^{2n}+1}$ is constructed by the unique pinching sequence on the subfronts $\La_{k+1}$
and $m(9_{46})$ defined by the normal ruling which results in an unlink that we fill in with disks. The $k$ immersed fillings are constructed by first pinching the subfront $\La_{k+1}$ in $\La_{kz^{2n}+1}$. Next pinch the $n$ subfronts $m(9_{46})$, following the normal rulings. Then, one obtains an ungraded $T(-2,n)$ torus link which we fill with an immersed disk filling with $n$ double points of index $+1$. Any pair of the $k$ immersed fillings differ only in the $\La_{k+1}$ subfront, therefore one can construct a $U$-compatible system on Kirby diagrams of the exteriors of the fillings following the proof of Proposition~\ref{prop:lan_not_smooth}. By Theorem~\ref{intro:prop:notsmooth} these pairs of fillings are not smoothly isotopic.

It suffices to end by considering the case of $P(z^2)=a_{n_1}z^{2n_1}+a_{n_2}z^{2n_2}+1$ for $n_1,n_2\geq 0$, where $\La_{P(z^2)}:=\La_{a_{n_1}z^{2n_1}+1} \ \natural \  \La_{a_{n_2}z^{2n_2}+1}$. Then, as discussed in Proposition~\ref{prop:together}, there are three families of rulings corresponding to $a_{n_1}$ immersed fillings with $n_1$ double points, $a_{n_2}$ immersed fillings with $n_2$ double points, and a disk filling. The $a_{n_1}$ immersed fillings are constructed by different pinching sequences on the $\La_{a_{n_1+1}}$ subfront of the $\La_{a_{n_1}z^{2n_1}+1}$ factor. For the $\La_{a_{n_2}z^{2n_2}+1}$ factor, the pinching sequence compatible with the ruling yields an unlink which we fill with disks. Thus, it is straightforward to construct a $U$-compatible system on any pair of the exteriors of the $a_{n_1}$ immersed fillings following the proof of Proposition~\ref{prop:lan_not_smooth}. By Theorem~\ref{intro:prop:notsmooth} these pairs of fillings are not smoothly isotopic. The same argument holds for pairs of $a_{n_2}$ immersed fillings.

\end{proof}

\bibliographystyle{alpha}
\bibliography{OP}

@article{Cao_Gallup_Hayden_Sabloff_2014, title={Topologically distinct {L}agrangian and symplectic fillings}, volume={21}, DOI={10.4310/mrl.2014.v21.n1.a7}, number={1}, journal={Mathematical Research Letters}, author={Cao, Chang and Gallup, Nathaniel and Hayden, Kyle and Sabloff, Joshua M.}, year={2014}, pages={85–99}}

@article{FI,
	author = {Fuchs, Dmitry and Ishkhanov, Tigran},
	doi = {10.17323/1609-4514-2004-4-3-707-717},
	fjournal = {Moscow Mathematical Journal},
	issn = {1609-3321},
	journal = {Moscow Mathematical Journal},
	mrclass = {57R17 (53D40 57M27)},
	mrnumber = {2119145},
	mrreviewer = {Lenhard L. Ng},
	number = {3},
	pages = {707--717, 783},
	title = {Invariants of {L}egendrian knots and decompositions of front diagrams},
	url = {https://doi.org/10.17323/1609-4514-2004-4-3-707-717},
	volume = {4},
	year = {2004}}

@article{Ng_2003, title={Computable {L}egendrian invariants}, volume={42}, DOI={10.1016/s0040-9383(02)00010-1}, number={1}, journal={Topology}, author={Ng, Lenhard L.}, year={2003},  pages={55–82}}

@article{NRSS,
	author = {Ng, Lenhard and Rutherford, Dan and Shende, Vivek and Sivek, Steven},
	doi = {10.4310/MRL.2017.v24.n6.a14},
	fjournal = {Mathematical Research Letters},
	issn = {1073-2780},
	journal = {Mathematical Research Letters},
	mrclass = {57M25 (18F99 57R17)},
	mrnumber = {3762698},
	mrreviewer = {Tye Lidman},
	number = {6},
	pages = {1845--1874},
	title = {The cardinality of the augmentation category of a {L}egendrian link},
	url = {https://doi.org/10.4310/MRL.2017.v24.n6.a14},
	volume = {24},
	year = {2017}}

@article {CN,
       AUTHOR = {Casals, Roger and Ng, Lenhard},
     TITLE = {Braid loops with infinite monodromy on the {L}egendrian
              contact {DGA}},
   JOURNAL = {Journal of Topology},
  FJOURNAL = {Journal of Topology},
    VOLUME = {15},
      YEAR = {2022},
    NUMBER = {4},
     PAGES = {1927--2016},
      ISSN = {1753-8416,1753-8424},
   MRCLASS = {53D10 (57K10 57K33)},
  MRNUMBER = {4584583},
MRREVIEWER = {Yu\ Pan},
       DOI = {10.1112/topo.12264},
       URL = {https://cfcix1b13095ec5284139s5ox9c9wbxxuq6xnxfayc.eds.tju.edu.cn/10.1112/topo.12264},
}

@article{CG,
	author = {Casals, Roger and Gao, Honghao},
	doi = {10.4007/annals.2022.195.1.3},
	journal = {Annals of Mathematics},
	number = {1},
	title = {Infinitely many {L}agrangian fillings},
	volume = {195},
	year = {2022}}

@article{Gao2, title={Augmentations, fillings, and clusters}, volume={34}, DOI={10.1007/s00039-024-00673-y}, number={3}, journal={Geometric and Functional Analysis}, author={Gao, Honghao and Shen, Linhui and Weng, Daping}, year={2024},  pages={798–867}}

@article {CGGS,
    AUTHOR = {Casals, Roger and Gorsky, Eugene and Gorsky, Mikhail and
              Simental, Jos\'e},
     TITLE = {Algebraic weaves and braid varieties},
   JOURNAL = {American Journal of Mathematics},
  FJOURNAL = {American Journal of Mathematics},
    VOLUME = {146},
      YEAR = {2024},
    NUMBER = {6},
     PAGES = {1469--1576},
      ISSN = {0002-9327,1080-6377},
   MRCLASS = {14R25 (57K10 57K33)},
  MRNUMBER = {4855860},
}

@article {CGGILS,
    AUTHOR = {Casals, Roger and Gorsky, Eugene and Gorsky, Mikhail and Le,
              Ian and Shen, Linhui and Simental, Jos\'e},
     TITLE = {Cluster structures on braid varieties},
   JOURNAL = {Journal of the American Mathematical Society},
  FJOURNAL = {Journal of the American Mathematical Society},
    VOLUME = {38},
      YEAR = {2025},
    NUMBER = {2},
     PAGES = {369--479},
      ISSN = {0894-0347,1088-6834},
   MRCLASS = {13F60 (14M15 20F36)},
  MRNUMBER = {4868947},
       DOI = {10.1090/jams/1048},
       URL = {https://cfcix1b13095ec5284139s5ox9c9wbxxuq6xnxfayc.eds.tju.edu.cn/10.1090/jams/1048},
}

@article {Ru,
    AUTHOR = {Rutherford, Dan},
     TITLE = {Thurston-{B}ennequin number, {K}auffman polynomial, and ruling
              invariants of a {L}egendrian link: the {F}uchs conjecture and
              beyond},
   JOURNAL = {International Mathematics Research Notices},
  FJOURNAL = {International Mathematics Research Notices},
      YEAR = {2006},
     PAGES = {Art. ID 78591, 15},
      ISSN = {1073-7928,1687-0247},
   MRCLASS = {57M27 (57R17)},
  MRNUMBER = {2219227},
MRREVIEWER = {Justin\ Sawon},
       DOI = {10.1155/IMRN/2006/78591},
       URL = {https://cfcix1b13095ec5284139sfvvkfbf0fwwc69qxfayc.eds.tju.edu.cn/10.1155/IMRN/2006/78591},
}

@article{Etnyre_Ng_Vertesi_2013,
title={Legendrian and {t}ransverse twist knots},
volume={15},
DOI={10.4171/jems/383},
number={3},
journal={Journal of the European Mathematical Society},
author={Etnyre, John B. and Ng, Lenhard L. and Vértesi, Vera}, year={2013},
pages={969–995}}

@article{Gao_Rutherford_2021, title={Non-fillable augmentations of {T}wist Knots}, 
volume={2023}, 
DOI={10.1093/imrn/rnab224},
number={2},
journal={International Mathematics Research Notices},
author={Gao, Honghao and Rutherford, Dan},
year={2021},
pages={1255–1291}}

@article{HR,
	author = {Henry, Michael B. and Rutherford, Dan},
	doi = {10.1112/jtopol/jtu013},
	fjournal = {Journal of Topology},
	issn = {1753-8416},
	journal = {Journal of Topology},
	mrclass = {57R17 (53D42 57M27)},
	mrnumber = {3335247},
	mrreviewer = {Steven Sivek},
	number = {1},
	pages = {1--37},
	title = {Ruling polynomials and augmentations over finite fields},
	url = {https://doi.org/10.1112/jtopol/jtu013},
	volume = {8},
	year = {2015}}

@article{Baykur_Horn-Morris_2016, title={Fillings of genus–1 open books and 4–braids}, DOI={10.1093/imrn/rnw281}, journal={International Mathematics Research Notices}, author={Baykur, R. İnanç and Horn-Morris, Jeremy Van}, year={2016}}

@article{Boileau_Orevkov_2001, title={Quasi-positivité d’une courbe analytique dans une boule pseudo-convexe}, volume={332}, DOI={10.1016/s0764-4442(01)01945-0}, number={9}, journal={Comptes Rendus de l’Académie des Sciences - Series I - Mathematics}, author={Boileau, Michel and Orevkov, Stepan}, year={2001},  pages={825–830}}

@article {Sab,
    AUTHOR = {Sabloff, Joshua M.},
     TITLE = {Augmentations and rulings of {L}egendrian knots},
   JOURNAL = {International Mathematics Research Notices},
  FJOURNAL = {International Mathematics Research Notices},
      YEAR = {2005},
    NUMBER = {19},
     PAGES = {1157--1180},
      ISSN = {1073-7928,1687-0247},
   MRCLASS = {57M25 (53D40 57R17)},
  MRNUMBER = {2147057},
MRREVIEWER = {Quach thi C\^am V\^an},
       DOI = {10.1155/IMRN.2005.1157},
       URL = {https://cfcix1b13095ec5284139sfvvkfbf0fwwc69qxfayc.eds.tju.edu.cn/10.1155/IMRN.2005.1157},
}

@article{Pan_Rutherford_2023, title={Augmentations and immersed {L}agrangian fillings}, volume={16}, DOI={10.1112/topo.12280}, number={1}, journal={Journal of Topology}, author={Pan, Yu and Rutherford, Dan}, year={2023}, pages={368–429}}

@article{Pan_Rutherford_2021, title={Functorial lch for immersed {L}agrangian cobordisms}, volume={19}, DOI={10.4310/jsg.2021.v19.n3.a5}, number={3}, journal={Journal of Symplectic Geometry}, author={Pan, Yu and Rutherford, Dan}, year={2021}, pages={635–722}}

@article{Oba_2020, title={Surfaces in the $4$-disk with the same boundary and Fundamental Group}, volume={27}, DOI={10.4310/mrl.2020.v27.n1.a13}, number={1}, journal={Mathematical Research Letters}, author={Oba, Takahiro}, year={2020}, pages={265–279}}

@article{Li_Tange_2020, title={Smoothly non-isotopic {L}agrangian disk fillings of {L}egendrian knots}, volume={213}, DOI={10.1007/s10711-020-00575-x}, number={1}, journal={Geometriae Dedicata}, author={Li, Youlin and Tange, Motoo}, year={2020}, pages={211–225}}

@article{polterovich, title={The surgery of {L}agrange submanifolds}, volume={1}, DOI={10.1007/bf01896378}, number={2}, journal={Geometric and Functional Analysis}, author={Polterovich, Leonid}, year={1991}, pages={198–210}}

@article{eliashberg, 
title={Topological Characterization Of {S}tein Manifolds Of Dimension $>2$}, 
volume={01}, 
DOI={10.1142/s0129167x90000034}, 
number={01}, 
journal={International Journal of Mathematics}, 
author={Eliashberg, Yakov}, 
year={1990},
 pages={29–46}}

@book{gompf19994,
  title={4-manifolds and {K}irby calculus},
  author={Gompf, Robert E and Stipsicz, Andr{\'a}s I},
  volume={20},
  year={1999},
  publisher={American Mathematical Society},
  address={Providence, R.I.},
  series={Graduate Studies in Mathematics},
  isbn={978-0821809945}
}

@article{Lisca_Matic_1997, title={Tight contact structures and Seiberg-Witten invariants}, volume={129}, DOI={10.1007/s002220050171}, number={3}, journal={Inventiones Mathematicae}, author={Lisca, P. and Matić, G.}, year={1997},  pages={509–525}}

@article{ChNg,
	author = {Chongchitmate, Wutichai and Ng, Lenhard},
	doi = {10.1080/10586458.2013.750221},
	fjournal = {Experimental Mathematics},
	issn = {1058-6458},
	journal = {Exp. Math.},
	mrclass = {57M25 (53Cxx 57R17)},
	mrnumber = {3038780},
	number = {1},
	pages = {26--37},
	title = {An atlas of {L}egendrian knots},
	url = {https://doi.org/10.1080/10586458.2013.750221},
	volume = {22},
	year = {2013}}

@article{JCHLLW,
    author = {Asplund, Johan and Capovilla-Searle, Orsola and Hughes,James and Leverson, Caitlin and  Li, Wenyuan and Wu, Angela},
    title = {Decompositions of augmentation varieties via weaves and rulings},
    journal = {arXiv preprint, arXiv:2508.20226 },
    year = {2025}}

@article{Hayden_Kim_Miller_Park_Sundberg_2025, title={Seifert surfaces in the 4-ball}, DOI={10.4171/jems/1703}, journal={Journal of the European Mathematical Society}, author={Hayden, Kyle and Kim, Seungwon and Miller, Maggie and Park, JungHwan and Sundberg, Isaac}, year={2025}}

@article{Akbulut_1991, title={A solution to a conjecture of {Z}eeman}, volume={30}, DOI={10.1016/0040-9383(91)90028-3}, number={3}, journal={Topology}, author={Akbulut, S.}, year={1991}, pages={513–515}}

@article{Leverson_2016, title={Augmentations and rulings of {L}egendrian knots}, volume={14}, DOI={10.4310/jsg.2016.v14.n4.a5}, number={4}, journal={Journal of Symplectic Geometry}, author={Leverson, Caitlin}, year={2016}, pages={1089–1143}}

@article {CLLMPT,
    AUTHOR = {Capovilla-Searle, Orsola and Legout, No\'emie and Limouzineau,
              Ma\"ylis and Murphy, Emmy and Pan, Yu and Traynor, Lisa},
     TITLE = {Obstructions to reversing {L}agrangian surgery in {L}agrangian
              fillings},
   JOURNAL = {Journal of Symplectic Geometry},
  FJOURNAL = {The Journal of Symplectic Geometry},
    VOLUME = {22},
      YEAR = {2024},
    NUMBER = {3},
     PAGES = {599--672},
      ISSN = {1527-5256,1540-2347},
   MRCLASS = {57K43 (53D12 57K33)},
  MRNUMBER = {4819508},
DOI = {10.4310/jsg.241001214710},
       URL = {https://cfcix1b13095ec5284139s9npvqvfvwk0u6kovfayc.eds.tju.edu.cn/10.4310/jsg.241001214710},
}

@article{CSHW,
title={Augmentations, fillings and clusters for 2-bridge links},
author={Capovilla-Searle, Orsola and Hughes, James and Weng, Daping},
year={2025},
archivePrefix={arXiv:2308.11858},
primaryClass={math.SG},
journal={Selecta Mathematica},
volume={31},
number={102},
}

@article{Casals_weng, 
title={Microlocal theory of {L}egendrian links and cluster algebras},
volume={28}, 
DOI={10.2140/gt.2024.28.901}, 
number={2}, 
journal={Geometry $\&$ Topology}, 
author={Casals, Roger and Weng, Daping}, 
year={2024}, 
pages={901–1000}}

@article{Casals_zaslow, 
 title={{L}egendrian weaves: {N}–Graph Calculus, flag moduli and applications}, 
 volume={26}, 
 DOI={10.2140/gt.2022.26.3589}, 
 number={8}, 
 journal={Geometry $\&$ Topology}, 
 author={Casals, Roger and Zaslow, Eric}, 
 year={2022}, 
 pages={3589–3745}}

@article{eliashberg_polterovich, 
title={Local {L}agrangian 2-Knots are Trivial}, 
volume={144}, DOI={10.2307/2118583}, 
number={1}, journal={The Annals of Mathematics},
 author={Eliashberg, Yakov and Polterovich, Leonid}, 
year={1996}, pages={61},
}

@article {STWZ,
	AUTHOR = {Shende, Vivek and Treumann, David and Williams, Harold and
	Zaslow, Eric},
	TITLE = {Cluster varieties from {L}egendrian knots},
	JOURNAL = {Duke Math. J.},
	FJOURNAL = {Duke Mathematical Journal},
	VOLUME = {168},
	YEAR = {2019},
	NUMBER = {15},
	PAGES = {2801--2871},
	ISSN = {0012-7094},
	MRCLASS = {53D12 (05E99 32S60 53D30)},
	MRNUMBER = {4017516},
	DOI = {10.1215/00127094-2019-0027},
	URL = {https://doi.org/10.1215/00127094-2019-0027},
}

@article{Arnold_1990, 
title={Singularities of caustics and wave fronts}, 
DOI={10.1007/978-94-011-3330-2}, 
journal={Mathematics and Its Applications}, 
author={Arnold, V. I.}, year={1990}}

@book{Geiges_2008,
place={Cambridge},
title={An introduction to contact topology},
publisher={Cambridge University Press},
author={Geiges, Hansj\"org}, year={2008}}

@article{EN_survey,
author={Etnyre, John B. and Ng, Lenhard L.},
title={Legendrian contact homology in $\mathbb{R}^3$.},
journal={In Surveys in differential
geometry 2020. Surveys in 3-manifold topology and geometry},
volume={25},
pages={103–161.},
publisher={Int. Press, Boston, MA,},
year={2022}}

@article {Fuchs03,
    AUTHOR = {Fuchs, Dmitry},
     TITLE = {Chekanov--{E}liashberg invariant of {L}egendrian knots:
              existence of augmentations},
   JOURNAL = {Journal of Geometry and Physics},
  FJOURNAL = {Journal of Geometry and Physics},
    VOLUME = {47},
      YEAR = {2003},
    NUMBER = {1},
     PAGES = {43--65},
      ISSN = {0393-0440,1879-1662},
   MRCLASS = {57M25 (57M50 57R17)},
  MRNUMBER = {1985483},
MRREVIEWER = {Vladimir\ V.\ Tchernov},
       DOI = {10.1016/S0393-0440(01)00013-4},
       URL = {https://doi.org/10.1016/S0393-0440(01)00013-4},
}

@article {Chekanov,
    AUTHOR = {Chekanov, Yuri},
     TITLE = {Differential algebra of {L}egendrian links},
   JOURNAL = {Inventiones Mathematicae},
  FJOURNAL = {Inventiones Mathematicae},
    VOLUME = {150},
      YEAR = {2002},
    NUMBER = {3},
     PAGES = {441--483},
      ISSN = {0020-9910},
   MRCLASS = {53D35 (57M27 57R17)},
  MRNUMBER = {1946550},
MRREVIEWER = {John B. Etnyre},
       DOI = {10.1007/s002220200212},
       URL = {https://doi.org/10.1007/s002220200212},
}

@article{SFT, title={Introduction to {S}ymplectic {F}ield {T}heory},
DOI={10.1007/978-3-0346-0425-3_4},
journal={Visions in Mathematics},
author={Eliashberg, Yakov and Givental, Alexander and Hofer, Helmut},
year={2000},
pages={560–673}}

@article {ChekanovPushkar,
    AUTHOR = {Pushkar , P. E. and Chekanov, Yu. V.},
     TITLE = {Combinatorics of fronts of {L}egendrian links, and
              {A}rnold's 4-conjectures},
   JOURNAL = {Uspekhi Matematicheskikh Nauk},
  FJOURNAL = {Uspekhi Matematicheskikh Nauk},
    VOLUME = {60},
      YEAR = {2005},
    NUMBER = {1(361)},
     PAGES = {99--154},
      ISSN = {0042-1316,2305-2872},
   MRCLASS = {58K15 (53D10 53D12 57M25)},
  MRNUMBER = {2145660},
MRREVIEWER = {John\ B.\ Etnyre},
       DOI = {10.1070/RM2005v060n01ABEH000808},
       URL = {https://doi.org/10.1070/RM2005v060n01ABEH000808},
}

@article{survey_wiscon, title={Constructions of {L}agrangian cobordisms}, DOI={10.1007/978-3-030-80979-9_5}, journal={Association for Women in Mathematics Series}, author={Blackwell, Sarah and Legout, Noémie and Leverson, Caitlin and Limouzineau, Maÿlis and Myer, Ziva and Pan, Yu and Pezzimenti, Samantha and Suárez, Lara Simone and Traynor, Lisa}, year={2021}, pages={245–272}}

@article{Pan_2017, title={Exact {L}agrangian fillings of {L}egendrian $(2,n)$ torus links}, volume={289}, DOI={10.2140/pjm.2017.289.417}, number={2}, journal={Pacific Journal of Mathematics}, author={Pan, Yu}, year={2017},  pages={417–441}}

@article{Chantraine_2010, title={Lagrangian concordance of {L}egendrian knots}, volume={10}, DOI={10.2140/agt.2010.10.63}, number={1}, journal={Algebraic $\&$ Geometric Topology}, author={Chantraine, Baptiste}, year={2010},  pages={63–85}}

@article{Hayden_Sundberg_2024, title={Khovanov homology and exotic surfaces in the 4-ball}, volume={0}, DOI={10.1515/crelle-2024-0001}, number={0}, journal={Journal für die reine und angewandte Mathematik (Crelles Journal)}, author={Hayden, Kyle and Sundberg, Isaac}, year={2024}}

@article{Juhasz_Miller_Zemke_2021, title={Transverse invariants and exotic surfaces in the 4–ball}, volume={25}, DOI={10.2140/gt.2021.25.2963}, number={6}, journal={Geometry $\&$ Topology}, author={Juhász, András and Miller, Maggie and Zemke, Ian}, year={2021}, pages={2963–3012}}

@article{Hamming_Gao,
title={Isomorphism in the augmentation category},
author={Gao, Honghao and Liu, Hamming},
year={2026},
journal={arXiv preprint, 	arXiv:2602.12555}}

@article{Galashin_Lam_Sherman-Bennett_2025, title={Braid variety cluster structures, {I}{I}: General type}, volume={243}, DOI={10.1007/s00222-025-01390-5}, number={3}, journal={Inventiones mathematicae}, author={Galashin, Pavel and Lam, Thomas and Sherman-Bennett, Melissa}, year={2025}, pages={1079–1127}}

@article{Shende_Treumann_Zaslow_2016, title={{L}egendrian knots and constructible sheaves}, volume={207}, DOI={10.1007/s00222-016-0681-5}, number={3}, journal={Inventiones mathematicae}, author={Shende, Vivek and Treumann, David and Zaslow, Eric}, year={2016}, pages={1031–1133}}

@article{Traynor_2001, title={Generating function polynomials for {L}egendrian links}, volume={5}, DOI={10.2140/gt.2001.5.719}, number={2}, journal={Geometry $\&$ Topology}, author={Traynor, Lisa}, year={2001},  pages={719–760}}

@article{Fuchs_Rutherford_2011, title={Generating families and {L}egendrian contact homology in the standard contact space}, volume={4}, DOI={10.1112/jtopol/jtq033}, number={1}, journal={Journal of Topology}, author={Fuchs, Dmitry and Rutherford, Dan}, year={2011}, pages={190–226}}

@article{Juhasz_Zemke_2019, title={Distinguishing slice disks using knot {F}loer homology}, volume={26}, DOI={10.1007/s00029-019-0531-6}, number={1}, journal={Selecta Mathematica}, author={Juhász, András and Zemke, Ian}, year={2019}}

@article{EHK,
	Author = {Ekholm, Tobias and Honda, Ko and K{\'a}lm{\'a}n, Tam{\'a}s},
	Doi = {10.4171/JEMS/650},
	Fjournal = {Journal of the European Mathematical Society (JEMS)},
	Issn = {1435-9855},
	Journal = {Journal of the European Mathematical Society},
	Mrclass = {Preliminary Data},
	Mrnumber = {3562353},
	Number = {11},
	Pages = {2627--2689},
	Title = {Legendrian knots and exact {L}agrangian cobordisms},
	Url = {http://dx.doi.org/10.4171/JEMS/650},
	Volume = {18},
	Year = {2016}}

@article{SuXiYu,
	author = {Su, Tao and Xie, Baiting and Yu, Chenglong},
	journal = {arXiv preprint arXiv:2603.27888v1},
	title = {Log-concavity from enumerative geometry of planar curve singularities},
	year = {2026}}

@article{Hayden,
title={Exotically knotted disks and complex curves},
author={Kyle Hayden},
year={2021},
journal={arXiv preprint, arXiv:2003.13681 }}

@article{Ian,
title={Bar-Natan skein lasagna modules and exotic surfaces in 4-manifolds},
author={Ian Sullivan},
year={2025},
journal={arXiv preprint, arXiv:2504.03968}}

@article{MR26,
title={Undecidability problems for semifree DG algebras},
author={Manolescu, Ciprian and Rozenblyum, Nick},
journal={arXiv preprint, arXiv:2605.08122v1},
year={2026}
}

\end{document}